\documentclass[12pt]{article}
\usepackage{amsmath,amssymb,amsthm,amsfonts}

\usepackage{graphicx}
\vfuzz2pt 
\hfuzz2pt 
\newtheorem{thm}{Theorem}[section]
\newtheorem{cor}[thm]{Corollary}
\newtheorem{lem}[thm]{Lemma}
\newtheorem{prop}[thm]{Proposition}
\theoremstyle{definition}
\newtheorem{defn}[thm]{Definition}
\theoremstyle{remark}

\numberwithin{equation}{section}
\newcommand{\chglinespacing}[1]{\renewcommand{\baselinestretch}{#1}
\small\normalsize }

\newcommand{\chglinespacingmy}[1]{\renewcommand{\baselinestretch}{#1}
\small\normalsize }
\newcommand{\ftsection}[3][\normalsize]{{#1\section{#2}\label{#3}}}
\newcommand{\secsize}{\Huge}
\newcommand{\textdf}[1]{\textbf{#1}}

\newcommand{\SWALLOWCODE}[1]{}
\newcommand{\real}{{\mathbb R}}
\newcommand{\BM}{M} 
\newcommand{\ECP}{{\mathcal V}} 
\newcommand{\PD}{{\mathbb D}} 

\begin{document}

\title{Py-Calabi quasi-morphisms and quasi-states on orientable surfaces of higher genus \\
}

\renewcommand{\thefootnote}{\alph{footnote}}

\author{Maor Rosenberg}

\maketitle

\begin{abstract}
\noindent

We show that Py-Calabi quasi-morphism on the group of Hamiltonian
diffeomorphisms of surfaces of higher genus gives rise to
a quasi-state.

\end{abstract}


\chglinespacing{1.5}

\pagenumbering{roman}

\pagestyle{myheadings}
\chglinespacingmy{1.3}
\pagenumbering{arabic} 
\ftsection[\secsize]{Introduction}{sec:back}
In \cite{EP1} M. Entov and L. Polterovich establish an unexpected link between a group-theoretic notion of quasi-morphism,
which has been found useful in symplectic geometry, and a recently emerged branch of functional analysis called the theory of quasi-states and quasi-measures.
In this paper we show this connection for a recently discovered, due to P. Py \cite{Py}, Calabi quasi-morphism on orientable surfaces of higher genus.
The proof relies on hyperbolic geometry tools, surprisingly combined with combinatorial tools such as Hall's marriage theorem.

\subsection{The Group $\mbox{Ham}(\BM,\omega)$}
\begin{defn}\label{def:ham}
Let $\BM$ be a symplectic manifold equipped with a symplectic form $\omega$.
Let $F_t(x):=F(x,t)$, $F:\BM\times\real\rightarrow\real$ be a smooth function, called \textdf{Hamiltonian} function.
The pointwise linear equation $i_v w=-dF_t$ defines a vector field $v$ on $\BM$ denoted by $sgradF_t$.
The flow generated by the Hamiltonian vector field $sgradF_t$ is denoted by $f_t$.
By assuming that the union over $t$ of the supports of $F_t$ is contained in a compact subset of $\BM$,
we can guarantee that the above equation has a well defined solution for all $t$, and so $f_t$ is well defined.
The time-one map $f_1$, denoted by $\phi_F$, will be called \textdf{the Hamiltonian diffeomorphism generated by $F$}.
The collection of Hamiltonian diffeomorphisms has a group structure and this group is denoted by $\mbox{Ham}(\BM,\omega)$.
For further details see \cite{MS,P}.
\end{defn}

\subsection{Algebraic Results on $\mbox{Ham}(\BM,\omega)$}
The following algebraic results on $\mbox{Ham}(\BM,\omega)$ are due to Banyaga \cite{B}.

\begin{thm}\label{thm:banyaga1}
Let $M$ be a closed symplectic manifold, then $\mbox{Ham}(M,\omega)$ is simple, i.e., it has no non-trivial normal subgroup.
\end{thm}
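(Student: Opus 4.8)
The plan is to follow A.\ Banyaga's strategy, which splits the statement into two essentially independent parts: (a) that $\mathrm{Ham}(M,\omega)$ is \emph{perfect}, i.e.\ $\mathrm{Ham}(M,\omega)=[\mathrm{Ham}(M,\omega),\mathrm{Ham}(M,\omega)]$, and (b) an Epstein-style reduction showing that \emph{any} nontrivial normal subgroup of $\mathrm{Ham}(M,\omega)$ already contains the commutator subgroup. Granting both, a normal $N\ne\{\mathrm{id}\}$ satisfies $N\supseteq[\mathrm{Ham}(M,\omega),\mathrm{Ham}(M,\omega)]=\mathrm{Ham}(M,\omega)$, so $N=\mathrm{Ham}(M,\omega)$ and the group is simple. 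Part (b) uses only soft input — a fragmentation lemma, transitivity of $\mathrm{Ham}(M,\omega)$ on embedded symplectic balls, and an elementary commutator identity exploiting displaceability — whereas part (a) is where closedness of $M$ enters essentially: on an open ball the Calabi homomorphism obstructs perfectness, while on closed $M$ one may normalize generating Hamiltonians to have zero mean, and this is what makes perfectness possible.

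For (b): let $N\trianglelefteq\mathrm{Ham}(M,\omega)$ be normal with $N\ne\{\mathrm{id}\}$, pick $h\in N$, $h\ne\mathrm{id}$, and choose a Darboux ball $B$ with $h(B)\cap B=\emptyset$ (possible since $h$ moves some point). For $f,g\in\mathrm{Ham}(M,\omega)$ supported in $B$ put $a:=[f,h]=f\cdot(hf^{-1}h^{-1})$; this lies in $N$ by normality, the factor $hf^{-1}h^{-1}$ is supported in $h(B)$ which is disjoint from $B$, and one computes directly that $[g,a]=[g,f]$, so $[f,g]\in N$ for all $f,g$ supported in $B$. Now take arbitrary $\phi,\psi\in\mathrm{Ham}(M,\omega)$. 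By the fragmentation lemma write $\phi=\phi_1\cdots\phi_r$ and $\psi=\psi_1\cdots\psi_s$ with each factor supported in a Darboux ball of volume much less than $\mathrm{vol}(B)$; using the identities $[xy,z]=x[y,z]x^{-1}\cdot[x,z]$ and $[x,yz]=[x,y]\cdot y[x,z]y^{-1}$ one rewrites $[\phi,\psi]$ as a product of $\mathrm{Ham}(M,\omega)$-conjugates of the commutators $[\phi_i,\psi_j]$. Each $[\phi_i,\psi_j]$ is either trivial (disjoint supports) or, after refining the cover, supported in a single Darboux ball of volume $<\mathrm{vol}(B)$, which by transitivity some $\rho\in\mathrm{Ham}(M,\omega)$ maps into $B$; then $[\phi_i,\psi_j]\in\rho^{-1}N\rho=N$, hence so is each of its conjugates, hence $[\phi,\psi]\in N$. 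Thus $[\mathrm{Ham}(M,\omega),\mathrm{Ham}(M,\omega)]\subseteq N$, as required.

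For (a): every $\phi\in\mathrm{Ham}(M,\omega)$ is the time-one map of a Hamiltonian isotopy $\{\phi^H_t\}$ whose generator $H$ can be chosen uniquely so that $\int_M H_t\,\omega^n\equiv0$ (here $M$ closed and connected). One then shows — via Darboux charts, a subordinate partition of unity, and a chaining argument using connectedness of $M$ — that every mean-zero function on $M$ is a finite sum of Poisson brackets $\{g_i,h_i\}$. Infinitesimally $\phi^{\{g,h\}}$ corresponds to the commutator $[\phi^g,\phi^h]$, which makes plausible that $\phi$ is a product of commutators; converting this infinitesimal identity into an honest finite factorization $\phi=[u_1,v_1]\cdots[u_m,v_m]$ requires an iteration scheme that absorbs the error coming from $H\mapsto\phi^H_1$ failing to be a homomorphism. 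This bootstrap — rather than the displacement trick of part (b) — is the main obstacle. Both parts are classical; see Banyaga \cite{B}, as well as \cite{MS,P}.
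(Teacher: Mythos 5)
The paper states this theorem without proof, citing Banyaga~\cite{B}; it is treated as black-box input, so there is no in-paper argument to compare against. Your two-part strategy --- (a) perfectness of $\mbox{Ham}(M,\omega)$, and (b) the Epstein-style reduction that any nontrivial normal subgroup already contains the commutator subgroup --- is exactly the classical route, and your execution of (b) is essentially correct: the displacement computation $a=[f,h]=f\cdot(hf^{-1}h^{-1})$ with the second factor supported in $h(B)$ disjoint from $B$ does give $[g,a]=[g,f]\in N$, and fragmentation plus transitivity of $\mbox{Ham}(M,\omega)$ on small Darboux balls then forces $[\mbox{Ham},\mbox{Ham}]\subseteq N$. (You should say a word about why the union of two overlapping small balls, where $\mathrm{supp}\,\phi_i$ meets $\mathrm{supp}\,\psi_j$, still fits in a single Darboux ball of volume $<\mathrm{vol}(B)$, but that is routine for a fine enough fragmentation.) The genuine gap is in (a), and you in fact flag it yourself: the passage from ``every mean-zero function is a finite sum of Poisson brackets'' to an honest finite factorization $\phi=[u_1,v_1]\cdots[u_m,v_m]$ is the entire content of Banyaga's perfectness theorem, and it requires Thurston/Mather-type machinery rather than a generic ``iteration scheme that absorbs the error.'' As a faithful sketch of the cited result your proposal is sound; as a self-contained proof it is incomplete precisely at the step you identify as the main obstacle.
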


\begin{thm}\label{thm:banyaga2}
Let $M$ be an open manifold with an exact symplectic structure, $\omega=d\lambda$.
Then $\mbox{Ham}(M,\omega)$ admits the Calabi homomorphism:
\[ Cal_M:\mbox{Ham}(M,\omega)\rightarrow\real \]
defined as
\[ Cal_M(\phi_F)=\int_0^1\int_M F(x,t){\omega}^m dt, \]
whose kernel is equal to the commutator subgroup of $\mbox{Ham}(M,\omega)$.
Furthermore, this kernel is a simple group.
\end{thm}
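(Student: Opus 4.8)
The plan, following Banyaga \cite{B}, is to establish in order: (i) that $Cal_M(\phi_F)$ depends only on $\phi:=\phi_F$ and not on the Hamiltonian $F$; (ii) that $Cal_M$ is then a group homomorphism; (iii) that $\ker Cal_M$ equals the commutator subgroup $[\mbox{Ham}(M,\omega),\mbox{Ham}(M,\omega)]$; and (iv) that this kernel is simple. Throughout, $\dim M=2m$, all Hamiltonians have the union of their supports in a fixed compact set, and I use freely that a Hamiltonian flow $\phi^F_t$ preserves the volume form $\omega^m$.

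For (i) and (ii) the key input is this volume-preservation. The composition $\phi^F_t\circ\phi^G_t$ is the Hamiltonian isotopy generated by $(F\#G)_t:=F_t+G_t\circ(\phi^F_t)^{-1}$, and since $\phi^F_t$ preserves $\omega^m$,
\[
\int_M\bigl(G_t\circ(\phi^F_t)^{-1}\bigr)\,\omega^m=\int_M G_t\,(\phi^F_t)^{*}\omega^m=\int_M G_t\,\omega^m .
\]
Integrating over $t\in[0,1]$ yields $Cal_M(\phi_F\phi_G)=Cal_M(\phi_F)+Cal_M(\phi_G)$, the homomorphism property. The same computation, applied to a variation of the generating isotopy with fixed endpoints, shows $\int_0^1\!\int_M F_t\,\omega^m\,dt$ is constant along such a variation, so the integral descends to the universal cover of $\mbox{Ham}(M,\omega)$; to descend to $\mbox{Ham}(M,\omega)$ itself one checks that it vanishes on Hamiltonian loops based at the identity, which is exactly where exactness of $\omega$ is used. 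I regard this loop computation as the only delicate point of (i)--(ii).

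For (iii), the inclusion $[\mbox{Ham},\mbox{Ham}]\subseteq\ker Cal_M$ is immediate since $\real$ is abelian. For the reverse inclusion I would take $\phi\in\ker Cal_M$ and fragment it into a finite product of Hamiltonian diffeomorphisms supported in Darboux balls. On a Darboux ball $B$ the restriction $Cal_M\colon\mbox{Ham}(B,\omega)\to\real$ is a surjective homomorphism; using its conjugation-invariance together with commutator identities one conjugates all factors into a common Darboux ball $B$ (changing $\phi$ only by commutators) and adjusts Calabi values, reducing the claim to the local statement that any $\beta\in\mbox{Ham}(B,\omega)$ with $Cal_M(\beta)=0$ is a product of commutators, proved by an explicit construction inside $B$. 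For (iv), write $K:=\ker Cal_M$; the same ball manipulations show $K$ is perfect, $K=[K,K]$. Given a normal subgroup $N\trianglelefteq K$ with some $h\ne\mathrm{id}$, choose a small ball $B$ with $h(B)\cap B=\varnothing$; then for $f,g\in\mbox{Ham}(B,\omega)\cap K$ the element $hfh^{-1}$ is supported away from $B$, a commutator identity gives $[[h,f],g]=[f^{-1},g]\in N$, so $N$ contains every commutator of elements of $\mbox{Ham}(B,\omega)\cap K$, hence (by the local perfectness) all of $[\mbox{Ham}(B,\omega),\mbox{Ham}(B,\omega)]$; iterating over balls and fragmenting gives $N=K$.

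The main obstacle is the local statement underlying both the reverse inclusion in (iii) and the perfectness used in (iv): that vanishing of the single real number $Cal_M$ is the \emph{only} obstruction for a Hamiltonian diffeomorphism supported in a ball to be a product of commutators. This is the symplectic analogue of the Mather--Thurston--Epstein perfectness and simplicity theorems for diffeomorphism groups, and essentially all of the genuine work is concentrated there; the remainder is fragmentation, conjugation, and bookkeeping.
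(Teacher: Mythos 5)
The paper does not prove this theorem at all: it is quoted verbatim from Banyaga \cite{B} as a background result (``The following algebraic results on $\mbox{Ham}(\BM,\omega)$ are due to Banyaga''), so there is no paper-side argument to compare against. Judged on its own terms, your sketch correctly reproduces the skeleton of Banyaga's proof: the composition Hamiltonian $F\#G=F_t+G_t\circ(\phi^F_t)^{-1}$, volume preservation giving additivity, the fragmentation-and-conjugation reduction of $\ker Cal_M\subseteq[\mbox{Ham},\mbox{Ham}]$ to a local statement, and the Epstein displacement identity $[[h,f],g]=[f^{-1},g]$ for a ball $B$ with $h(B)\cap B=\varnothing$ (the identity itself checks out).

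That said, you have --- by your own explicit account --- left as black boxes precisely the two steps that carry all of the mathematical content. First, well-definedness of $Cal_M$ on $\mbox{Ham}(M,\omega)$ rather than on generating paths: the composition formula by itself does not show that $\int_0^1\!\int_M F_t\,\omega^m\,dt$ is invariant under a homotopy of the isotopy with fixed endpoints, and splitting the descent into ``to the universal cover, then down to $\mbox{Ham}$'' is redundant --- it is a single claim, namely that the path-group Calabi vanishes on every Hamiltonian loop at the identity. Banyaga settles it directly by using $\omega=d\lambda$ to derive a closed expression for the integral depending only on $\phi_1$ and $\lambda$, which disposes of well-definedness in one stroke; your sketch gestures at the need for exactness but does not give the argument. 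Second, the local statement --- that a compactly supported Hamiltonian diffeomorphism of a ball with vanishing Calabi invariant is a product of commutators --- is, as you say, a Mather--Thurston--Epstein-type perfectness theorem, and it is the heart of both the kernel identification and the simplicity. (There is also a quietly glossed detail in (iv): $N$ is normal only in $K$, so the conjugations used to move between balls must be carried out inside $K$.) Since you flag both gaps explicitly, what you have written is an accurate roadmap of \cite{B}, appropriate for a result the paper itself cites without proof, but it should not be mistaken for a proof.
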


Note that $Cal_M$ is defined by $F$, but it can be shown that $Cal_M$ depends only on $\phi_F$ and not on the specific $F$.
Returning to the case where $\BM$ is closed, one cannot hope to construct a non-trivial homomorphism to $\real$ since $\mbox{Ham}(\BM,\omega)$ is simple.
However, for certain manifolds, one can find a map which is "locally" equal to the Calabi homomorphism and globally is a homomorphism up to a bounded error.
This map is called a Calabi quasi-morphism.
A precise definition is given in the following subsection.

\subsection{Calabi Quasi-morphism}\label{sec:CQM}

\begin{defn}
Let $G$ be a group, a function $\mu:G\rightarrow\real$ is a called a \textdf{quasi-morphism} if there exists a constant
$C$, called the \textdf{defect} of $\mu$, such that for each $x,y$ in $G$
\[ |\mu(xy)-\mu(x)-\mu(y)| < C .\]
If in addition $\mu(x^n)=n\mu(x)$ for each $x\in G$ and $n\in \mathbb{Z}$ then the quasi-morphism is called \textdf{homogeneous}.
Given a quasi-morphism we can define a homogeneous quasi-morphism called its \textdf{homogenization}
${\mu}_h$ by
\[ {\mu}_h(x) = \lim_{n\to\infty}\frac{\mu(x^n)}{n}. \]
\end{defn}
For further reading see e.g. \cite{K}.
\\
\par
Let $\BM$ be a closed manifold with a symplectic form $\omega$.
Let $U\subset\BM$ be open and connected. Denote by $\Gamma_U$ the subgroup of $\mbox{Ham}(\BM,\omega)$ generated by Hamiltonians supported in $U$.
If $\omega$ is exact on $U$ then, by Theorem \ref{thm:banyaga2}, $\Gamma_U$ admits the Calabi homomorphism $Cal_U$.
A set $U\subset\BM$ is called displaceable if there exists $f\in~\mbox{Ham}(\BM,\omega)$ such that $f(U)\cap\overline{U}=\emptyset$.
The following question was posed by M. Entov and L. Polterovich in \cite{EP}.
Can one construct a homogeneous quasi-morphism on $\mbox{Ham}(\BM,\omega)$ such that its restriction to $\Gamma_U$,
for any open, connected, exact and displaceable $U\subset\BM$, is equal to the Calabi homomorphism $Cal_U$?

\begin{defn}
A homogeneous quasi-morphism with the above property is called a \textdf{Calabi quasi-morphism}.
\end{defn}

M. Entov and L. Polterovich \cite{EP,Biran} have constructed Calabi quasi-morphisms for the case of the following symplectic manifolds:
$\mathbb{C}P^n$, a complex Grassmannian, $\mathbb{C}P^{n_1}\times ... \times\mathbb{C}P^{n_k}$ with a monotone product symplectic structure, the monotone symplectic blow-up of $\mathbb{C}P^2$ at one point.
Y. Ostrover extended it to some non-monotone manifolds \cite{Ost}.

The following result is due to P. Py \cite{Py}.

\begin{thm}\label{thm:pi1}
Let $\BM$ be an oriented closed surface of genus $g\ge 2$, equipped with a symplectic form $\omega$.
Then there exists a homogeneous quasi-morphism
\[ \mu:\mbox{Ham}(\BM,\omega)\rightarrow\real \]
such that the restriction to the subgroups $\Gamma_U$ is equal to the Calabi homomorphism, where $U$ is diffeomorphic to a disc or an annulus.
\end{thm}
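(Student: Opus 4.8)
The idea is to lift Hamiltonian dynamics to the hyperbolic plane, form an averaged ``hyperbolic triangle area'' $2$-cochain on $\mbox{Ham}(\BM,\omega)$, correct and homogenize it into a quasi-morphism, and finally identify it with the Calabi homomorphism on disc- and annulus-supported subgroups by a localization argument. Since $g\ge 2$, fix a hyperbolic metric on $\BM$ and identify the universal cover with the hyperbolic plane $\mathbb{H}$, with covering map $\pi$, deck group $\Gamma=\pi_1(\BM)\subset\mbox{PSL}(2,\real)$, ideal boundary $S^1=\partial\mathbb{H}$, and $\widetilde\omega:=\pi^{*}\omega$, a $\Gamma$-invariant area form whose density relative to the hyperbolic area form is a bounded $\Gamma$-periodic function. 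Because $\mbox{Diff}_0(\BM)$ is contractible for $g\ge 2$, the group $\mbox{Ham}(\BM,\omega)$ is simply connected, so the lift of a Hamiltonian isotopy $\{f_t\}$ to a path $\{\widetilde f_t\}$ of diffeomorphisms of $\mathbb{H}$ with $\widetilde f_0=\mathrm{id}$ does not depend on the isotopy joining $\mathrm{id}$ to $\phi=f_1$; the assignment $\phi\mapsto\widetilde\phi:=\widetilde f_1$ is then a homomorphism into the group of $\Gamma$-equivariant diffeomorphisms of $\mathbb{H}$ preserving $\widetilde\omega$. Since $\BM$ is closed, Hamiltonian vector fields have uniformly bounded norm, so $\widetilde\phi$ moves each point a uniformly bounded hyperbolic distance; in particular $\widetilde\phi$ fixes $S^1$ pointwise and every geodesic segment $[\widetilde x,\widetilde\phi\widetilde x]$ below has uniformly bounded length.

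For $x\in\BM$ pick a lift $\widetilde x$ and, for $\phi,\psi\in\mbox{Ham}(\BM,\omega)$, let $c_x(\phi,\psi)$ be the signed $\widetilde\omega$-area of the geodesic triangle with vertices $\widetilde x$, $\widetilde\phi\widetilde x$, $\widetilde{\phi\psi}\widetilde x=\widetilde\phi\widetilde\psi\widetilde x$, the sign being read off the cyclic order of the vertices. Because $\widetilde\phi$ commutes with $\Gamma$ and $\widetilde\omega$ is $\Gamma$-invariant this is independent of the lift $\widetilde x$, and because hyperbolic triangles have area $<\pi$ and $\widetilde\omega$ has bounded density, $|c_x(\phi,\psi)|$ is bounded by a constant depending only on $(\BM,\omega)$. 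Put $c(\phi,\psi):=\int_{\BM}c_x(\phi,\psi)\,\omega$, a bounded $2$-cochain on $\mbox{Ham}(\BM,\omega)$. Combining the ``tetrahedron identity'' for signed $\widetilde\omega$-areas of geodesic triangles in $\mathbb{H}$ — the alternating sum of the four faces of a geodesic tetrahedron is a null-homologous $2$-cycle, on which the closed form $\widetilde\omega$ integrates to $0$ — with the change of variables $\widetilde x\mapsto\widetilde\phi\widetilde x$, an $\widetilde\omega$-preserving bijection between fundamental domains for $\Gamma$, one obtains the cocycle-up-to-conjugation identity
\[
   \delta c(\phi,\psi,\chi)=c(\psi,\chi)-c\bigl(\phi\psi\phi^{-1},\,\phi\chi\phi^{-1}\bigr),
\]
so that $\|\delta c\|_{\infty}\le 2\|c\|_{\infty}<\infty$; in particular $c$ restricts to a genuine bounded $2$-cocycle on every abelian subgroup.

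The heart of the matter is to promote $c$ to an honest homogeneous quasi-morphism. The plan is to show that the bounded $3$-cocycle $\delta c$ is a \emph{bounded} coboundary $\delta c=\delta b$, $\|b\|_\infty<\infty$, so that $c-b$ is a bounded $2$-cocycle, and then that its class dies under $H^{2}_{b}(\mbox{Ham}(\BM,\omega);\real)\to H^{2}(\mbox{Ham}(\BM,\omega);\real)$, giving $c-b=\delta\nu$ for some (a priori unbounded) $\nu$; since $\|\delta\nu\|_\infty=\|c-b\|_\infty<\infty$, $\nu$ is a quasi-morphism and $\mu:=\nu_h$ is the desired homogeneous quasi-morphism. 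As $\mbox{Ham}(\BM,\omega)$ is simple (Theorem~\ref{thm:banyaga1}) hence perfect, it carries no non-zero homomorphism to $\real$, so $\mu$ is uniquely determined. I expect this step to be the main obstacle: the uniform control required — above all, showing that the ``lune'' discrepancy between the geodesic triangle on the vertices $\widetilde\phi\widetilde x,\dots$ and the $\widetilde\phi$-image of the geodesic triangle on $\widetilde x,\dots$ can be absorbed by a bounded cochain, \emph{without} any dependence on the Lipschitz constants of the lifts — is exactly where coarse hyperbolic geometry (divergence of geodesics, thin triangles) must be married with a combinatorial bookkeeping of these regions over a fundamental domain.

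Finally I would check the Calabi property. Let $U\subset\BM$ be diffeomorphic to a disc or an annulus and $\phi\in\Gamma_U$ be generated by a Hamiltonian supported in $U$. A connected component $\widetilde U$ of $\pi^{-1}(U)$ is simply connected, hence a topological disc (or plane) in $\mathbb{H}$; the components of $\pi^{-1}(U)$ are its $\Gamma$-translates, the canonical lift $\widetilde\phi$ is supported in their union, and on $\widetilde U$ it is the lift of the disc/annulus Hamiltonian flow. Hence $c_x(\phi^{k},\phi)=0$ unless $x\in U$, and for $x\in U$ it is computed inside $\widetilde U$ from $\widetilde\omega|_{\widetilde U}$ and the points $\widetilde\phi^{k}\widetilde x$; feeding this into the construction of $\mu$ reduces everything to a computation on a disc. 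There, up to a bounded correction that is irrelevant after homogenization, the construction coincides with the planar one of Gambaudo--Ghys, whose homogenization is the Calabi homomorphism — the average of the swept $\widetilde\omega$-areas recovering $\int_0^1\!\!\int_U F\,\omega\,dt$ up to the normalization of $\omega$; using in addition that $\ker(Cal_U)$ is simple (Theorem~\ref{thm:banyaga2}) one concludes $\mu|_{\Gamma_U}=Cal_U$. The annulus case is identical, the exactness of $\omega|_U$ being realized by a primitive on the simply connected set $\widetilde U$.
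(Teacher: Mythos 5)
The paper does not prove this theorem; it quotes it from Py \cite{Py}, and Section~\ref{sec:cont} recalls Py's construction for the continuity argument. Py builds $\mu$ from one-dimensional dynamics at infinity: lift a Hamiltonian isotopy via Banyaga's splitting (Theorem~\ref{thm:split}) to the prequantization circle bundle (the unit tangent bundle) over $\BM$, then further to $S^1\PD$, project via $p_\infty$ to the circle at infinity $S^1_\infty$, record the integer part of the winding $Rot(H,v)$, take a fiberwise infimum to get $\widetilde{angle}(H,\widetilde x)$, push down, integrate to get $\mu_1$, and homogenize. Your proposal --- a Barge--Ghys/Gambaudo--Ghys style geodesic-triangle area $2$-cochain $c$ on $\mbox{Ham}(\BM,\omega)$, promoted to a quasi-morphism by a bounded-cohomology argument --- works directly on $\mathbb{H}$ with no contact or prequantization data, and so is a genuinely different route, not a recapitulation of Py.

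The trouble is that what you wrote is a plan with a hole at exactly the step you yourself flag as ``the main obstacle.'' After the identity $\delta c(\phi,\psi,\chi)=c(\psi,\chi)-c(\phi\psi\phi^{-1},\phi\chi\phi^{-1})$ (which does follow from the tetrahedron identity together with the $\widetilde\omega$-preserving change of variables $\widetilde x\mapsto\widetilde\phi\widetilde x$), you still need two unproved facts: (a) that $\delta c=\delta b$ for some \emph{bounded} $2$-cochain $b$, so that $c-b$ is a bounded $2$-cocycle; and (b) that $[c-b]$ lies in $\ker\bigl(H^2_b(\mbox{Ham}(\BM,\omega);\real)\to H^2(\mbox{Ham}(\BM,\omega);\real)\bigr)$, so that $c-b=\delta\nu$ with $\nu$ a quasi-morphism. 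Fact (b) is essentially the conclusion one wants --- the existence of a quasi-morphism whose coboundary is $c-b$ --- so as written the argument is close to circular, and the ``coarse hyperbolic geometry plus bookkeeping'' you invoke for (a) is not supplied. Py's construction sidesteps exactly this: the quasi-morphism property comes \emph{for free} from the Poincar\'e translation number on $\widetilde{\mbox{Homeo}}_+(S^1)$, where boundedness of the defect is elementary, and no abstract vanishing in bounded cohomology is needed. Finally, the Calabi identification in your last paragraph is only gestured at (``up to a bounded correction this coincides with Gambaudo--Ghys''); in particular it is hard to see how the explicit formula $\mu(\phi_H)=\int_\BM H\omega-\sum_{x\in\ECP_F}H(x)$ of Theorem~\ref{thm:pi2}, which the rest of the paper actually needs, would emerge from your sketch.
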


In addition, P. Py has also constructed a Calabi quasi-morphism for the torus \cite{Py2}.

\begin{defn}
A smooth function $F:\BM\rightarrow\real$ is called a \textdf{Morse function} if all its critical points are non-degenerate.
If in addition the critical values of $F$ are distinct then $F$ is called a \textdf{generic Morse function}.
\end{defn}

\begin{defn}\label{def:commute}
Let $F:\BM\rightarrow\real$ be a generic Morse function.
Let ${\mathcal F}$ be the space of smooth functions on $\BM$ which commute with $F$ in the Poisson sense, i.e.
\[ {\mathcal F}:=\{H:\BM\rightarrow\real| \{F,H\}=0\}.\]
Note that $F$ and $H$ commute in the Poisson sense if and only if \[ \omega(sgradF,sgradH)=0.\]
Let $\Gamma$ be the set of time one maps corresponding to the flows generated by the Hamiltonian functions in ${\mathcal F}$, i.e.
\[ \Gamma:=\{\phi_H| H\in{\mathcal F}\}.\]
\end{defn}

Clearly, $\Gamma$ is an abelian subgroup of $\mbox{Ham}(\BM,\omega)$.
It is easy to show that if a homogeneous quasi-morphism is defined on an abelian group, then it is in fact a homomorphism.
Hence, the restriction of $\mu$, defined in Theorem \ref{thm:pi1}, on the subgroup $\Gamma$ is a homomorphism.
In \cite{Py} P. Py has proved the following formula for $\mu$ on $\Gamma$, assuming that the total area of $\BM$ is equal to $2g-2$,

\[ \mu(\phi_H)=\int_{\BM}H\omega - \sum_{x\in\ECP_F}H(x), \]
where $H\in{\mathcal F}$ and $\ECP_F$ is a certain subset of the critical points of $F$.
A precise formulation of this theorem will be given in Section \ref{sec:essential}.

\subsection{Quasi-state}
The notion of a quasi-state originates in quantom mechanics \cite{A,A1}, and has been a subject of intensive study in recent years following the paper \cite{A2} by J. F. Aarnes.
Here is the definition.

\begin{defn}
Denote by $C(\BM)$ the commutative (with respect to multiplication) Banach algebra of all continuous functions on $\BM$ endowed with the uniform norm.
For a function $F\in C(\BM)$ denote by ${\mathcal A}_F$ the uniform closure of the set of functions of the form $p\circ F$,
where $p$ is a real polynomial.
A (not necessarily linear) functional $\xi:C(\BM)\rightarrow\real$ is called a \textdf{quasi-state}, if it satisfies the following axioms:
\\
\textdf{Quasi-linearity.} $\xi$ is linear on ${\mathcal A}_F$ for every $F\in C(\BM)$.
\\
\textdf{Monotonicity.} $\xi(F)\le\xi(G)$ for $F\le G$.
\\
\textdf{Normalization.} $\xi(1)=1$.
\end{defn}
It is easy to show that a quasi-state is Lipschitz continuous with respect to the $C^0$-norm.
\\
\par
\textdf{Main Result.}
In the following, $\BM$ will be an oriented closed surface of genus $g\ge 2$, equipped with a symplectic form $\omega$,
and $\mu$ is Py's quasi-morphism given in Theorem \ref{thm:pi1}.
In \cite{EP1} M. Entov and L. Polterovich construct a quasi-state from a Calabi quasi-morphism,
Our goal is to show that this procedure is applicable to Py's Calabi quasi-morphism.
In the following, we assume that the total area of $\BM$, denoted by $Vol(\BM)$, is equal to $2g-2$.
The quasi-state is obtained from $\mu$ in the following way:
\\
\par
\begin{defn}\label{def:QS}
For a smooth function $F$ define

\[ \xi(F):=\frac{\int_{\BM}F\omega}{Vol(\BM)} - \frac{\mu(\phi_{F})}{Vol(\BM)} .\]
\end{defn}
The main result of the thesis is that the functional $\xi$ related to Py's quasi-morphism is a quasi-state.

\begin{thm} \label{thm:state_1}
The functional $\xi$ can be extended to $C(\BM)$, and the extension is a quasi-state.
\end{thm}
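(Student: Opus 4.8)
The plan is to carry out, for Py's quasi-morphism $\mu$, the construction by which Entov and Polterovich \cite{EP1} pass from a Calabi quasi-morphism to a quasi-state. Three things must be verified: that $\xi$, so far defined only on $C^\infty(\BM)$, extends to $C(\BM)$, and that the extension is quasi-linear, monotone and normalized. For the extension, first I would show that $\xi$ is Lipschitz for the $C^0$-norm on $C^\infty(\BM)$: the term $\frac1{Vol(\BM)}\int_\BM F\omega$ is plainly $1$-Lipschitz, and for $\mu(\phi_F)/Vol(\BM)$ one combines the fact that Py's $\mu$ is Lipschitz in Hofer's metric with the bound $d_{\mathrm{Hofer}}(\phi_F,\phi_G)\le\mathrm{osc}(F-G)$; the bounded error coming from the quasi-morphism defect is then absorbed by replacing $F,G$ by $nF,nG$, dividing by $n$ and letting $n\to\infty$ (homogeneity of $\mu$ together with $\phi_{nF}=\phi_F^{\,n}$). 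Density of $C^\infty(\BM)$ in $C(\BM)$ gives a unique Lipschitz extension. Normalization is immediate: $sgrad\,1=0$, so $\phi_1=\mathrm{id}$, $\mu(\mathrm{id})=0$, and $\xi(1)=Vol(\BM)/Vol(\BM)=1$.

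For quasi-linearity, fix $F\in C^\infty(\BM)$. Any two functions $p\circ F$ and $q\circ F$ (with $p,q$ real polynomials) Poisson-commute, so the $\phi_{p\circ F}$ generate an abelian subgroup of $\mbox{Ham}(\BM,\omega)$; since a homogeneous quasi-morphism is a homomorphism on an abelian group, $\mu(\phi_{(p+q)\circ F})=\mu(\phi_{p\circ F})+\mu(\phi_{q\circ F})$ and $\mu(\phi_{a\cdot(p\circ F)})=a\,\mu(\phi_{p\circ F})$ for all $a\in\real$ (integers by homogeneity, rationals since $\phi_{(1/q)H}^{\,q}=\phi_H$, all reals by $C^0$-continuity, negatives via $\phi_{-H}=\phi_H^{-1}$). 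Hence $p\mapsto\xi(p\circ F)$ is linear, and by the Lipschitz bound $\xi$ is linear on the uniform closure ${\mathcal A}_F$. For general $F\in C(\BM)$ one uses that, since $\BM$ is connected, $\mathrm{range}\,F$ is an interval and ${\mathcal A}_F=\{u\circ F:\ u\in C(\mathrm{range}\,F)\}$; approximating $F$ in $C^0$ by smooth functions and using the continuity of $\xi$, linearity passes to the general case.

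Monotonicity is the crux, and it is here that the explicit description of $\mu$ on Poisson-commuting functions — the form of Py's theorem to be stated in Section \ref{sec:essential} — is indispensable. For $H$ Poisson-commuting with a generic Morse function $F$ one has $\mu(\phi_H)=\int_\BM H\omega-\sum_{x\in\ECP_F}H(x)$, hence $\xi(H)=\frac1{Vol(\BM)}\sum_{x\in\ECP_F}H(x)$; since $\#\ECP_F=2g-2=Vol(\BM)$ this realizes $\xi(H)$ as an average of the values of $H$, so $\min_\BM H\le\xi(H)\le\max_\BM H$ for every such $H$, and in particular, approximating an arbitrary smooth function by generic Morse functions, $\min_\BM F\le\xi(F)\le\max_\BM F$ for all $F\in C(\BM)$. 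To upgrade this to $\xi(F)\le\xi(G)$ whenever $F\le G$, the plan is to approximate $F,G$ in $C^0$ by generic Morse functions $F',G'$ with $F'\le G'$ and to reduce the desired inequality to the existence of a bijection $\sigma\colon\ECP_{F'}\to\ECP_{G'}$ with $F'(x)\le G'(\sigma(x))$ for every $x$; such a matching is produced by Hall's marriage theorem, once hyperbolic geometry is used to control which point configurations can occur as the sets $\ECP$. With monotonicity in hand on $C^\infty(\BM)$, density finishes the proof of Theorem \ref{thm:state_1}.

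The main obstacle is precisely monotonicity: quasi-linearity, normalization, and even the two-sided bound $\min F\le\xi(F)\le\max F$ do not by themselves force $\xi$ to be order-preserving, since $\xi(G)-\xi(F)$ is not controlled by $\xi(G-F)$. Overcoming this requires comparing the special point sets $\ECP_{F'}$ and $\ECP_{G'}$ attached to different Morse functions, which is exactly where the combinatorial (Hall's theorem) and hyperbolic-geometric content of the paper is concentrated.
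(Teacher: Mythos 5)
Your overall architecture matches the paper's: monotonicity is correctly identified as the crux, the permutation matching via Hall's theorem applied to the essential critical points is the right idea, and the quasi-linearity argument (Poisson-commuting functions generate an abelian subgroup on which the homogeneous quasi-morphism is a homomorphism, then pass to the uniform closure) is essentially the paper's strong quasi-additivity plus homogeneity. Normalization is also the same.

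The genuine gap is in the extension step. You assert that Py's $\mu$ is Lipschitz in Hofer's metric and use this, together with the homogenization trick, to get a $C^0$-Lipschitz bound for $\xi$ on all of $C^\infty(\BM)$. But Hofer-Lipschitzness of $\mu$ is a non-trivial claim that you do not justify, and it is not automatic here. For the Entov--Polterovich quasi-morphisms built from spectral invariants, Hofer-Lipschitzness is essentially built into the construction; Py's quasi-morphism is built from rotation numbers in the unit tangent bundle, and nothing in that construction hands you a Hofer bound for free. In fact the paper's logic runs the other way: $C^0$-Lipschitzness of $\xi$ is \emph{derived} from the monotonicity theorem on generic Morse functions together with the translation identity $\xi(F+k)=\xi(F)+k$ (so $G-\|F-G\|_{C^0}\le F\le G+\|F-G\|_{C^0}$ gives $|\xi(F)-\xi(G)|\le\|F-G\|_{C^0}$). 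Trying to obtain the Lipschitz bound independently through Hofer geometry, and then proving monotonicity afterwards, is either circular or leaves the Hofer claim as an unproved hypothesis.

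A structural consequence of this gap is that you omit the paper's $C^2$-continuity theorem for $\xi$ (Theorem~\ref{thm:cont_1}). The paper proves the Lipschitz estimate only on generic Morse functions, extends by density to $C(\BM)$, and then needs $C^2$-continuity to verify that the extension $\widehat{\xi}$ actually coincides with the originally defined $\xi$ on all of $C^\infty(\BM)$ (not just on the Morse-dense subset). Your plan has no substitute for this reconciliation once the Hofer-Lipschitz shortcut is removed. So as written, the proposal would not close without either proving Hofer-Lipschitzness of $\mu$ from scratch (which would likely reproduce the monotonicity work anyway) or reinstating the paper's two-step scheme of Lipschitz-on-Morse plus $C^2$-continuity.
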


Note that this result implies that $\xi$ is Lipschitz continuous with respect to the $C^0$-norm.
\\
\par
\textdf{Organization of the work.}
In the following section we prove the main theorem assuming the monotonicity and continuity theorems, which are proved later on.
In sections \ref{sec:reeb}, \ref{sec:essential}, \ref{sec:pants}, \ref{sec:eight} we make the preparations for the monotonicity theorem,
which is proved in Section \ref{sec:monoton}.
In Section \ref{sec:reeb} we define the Reeb graph which is the base for the following constructions.
In Section \ref{sec:essential} we introduce the notion of essential critical points.
In Section \ref{sec:pants} we construct a pair of pants decomposition.
In Section \ref{sec:eight} we prove an intersection theorem of figure eights related to the pair of pants decomposition.
In Section \ref{sec:cont} we prove the continuity theorem by analyzing Py's construction of the quasi-morphism, this section can be read independently.

\ftsection[\secsize]{Main Steps}{sec:results}

The main ingredients of the proof are the following theorems.

\begin{thm} \label{thm:monotone_1}
Let $F,G:\BM\rightarrow\real$ be generic Morse functions,
such that $F\le G$.
Then $\xi(F)\le\xi(G)$.
\end{thm}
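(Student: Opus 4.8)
The plan is to reduce Theorem~\ref{thm:monotone_1} to Py's explicit formula for $\mu$ on a Poisson‑centralizer — whose precise form is recalled in Section~\ref{sec:essential} — and then to deform $F$ into $G$. First I would observe that a generic Morse function lies in its own centralizer: since $\{F,F\}=\omega(sgradF,sgradF)=0$, the function $F$ belongs to the algebra $\mathcal F$ of Definition~\ref{def:commute}, so Py's formula applies with $H=F$. Combined with the normalization $Vol(\BM)=2g-2$ it reads
\[
\mu(\phi_F)=\int_{\BM}F\omega-\sum_{x\in\ECP_F}F(x),
\]
and substituting this into Definition~\ref{def:QS} gives
\[
\xi(F)=\frac{1}{Vol(\BM)}\sum_{x\in\ECP_F}F(x),
\]
and likewise $\xi(G)=\frac{1}{Vol(\BM)}\sum_{x\in\ECP_G}G(x)$. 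Thus the theorem is equivalent to the inequality $\sum_{x\in\ECP_F}F(x)\le\sum_{x\in\ECP_G}G(x)$, an inequality between sums over two a priori unrelated finite sets of critical points.

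To exploit the hypothesis $F\le G$ I would interpolate by the affine path $F_t:=(1-t)F+tG$, $t\in[0,1]$, so that $\partial_tF_t=G-F\ge 0$. There is a finite set $B\subset(0,1)$ — the $t$ at which $F_t$ fails to be a generic Morse function, together with the finitely many $t$ at which the combinatorial data of $F_t$ underlying the construction of $\ECP_{F_t}$ changes — off which everything is smooth: on each connected component of $[0,1]\setminus B$ the critical points of $F_t$ depend smoothly on $t$ by the implicit function theorem (the Hessian being nondegenerate), no two critical values collide, the Reeb graph of $F_t$ keeps a fixed combinatorial type, and hence — the selection of essential critical points carried out in Sections~\ref{sec:reeb}--\ref{sec:eight} being determined by that combinatorial data — the points of $\ECP_{F_t}$ move along smooth branches $x_1(t),\dots,x_k(t)$. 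On such a component I may differentiate termwise, and since each $x_i(t)$ is a critical point of $F_t$,
\begin{align*}
\frac{d}{dt}\,\xi(F_t)
&=\frac{1}{Vol(\BM)}\sum_i\frac{d}{dt}F_t\bigl(x_i(t)\bigr)\\
&=\frac{1}{Vol(\BM)}\sum_i(G-F)\bigl(x_i(t)\bigr)\;\ge\;0 .
\end{align*}
So $t\mapsto\xi(F_t)$ is nondecreasing on each component of $[0,1]\setminus B$.

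Finally I would cross the finitely many parameters in $B$. For $t_0\in B$ the one‑sided limits $\lim_{t\to t_0^-}\xi(F_t)$ and $\lim_{t\to t_0^+}\xi(F_t)$ exist by the monotonicity just established; moreover $F_t\to F_{t_0}$ in the $C^0$‑norm as $t\to t_0$ while all nearby $F_t$ with $t\notin B$ are generic Morse, so the continuity theorem of Section~\ref{sec:cont} — which provides exactly the $C^0$‑continuity of $\xi$ on generic Morse functions needed here — forces these two limits to coincide. Hence $t\mapsto\xi(F_t)$ extends to a nondecreasing function on all of $[0,1]$, and in particular $\xi(F)=\xi(F_0)\le\xi(F_1)=\xi(G)$.

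The step I expect to require the most care is the claim used in the second paragraph: that $\ECP_{F_t}$ really varies continuously along each smooth piece of the path, i.e.\ that Py's choice of essential critical points is a combinatorial invariant of the Reeb graph, stable under $C^\infty$‑small perturbations preserving the generic Morse type. Establishing this robustness is exactly the purpose of Sections~\ref{sec:reeb}, \ref{sec:essential}, \ref{sec:pants} and~\ref{sec:eight} (the Reeb graph, essential critical points, the pair‑of‑pants decomposition, and the figure‑eight intersection theorem, the last two resting on Hall's marriage theorem); once that is in hand, together with the continuity theorem at the finitely many bad parameters, the monotonicity reduces to the elementary sign computation above.
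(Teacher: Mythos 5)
Your proposal takes a genuinely different route from the paper: you interpolate along the affine path $F_t=(1-t)F+tG$, differentiate $\xi(F_t)$ via the envelope (critical-point) identity $\tfrac{d}{dt}F_t(x_i(t))=(G-F)(x_i(t))$, and argue piecewise monotonicity plus continuity. The paper never deforms; it compares $\ECP_F$ and $\ECP_G$ directly. It uses the pair-of-pants decompositions of $F$ and $G$, the Gauss--Bonnet area count, and Hall's marriage theorem to produce a permutation $\sigma$ with $\overline{P}_i\cap\overline{Q}_{\sigma(i)}\ne\emptyset$, then the figure-eight intersection results to get a point $z_i\in e_i\cap f_{\sigma(i)}$ and concludes termwise $F(x_i)=F(z_i)\le G(z_i)=G(y_{\sigma(i)})$. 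In particular, your sentence that "establishing this robustness is exactly the purpose of Sections 3--6" mischaracterizes those sections: they do not prove stability of $\ECP_{F_t}$ under perturbation, they prove the intersection theorem (Theorem~\ref{thm:intersex}) that replaces any deformation argument.

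There is a genuine gap in your argument, at the step where you introduce the exceptional set $B$. You assert $B$ is finite, but $F$ and $G$ are \emph{arbitrary} generic Morse functions with $F\le G$, so the set of $t$ for which $F_t$ fails to be generic Morse (or for which the Reeb graph changes combinatorial type) need not be finite; it is merely the complement of an open dense set in $[0,1]$. Continuity of $t\mapsto\xi(F_t)$ together with $\tfrac{d}{dt}\xi(F_t)\ge 0$ on an open dense subset of full measure does \emph{not} imply monotonicity --- the Cantor--Lebesgue function shows that a continuous function can have derivative $\ge 0$ off a measure-zero set and still be strictly decreasing. Your argument does go through if $B$ is finite, but you have not proved that, and proving it (or working around it by approximating $F,G$ within the partial order by a transversal pair) is a nontrivial piece that is missing. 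There is also a citation slip worth fixing: Theorem~\ref{thm:cont_1} gives $C^2$-continuity of $\xi$ on all smooth functions, not "$C^0$-continuity on generic Morse functions"; the latter is what the paper \emph{derives} from monotonicity in the proof of Theorem~\ref{thm:state_1}, so invoking it here would be circular. Since the affine path converges in every $C^k$, your limit argument at points of $B$ should invoke $C^2$-continuity, not $C^0$.
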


\begin{thm} \label{thm:cont_1}
The functional $\xi:C^{\infty}(\BM)\rightarrow\real$ is continuous with respect to the $C^2$-topology.
\end{thm}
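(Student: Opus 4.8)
The plan is to separate the obvious part of $\xi$ from the part carrying the content. Writing
\[
\xi(F)=\frac{1}{Vol(\BM)}\int_{\BM}F\,\omega-\frac{1}{Vol(\BM)}\mu(\phi_F),
\]
the first summand is continuous already in the $C^0$-topology, so it suffices to show that $F\mapsto\mu(\phi_F)$ is continuous from the $C^2$-topology on $C^\infty(\BM)$ to $\real$. For this I would unwind Py's construction, exhibiting $\mu$ as the homogenization of an auxiliary quasi-morphism that depends continuously on $F$, and then use the bounded defect to transfer continuity to the homogenized limit.

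First I would record two soft facts about the dependence of the dynamics on $F$. (i) The assignment $F\mapsto sgrad\,F$ is a continuous linear map from $(C^\infty(\BM),C^2)$ to vector fields with the $C^1$-topology, and the time-one map of the flow of a vector field depends $C^1$-continuously on the field in the $C^1$-topology (Gr\"onwall's inequality applied to the flow equation and to its linearization); hence $F\mapsto\phi_F$ is continuous from the $C^2$-topology to the $C^1$-topology on $\mbox{Ham}(\BM,\omega)$, and, composition being $C^1$-continuous, so is $F\mapsto\phi_F^{\,n}$ for every fixed $n$. (ii) In Py's setting, since $\BM$ has genus $g\ge2$, every $\phi\in\mbox{Ham}(\BM,\omega)$ has a canonical lift $\widetilde\phi$ to the universal cover $\mathbb{H}^2$, a $\pi_1(\BM)$-equivariant diffeomorphism normalized at a fixed base point, depending only on $\phi$; if $\phi,\psi$ are $C^1$-close on $\BM$, then $\widetilde\phi,\widetilde\psi$ are close for $C^1$-convergence on compact subsets of $\mathbb{H}^2$, by equivariance and compactness of a fundamental domain.

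Next I would extract from Py's paper the structure of $\mu$: it is the homogenization of an inhomogeneous quasi-morphism $\widehat\mu:\mbox{Ham}(\BM,\omega)\to\real$ of finite defect $D$, where $\widehat\mu(\phi)$ is obtained by integrating, over a fixed domain of finite measure (a fundamental domain in $\mathbb{H}^2$, or the two-point configuration space of $\BM$), a bounded function $\Phi(\,\cdot\,,\widetilde\phi)$ built from hyperbolic-geometry data — geodesic segments joining lifted points and the angles they make, read off through a quasi-morphism on $\pi_1(\BM)$. The step requiring real work is to check, by following the construction, that $\Phi(x,\widetilde\phi)$ is bounded uniformly in $x$ and continuous in $\widetilde\phi$ for $C^1$-convergence on compacts, including on the loci where the relevant geodesics degenerate (lifted points colliding), where continuity of the angle must be replaced by the uniform bound that makes $\widehat\mu$ a quasi-morphism in the first place. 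Granting this, dominated convergence together with (i) and (ii) shows that $F\mapsto\widehat\mu(\phi_F)$, and more generally $F\mapsto\widehat\mu(\phi_F^{\,n})$ for each fixed $n$, is $C^2$-continuous.

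Finally I would pass to the homogenization uniformly in $F$. The defect bound gives $|\widehat\mu(g^{\,n})-n\,\widehat\mu(g)|\le(n-1)D$, hence $\big|\tfrac1n\widehat\mu(\phi_F^{\,n})-\mu(\phi_F)\big|\le D/n$ for every $F$ and $n$. Given $\varepsilon>0$, fix $n>3D/\varepsilon$; by the previous paragraph there is a $C^2$-neighborhood of any $F_0$ on which $|\widehat\mu(\phi_F^{\,n})-\widehat\mu(\phi_{F_0}^{\,n})|<n\varepsilon/3$, and the triangle inequality then yields $|\mu(\phi_F)-\mu(\phi_{F_0})|<\varepsilon$. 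Hence $F\mapsto\mu(\phi_F)$, and therefore $\xi$, is continuous in the $C^2$-topology. I expect the middle step — making Py's construction explicit enough to establish the uniform bound on $\Phi$ and its $C^1$-continuity in the lift, in particular controlling the degenerate loci — to be the main obstacle; the dynamical inputs and the homogenization step are routine. Should Py's $\mu$ actually be presented directly as an average of a bounded, continuous $\pi_1(\BM)$-cocycle that is already homogeneous, the same analysis applies with the homogenization step omitted.
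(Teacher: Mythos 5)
Your high-level plan matches the paper's: write $\xi(F)=\frac{1}{Vol(\BM)}\int_\BM F\omega-\frac{1}{Vol(\BM)}\mu(\phi_F)$, realize $\mu$ as the homogenization $\mu(\phi_F)=\lim_n\frac{1}{n}\mu_1(\phi_F^{\,n})$ of an auxiliary inhomogeneous quasi-morphism $\mu_1$ coming from Py's construction, control the tail by the defect bound $|\mu(\phi_F)-\frac1n\mu_1(\phi_F^n)|\le C/n$, and then squeeze. Your soft dynamical facts (i) and (ii) are also used, implicitly, in the paper.

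Where the proposal has a genuine gap is the middle step, and it is precisely the step you flagged as the main obstacle. You assert that, after verifying boundedness and continuity of the integrand $\Phi$, dominated convergence yields that $F\mapsto\mu_1(\phi_F^{\,n})$ is \emph{continuous} in the $C^2$-topology. In Py's construction this is not available and not what the paper proves. The pointwise quantity $\widetilde{angle}(H,\widetilde x)=-\inf_{\widetilde\pi(v)=\widetilde x}\lfloor Rot(H,v)\rfloor$ involves a floor function and an infimum over a circle of directions; the rotation number $Rot(H,v)$ is indeed continuous in $H$, but passing to $\lfloor\cdot\rfloor$ and then to $\inf_v$ introduces integer jumps, and there is no argument that these jumps occur only on a null set of $x$. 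Consequently one cannot invoke dominated convergence to get continuity of $\mu_1$. Your remark that ``continuity of the angle must be replaced by the uniform bound'' is exactly the right instinct, but you do not carry it through: the conclusion you would then get is not continuity of $\mu_1(\phi_F^{\,n})$ but merely a \emph{uniform jump bound}. That is what the paper proves (Proposition~\ref{prop:k}): there is a constant $K=7\cdot Vol(\BM)$ such that for every $H$ there is $\delta>0$ with $|\mu_1(\phi_H)-\mu_1(\phi_{H'})|\le K$ whenever $\|H-H'\|_{C^2}<\delta$. The proof uses that a $C^2$-small perturbation changes every rotation number by less than $1$, hence every floor by at most $1$, hence $\widetilde{angle}$ by at most $7$; nothing stronger.

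The paper then closes the argument by the algebraic identity $\phi_H^m=\phi_{mH}$ for autonomous $H$: applying the jump bound to $mH$ gives $|\mu_1(\phi_H^m)-\mu_1(\phi_{H'}^m)|\le K$, hence $|\mu_m(\phi_H)-\mu_m(\phi_{H'})|\le K/m$ (Proposition~\ref{prop:qmbound}), and combined with $|\mu_\infty-\mu_m|\le C/m$ (Proposition~\ref{prop:qmuniform}) the choice $N>\max(4C/\epsilon,2K/\epsilon)$ gives $|\mu_\infty(\phi_H)-\mu_\infty(\phi_{H'})|<\epsilon$. Notice that your final $\epsilon$-argument, in which you only demand $|\mu_1(\phi_F^{\,n})-\mu_1(\phi_{F_0}^{\,n})|<n\varepsilon/3$, is in fact compatible with this weaker jump bound (take $n>3K/\varepsilon$), so the end of your proof would survive. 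What does not survive is the claim of genuine continuity of $\mu_1(\phi_F^{\,n})$ by dominated convergence, which is both unproved and unnecessary. To fix the proposal, replace ``continuity of $\Phi$ in $\widetilde\phi$'' by ``uniform integer jump bound on the integrand under $C^2$-small perturbation,'' establish it from the continuity of the rotation number, and then run the same squeeze; that is exactly the paper's proof.
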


We will prove Theorem \ref{thm:state_1} assuming Theorem \ref{thm:monotone_1} and \ref{thm:cont_1}.
\begin{proof}
Normalization is due to the fact that $\mu$ is a homogeneous quasi-morphism.
Indeed, $\phi_1$ corresponds to the identity element in the group $\mbox{Ham}(\BM,\omega)$,
so $\mu(\phi_1)=\mu(Id)=0$ by the homogeneity of $\mu$, and it follows that $\xi(1)=1$.
Since $\phi_{F+k}=\phi_F$ for any smooth function $F$ and a real constant $k$, we get from the definition of $\xi$ that
\begin{equation}\label{frml:line}
\xi(F+k)=\xi(F)+k .
\end{equation}
Let $\epsilon>0$ and $F$, $G$ be generic Morse functions, then
\[ |F-G|\le \|F-G\|_{C^0} ,\]
thus
\[ G- \|F-G\|_{C^0} \le F \le G+ \|F-G\|_{C^0} .\]
From the monotonicity of generic Morse functions (Theorem \ref{thm:monotone_1}) and Equation \ref{frml:line} we get
\[ \xi(G)- \|F-G\|_{C^0}\le \xi(F)\le \xi(G)+ \|F-G\|_{C^0} \]
so
\[ |\xi(F)-\xi(G)|\le  \|F-G\|_{C^0} \]
Thereby, $\xi$ is Lipschitz continuous on generic Morse functions with respect to the $C^0$-norm.
Generic Morse functions are $C^0$-dense in $C(\BM)$, therefore there is a unique extension of $\xi$ to a continuous map $\widehat{\xi}:C(\BM)\rightarrow\real$.
We will show that $\widehat{\xi}|_{C^\infty(\BM)}\equiv\xi$.
Indeed, for $H\in C^\infty(\BM)$ we can find a sequence of Morse functions $\{H_n\}$ that $C^2$-converges to $H$.
By Theorem \ref{thm:cont_1}, $\lim_{n\rightarrow\infty}\xi(H_n)=\xi(H)$.
In particular $\{H_n\}$ $C^0$-converges to $H$, so $\lim_{n\rightarrow\infty}\xi(H_n)=\widehat{\xi}(H)$ by definition.
Hence $\widehat{\xi}(H)=\xi(H)$, as required.
\\
\par
Monotonicity is easily extended to $\widehat{\xi}$ in the following way:
For $F$, $G\in~C(\BM)$ such that $F\le G$, choose generic Morse function sequences $\{F_n\}$, $\{G_n\}$ such that $\|F-F_n\|_{C^0}<\frac{1}{n}$
, $\|G-G_n\|_{C^0}<\frac{1}{n}$.
Define the sequences $\{F_n'\}$, $\{G_n'\}$ as follows: $F_n':=F_n-\frac{1}{n}$, $G_n':=G_n+\frac{1}{n}$.
Then for $n\in {\mathbb N}$,
\[ F_n'< F\le G<G_n'.\]
By the monotonicity on Morse functions we obtain $\xi(F_n')\le\xi(G_n')$ and by taking limits we get $\widehat{\xi}(F)\le \widehat{\xi}(G)$.
\\
\par
In order to show quasi-linearity we will first show a property called \textdf{strong quasi-additivity} which is defined as follows:\\
$\xi(F+G)=\xi(F)+\xi(G)$ for all smooth functions $F$, $G$ which commutes in the Poisson bracket, i.e. $\{F,G\}=0$.
The functional $\widehat{\xi}$ satisfies this property since it coincides with $\xi$ on smooth functions and the quasi-morphism $\mu$ is linear on commuting elements.
From the homogeneity of $\mu$ we get that $\xi$ is homogeneous and it is easily extended to $\widehat{\xi}$.
It is easy to see that strong quasi-additivity together with homogeneity yields quasi-linearity.
\end{proof}
\newpage
\ftsection[\secsize]{The Reeb Graph}{sec:reeb}

In this section we will define the Reeb graph \cite{Reeb} and prove a statement on its Euler characteristic.
The Reeb graph is a simple yet very useful tool in this work, and we will use it in the following sections to define the set of essential critical points,
and to construct the pair of pants decomposition.
\\
\begin{defn}
Let $\BM$ be a closed oriented surface of genus $g$.
Let $F:\BM\rightarrow\real$ be a generic Morse function.
Let $\{x_1, x_2,...,x_n\}\subset\BM$ be the set of critical points of $F$, with critical values $c_i=F(x_i)$, for $1\le i\le n$, such that $c_1<c_2<...<c_n$.
We will define the \textdf{Reeb graph of F}, $\Gamma(V,E)$, in the following way:\\
For each critical value $c_i$, the connected component of $F^{-1}(c_i)$ that contains $x_i$ can be:\\
1) The critical point $x_i$ in the case that its index is $0$ or $2$.\\
2) An immersed closed curve with a unique transversal double point $x_i$. This is the case when $x_i$ is of index $1$. \\
We will assign a vertex $v_i$ to the connected component of $F^{-1}(c_i)$ described above.
Let $C$ be the union of the above connected components, then $\BM\backslash C$ doesn't contain any critical points with respect to $F$.
Hence, by Morse theory \cite{Milnor}, it is a disjoint finite union of open cylinders.
The boundaries of each cylinder are contained in two connected components of $C$.
Define an edge associated with this cylinder between the two vertices that correspond to these components.
By Morse theory, each cylinder can be parameterized by $S^1\times(c_k,c_l)$ where $c_k$ and $c_l$ are the critical values of the critical points that bound the cylinder,
and each $S^1\times\{t\}$ is a connected component of the level curve $F^{-1}(t)$.
Parameterize the corresponding edge by the segment $(c_k,c_l)$.
Hence, we can define in a natural way a projection map $\pi_{\Gamma}:~\BM\rightarrow~\Gamma$,
by sending each connected component that contains a critical point $x_i$ to the vertex $v_i$,
and each connected component of the form  $S^1\times\{t\} \subset S^1\times(c_k,c_l)$ to the point $t$ in the corresponding edge, parameterized by $(c_k,c_l)$.

This is illustrated in Figure 1.

\begin{figure}[h]
\centering
\includegraphics{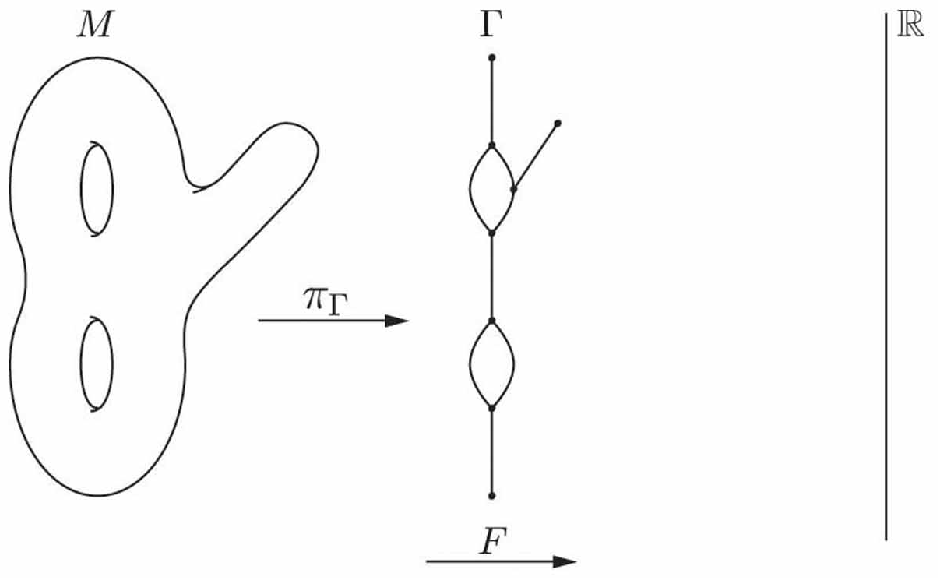}
\caption{$\Gamma$ is the Reeb graph of $F:\BM\rightarrow\real$. }
\label{fig:Reeb}
\end{figure}

Note that if $H:\BM\rightarrow\real$ is constant on each connected level curve of $F$ then we can define $H_{\Gamma}:\Gamma\rightarrow\real$ such
that $H=H_{\Gamma}\circ\pi_{\Gamma}$ by taking
\[
H_{\Gamma}(y) =
\left\{
\begin{array}{rl}
H(F^{-1}(c_i))  & y=v_i\\
H(F^{-1}(y))    & y \mbox{ in edge $(c_k,c_l)$.}
\end{array}
\right.
\]
\end{defn}

\begin{prop}\label{prop:graph_euler}
Let $\BM$ be a closed oriented surface of genus $g$.
Let $F:\BM\rightarrow\real$ be a generic Morse function.
Let $\Gamma$ be the Reeb graph of $F$.
Then the Euler characteristic $\chi(\Gamma)$ is equal to $1-g$.
\end{prop}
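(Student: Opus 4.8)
The plan is to express $\chi(\Gamma)$ in terms of the numbers $c_0,c_1,c_2$ of critical points of $F$ of index $0,1,2$, and then invoke the Morse-theoretic identity $\chi(\BM)=c_0-c_1+c_2=2-2g$. By construction $\Gamma$ has exactly one vertex per critical point, so $\#V(\Gamma)=c_0+c_1+c_2$; the whole problem is therefore to count the edges, which I will do through the handshake lemma $2\,\#E(\Gamma)=\sum_{v}\deg v$. Thus everything reduces to computing the degree of each vertex of $\Gamma$.

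For a vertex $v_i$ coming from an extremum (index $0$ or $2$), the level component $C_i=\{x_i\}$ is a point and, by the Morse normal form, for small $\epsilon>0$ the connected component $N$ of $F^{-1}([c_i-\epsilon,c_i+\epsilon])$ containing $x_i$ is a disk bounded by a single regular level circle; that circle lies in exactly one cylinder of $\BM\setminus C$, so $\deg v_i=1$. The crux --- and the step I expect to be the main obstacle --- is the index-$1$ case, where I claim $\deg v_i=3$. Here $C_i$ is a figure-eight, so $\chi(C_i)=-1$, and for small $\epsilon$ the component $N$ of $F^{-1}([c_i-\epsilon,c_i+\epsilon])$ containing $x_i$ is a regular neighborhood of $C_i$, hence a compact \emph{orientable} surface with $\chi(N)=-1$; writing $\mathrm{genus}(N)=h$ and $\#\partial N=b$, this forces $2h+b=3$, so $(h,b)\in\{(0,3),(1,1)\}$. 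Orientability rules out the one-holed-torus case: since $F-c_i$ changes sign near the saddle, $\partial N$ must contain at least one level circle at level $c_i-\epsilon$ and at least one at level $c_i+\epsilon$, so $b\ge 2$, whence $(h,b)=(0,3)$ and $N$ is a pair of pants. Its three boundary circles are regular level curves, each lying in a cylinder of $\BM\setminus C$ that abuts $C_i$, and --- since every edge of $\Gamma$ runs between two \emph{distinct} critical levels, so none of these three circles can be the ``other end'' of a common edge --- they account for three distinct edge-ends at $v_i$, i.e.\ $\deg v_i=3$.

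Granting this, the remaining computation is pure arithmetic: $2\,\#E(\Gamma)=(c_0+c_2)\cdot 1+c_1\cdot 3$, so
\[
\chi(\Gamma)=\#V(\Gamma)-\#E(\Gamma)=(c_0+c_1+c_2)-\tfrac12\bigl(c_0+c_2+3c_1\bigr)=\tfrac12\bigl(c_0-c_1+c_2\bigr)=\tfrac12\,\chi(\BM)=1-g .
\]
The only non-bookkeeping input is the determination of the vertex degrees, and within that the genuinely geometric point is the use of orientability to pin down the local model at a saddle as a pair of pants; everything else is the Morse theory already built into the definition of $\Gamma$ together with elementary graph theory.
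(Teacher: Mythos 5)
Your proof is correct and follows essentially the same route as the paper: count vertices as critical points, count edges via the handshake lemma using degrees $1$ at extrema and $3$ at saddles, and compare with $\chi(\BM)$ computed by Morse theory. The only difference is that you actually justify the degree-$3$ claim at a saddle (via the regular neighborhood and classification of surfaces with boundary), which the paper simply states as an observation; the one slip is your phrase ``orientability rules out the one-holed-torus case'' --- a one-holed torus \emph{is} orientable, and it is the sign-change argument you give immediately afterward that actually excludes $(h,b)=(1,1)$.
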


\begin{proof}
Let $\{x_1,...x_n\}$ be the set of critical points of $F$.
Recall the following formula for the calculation of the Euler characteristic
\[
	\chi(M)=\sum_{i=1}^n ind_{x_i}(grad F),
\]
where $ind_{x_i}(grad F)$ is the standard index of a critical point of the vector field $gradF$.
Observe that when $F$ is a Morse function, $ind_{x_k}(gradF)$ is $+1$ for local maximum and minimum points, and $-1$ for a saddle point.
Denote by $p$ the number of local maximum and minimum points, and by $q$ the number of saddle points.
Then by the above observation:
\[
	\chi(M)=p-q.
\]
From the definition of the Reeb graph, the number of vertices in $\Gamma$ is equal to the number of critical points in $\BM$, therefore:
\[
\#V = p+q.
\]
Furthermore, the degree of each vertex associated with a local maximum or minimum point is $1$,
and the degree of each vertex associated with a saddle point is $3$.
Hence, the number of edges in $\Gamma$ is
\[
	\# E=\frac{1}{2}\sum_{v\in V}deg(v) = \frac{1}{2}(p+3q).
\]
We can now calculate the Euler characteristic of $\Gamma$:
\begin{eqnarray*}
	\chi(\Gamma)=\# V - \# E &=& \\
= p+q-\frac{1}{2}(p+3q) = \frac{1}{2}(p-q) = \frac{1}{2}\chi(M) &=& \frac{1}{2}(2-2g)=1-g.
\end{eqnarray*}
\end{proof}

\ftsection[\secsize]{Essential Critical Points}{sec:essential}

The notion of essential critical points is needed for the precise formulation of Py's second theorem mentioned in Section \ref{sec:CQM}.
After defining the term and stating  Py's theorem,
we will prove a statement regarding the cardinality of the set of essential critical points.
The proof will clarify the concept, and its methods will also be used in Section \ref{sec:pants} for the construction of the pair of pants decomposition.
Similar methods have been used in \cite{Cole}.
\\
\begin{defn}
Let $\Gamma$ be a connected graph.
Given a vertex $v$, $\Gamma\backslash\{v\}$ is a disjoint union of connected topological spaces $Y_1\bigsqcup Y_2\bigsqcup\dots\bigsqcup Y_d$.
Let $\overline{Y_i}:=Y_i\bigcup\{v\}$ be the subgraph of $\Gamma$, which is composed of $Y_i$ attached to the vertex $v$.
We will refer to $\{\overline{Y_1},\dots ,\overline{Y_d}\}$ as the \textdf{subgraphs associated with $v$}.
\end{defn}

\begin{defn}
A vertex $v$ is called \textdf{non essential} if either one of the subgraphs associated with it, $\{\overline{Y_1},\dots ,\overline{Y_d}\}$, is a tree, or $v$ is an endpoint.
\end{defn}

\begin{defn}
Let $F$ be a generic Morse function defined on a closed surface of genus $g\ge 2$, and denote by $\Gamma_F$ the Reeb graph of $F$.
Define the \textdf{set of essential critical points of F}, $\ECP_F$, to be the critical points of $F$ that correspond to essential vertices in $\Gamma_F$.
\end{defn}

We can now state Py's second result \cite{Py}.
Let $F:\BM\rightarrow\real$ be a generic Morse function, where $\BM$ is a closed oriented surface of genus $g\ge 2$ and of total area $2g-2$.
Let ${\mathcal F}$ be the space of smooth functions on $\BM$ which commute with $F$ (Definition \ref{def:commute}),
and $\mu$ is Py's Calabi quasi-morphism given in Theorem \ref{thm:pi1}.

\begin{thm}\label{thm:pi2}
For $H$ in ${\mathcal F}$
\[ \mu(\phi_H)=\int_{\BM}H\omega - \sum_{x\in\ECP_F}H(x) \]
where $\ECP_F$ is the set of essential critical points of $F$.
\end{thm}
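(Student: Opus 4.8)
The idea is to use two structural features of Py's quasi-morphism: its restriction to any abelian subgroup is a genuine homomorphism, and (Theorem~\ref{thm:pi1}) its restriction to $\Gamma_U$ coincides with the Calabi homomorphism whenever $U$ is diffeomorphic to a disc or to an annulus. First I would reduce the statement to a linear one. Any two elements of ${\mathcal F}$ Poisson-commute: if $\{F,H\}=\{F,H'\}=0$ then, away from the critical set, $sgrad H$ and $sgrad H'$ are both proportional to $sgrad F$, so $\{H,H'\}=\omega(sgrad H,sgrad H')=0$. Hence $\phi_{H+H'}=\phi_H\phi_{H'}$ and the diffeomorphisms $\phi_H$, $H\in{\mathcal F}$, generate an abelian subgroup of $\mbox{Ham}(\BM,\omega)$ on which $\mu$ is a homomorphism. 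Thus $H\mapsto\mu(\phi_H)$ is additive on ${\mathcal F}$; together with the homogeneity of $\mu$ and the $C^2$-continuity of Theorem~\ref{thm:cont_1} this makes it a continuous $\real$-linear functional on ${\mathcal F}$, and the same is true of $T(H):=\int_{\BM}H\omega-\sum_{x\in\ECP_F}H(x)$. It therefore suffices to check $\mu(\phi_H)=T(H)$ on a $C^2$-dense subset of ${\mathcal F}$.

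Since every $H\in{\mathcal F}$ is constant on the connected level curves of $F$, it descends to a function $H_\Gamma$ on the Reeb graph $\Gamma$. Using a partition of unity on $\Gamma$ one can write $H=\sum_k H_k$ with the $\phi_{H_k}$ pairwise commuting and each $H_k\in{\mathcal F}$ supported in the preimage of the interior of an edge, of the star of an extremal (maximum or minimum) vertex, or of the star of a saddle vertex. By Morse theory these preimages are, respectively, an open cylinder, a disc, and a neighborhood of a figure-eight, i.e. a pair of pants. For the first two types Theorem~\ref{thm:pi1} identifies $\mu(\phi_{H_k})$ with the Calabi invariant of $\phi_{H_k}$ on that annulus or disc, which by Theorem~\ref{thm:banyaga2} equals $\int_{\BM}H_k\omega$; this equals $T(H_k)$ since such an $H_k$ vanishes at every essential critical point of $F$ (these are saddles sitting at interior vertices of $\Gamma$). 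So everything reduces to Hamiltonians supported in a small pair of pants $P_x$ around the figure-eight of a single saddle $x$.

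If $x$ is a non-essential saddle, one of the three subgraphs of $\Gamma$ associated with the vertex $\pi_\Gamma(x)$ is a tree $\overline Y$. An induction over $\overline Y$ --- using that a pair of pants with two of its boundary circles capped off by discs is again a disc --- shows that the subsurface $D$ bounded by the corresponding boundary circle of $P_x$ on the side of $\overline Y$ is a disc. Capping that one circle of $P_x$ by $D$ yields an annulus $A=P_x\cup D$ containing $P_x$, so $\phi_{H_k}\in\Gamma_A$ for every $H_k\in{\mathcal F}$ supported in $P_x$; hence, by Theorems~\ref{thm:pi1} and~\ref{thm:banyaga2}, $\mu(\phi_{H_k})=\int_{\BM}H_k\omega=T(H_k)$ (here $x\notin\ECP_F$, so the value at $x$ does not enter $T$).

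The remaining case --- $x$ an essential saddle --- is the main obstacle: now no boundary circle of $P_x$ bounds a disc, the pair of pants is in general non-displaceable, and Theorem~\ref{thm:pi1} no longer determines $\mu$ on $\Gamma_{P_x}$. After splitting off the edge-interior bumps supported inside $P_x$ (which lie in annuli and are already handled), linearity in $H_k$ reduces the claim to a single model Hamiltonian, for instance the normalized ``tent'' $G\in{\mathcal F}$ with $G(x)=1$ and support in $P_x$, for which one must prove $\mu(\phi_G)=\int_{\BM}G\omega-1$. This is the heart of the matter and requires entering Py's construction of $\mu$, identifying the constant $-1$ as the topological contribution of the figure-eight at an essential vertex. (A partial check is available from within the present argument: applying all the preceding steps to the constant function $1$, together with $\mu(\phi_1)=\mu(Id)=0$ and $Vol(\BM)=2g-2$, forces $\#\ECP_F=2g-2$ and pins down the \emph{sum} of these constants to $-(2g-2)$; isolating the individual value $-1$ --- e.g. via a generic Morse function admitting a symmetry that permutes its essential saddles, followed by a connectedness argument in the space of generic Morse functions --- finishes the step.) Granting this, summing the contributions of the pieces $H_k$ and using $\sum_k H_k(x)=H(x)$ at every essential saddle gives $\mu(\phi_H)=\int_{\BM}H\omega-\sum_{x\in\ECP_F}H(x)$, and the general case follows by density and continuity.
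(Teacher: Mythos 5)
The paper does not prove Theorem~\ref{thm:pi2}: it is introduced as ``Py's second result'' and simply cited to~\cite{Py}, and the only thing Section~\ref{sec:essential} actually establishes is the count $\#\ECP_F=2g-2$. So there is no internal proof to compare against, and your sketch has to stand on its own.

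Your reduction is reasonable as far as it goes: $\Gamma$ is abelian, so $\mu$ restricts to a homomorphism; with homogeneity and the $C^2$-continuity of Theorem~\ref{thm:cont_1}, $H\mapsto\mu(\phi_H)$ becomes an $\real$-linear functional on ${\mathcal F}$; pushing $H$ down to the Reeb graph and using a partition of unity localizes the computation; and bumps near edge interiors, extremal vertices, and non-essential saddles are all taken care of by Theorem~\ref{thm:pi1}, since their supports sit inside discs or annuli. But, as you flag yourself, the argument bottoms out at a step none of these tools reach: computing $\mu(\phi_G)$ for a model ``tent'' $G$ supported in a pair of pants around an \emph{essential} saddle. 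Such a pair of pants is neither a disc nor an annulus nor, in general, displaceable, so Theorem~\ref{thm:pi1} is silent there, and the fallback ideas you sketch do not close the gap. Testing $H\equiv 1$ only constrains the \emph{sum} of the per-saddle constants (it just re-derives $\#\ECP_F=2g-2$). Not every generic Morse function carries a symmetry permuting its essential saddles, so that route covers at most special $F$. And a connectedness argument in the space of generic Morse functions must cross bifurcations at which the Reeb graph --- hence $\ECP_F$ and the commutant ${\mathcal F}$ itself --- jumps discontinuously, so continuity of the individual constants along such a path is exactly what would need to be proved, not something to invoke. Pinning the essential-saddle contribution to $-1$ genuinely requires opening Py's construction (the lift $\Theta$ to the unit tangent bundle and the rotation-number function $angle(H,-)$ recalled in Section~\ref{sec:cont}) and computing the translation number across a figure-eight. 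As written, your proposal is a clean reduction to that hard step, not a proof of the theorem.
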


In the rest of the section we will show that the number of essential critical points is equal to $2g-2$, where $g\ge 2$ is the genus of $\BM$.

\begin{lem}\label{lem:atleast}
Let $\Gamma$ be a connected graph with vertices of degree $1$ or $3$, such that $\chi(\Gamma)\le -1$.
Assume that $\Gamma$ has at least one vertex of degree 1.
Then $\Gamma$ has more than two vertices.
\end{lem}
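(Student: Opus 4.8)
The plan is to reduce the statement to a direct edge-count on $\Gamma$. Write $a$ for the number of vertices of degree $1$ and $b$ for the number of vertices of degree $3$, so that $\#V = a+b$. By the handshake lemma, $2\,\#E = \sum_{v\in V}\deg(v) = a+3b$, hence $\#E = \tfrac12(a+3b)$; in particular $a+3b$, and therefore $a+b$, is even. Consequently
\[
\chi(\Gamma) = \#V - \#E = (a+b) - \tfrac12(a+3b) = \tfrac12(a-b).
\]

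Next I would feed in the two hypotheses. The assumption $\chi(\Gamma)\le -1$ gives $a-b\le -2$, i.e. $b\ge a+2$. The assumption that $\Gamma$ has at least one vertex of degree $1$ gives $a\ge 1$, whence $b\ge 3$. Combining these, $\#V = a+b \ge 1+3 = 4 > 2$, which is the desired conclusion.

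There is essentially no obstacle here; the argument is purely a counting one. The only point worth a word of care is that $\Gamma$ may be a multigraph (the Reeb graph can carry loops and parallel edges), but both the handshake identity and the formula $\chi = \#V - \#E$ hold verbatim in that generality, so the computation is unaffected. I would also note in passing that the degree-$1$ hypothesis is genuinely used: the graph consisting of two vertices joined by three parallel edges has all vertices of degree $3$ and $\chi = -1$, yet only two vertices, so the conclusion would fail without it.
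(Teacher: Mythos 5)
Your proof is correct, and it takes a genuinely different and cleaner route than the paper's. The paper argues by contradiction with a case analysis: it supposes $\Gamma$ has exactly two vertices, enumerates the (two) possible connected graphs with one degree-$1$ vertex and one vertex of degree $1$ or $3$ (a single edge, or an edge plus a loop), computes $\chi = 1$ and $\chi = 0$ respectively, and observes both contradict $\chi \le -1$. You instead compute globally: letting $a$ and $b$ count the degree-$1$ and degree-$3$ vertices, the handshake lemma gives $\chi(\Gamma) = \tfrac12(a-b)$, so $\chi \le -1$ forces $b \ge a+2$, and $a \ge 1$ then forces $\#V = a + b \ge 4$. Your argument buys a sharper conclusion (at least four vertices, not merely more than two) and avoids case enumeration entirely, which makes it more robust; the paper's approach is perhaps more concrete for a reader who wants to see the small cases. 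Your remark that the argument survives multigraphs (loops and parallel edges) is apt, since $\chi = \#V - \#E$ and the handshake identity hold in that generality, and your three-parallel-edges example neatly shows the degree-$1$ hypothesis is necessary.
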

\begin{proof}
The assumption that $\Gamma$ has at least one vertex of degree $1$ implies that there are at least two vertices in $\Gamma$.
Assume that there are only two vertices in $\Gamma$ and denote them by $v_1$ and $v_2$.
Up to graph isomorphism, there are only two connected graphs with one vertex of degree $1$ and the other of degree $1$ or $3$.
One graph is simply $v_1,v_2$ and an edge connecting them, and the other has an extra edge connected on both ends to one of the vertices.
The Euler characteristic of the first graph is $1$, and of the second is $0$.
But we assume that $\chi(\Gamma)\le -1$, hence $\Gamma$ has more than two vertices.
\end{proof}

\textdf{Construction algorithm.}
Let $\Gamma$ be a connected graph with vertices of degree $1$ or $3$, such that $\chi(\Gamma)\le -1$.
Assume that there is at least one vertex of degree $1$.
We will define a new graph $\Gamma'$ obtained from $\Gamma$ in the following procedure.
Choose a vertex $v_1$ of degree $1$.
We will denote by $e$ the edge adjacent to $v_1$ and by $v_2$ the vertex on the other end of $e$.
The degree of $v_2$ is either $1$ or $3$. But if the degree is $1$,
it implies that $\Gamma$ has only two vertices contradicting Lemma \ref{lem:atleast}.
Therefore, the degree of $v_2$ is $3$.
Remove the vertex $v_1$ along with the edge $e$.
The degree of $v_2$ is now $2$.
Note that if $v_2$ is adjacent to both ends of the same edge,
it implies that $\Gamma$ has only two vertices contradicting Lemma \ref{lem:atleast}.
Therefore $v_2$ is adjacent to two different edges.
Remove the vertex $v_2$ and replace the two edges adjacent to it with one edge.
The new graph is not empty since $\Gamma$ has more than two vertices and we removed only two vertices.
Define $\Gamma'$ to be the new graph.

Note that $\Gamma '$ is not uniquely defined since the endpoint to be removed can be chosen arbitrarily.

\begin{lem}
$\Gamma '$ is a deformation retract of $\Gamma$.
\end{lem}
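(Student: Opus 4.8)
The plan is to show that the two vertices and two edges removed in the construction algorithm are deleted via elementary collapses, so that $\Gamma'$ sits inside $\Gamma$ as a subspace onto which $\Gamma$ deformation retracts. Recall that the algorithm picks a degree-one vertex $v_1$ with adjacent edge $e$ whose other endpoint is $v_2$, deletes $v_1$ and $e$, and then deletes $v_2$ (now of degree two, and by Lemma~\ref{lem:atleast} adjacent to two distinct edges $e_1,e_2$) by merging $e_1$ and $e_2$ into a single edge. I will treat $\Gamma$ as a one-dimensional CW-complex (a topological graph), so that a ``deformation retract'' has its usual meaning.

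First I would handle the removal of $v_1$ and $e$. The half-open edge $e\setminus\{v_2\}$ together with $v_1$ is a free face: $v_1$ has no other incident edge, so nothing else in $\Gamma$ touches the interior of $e$ or the point $v_1$. Hence there is an explicit strong deformation retraction of $e$ (a copy of $[0,1]$ with $0\leftrightarrow v_1$, $1\leftrightarrow v_2$) onto $\{v_2\}$, namely $r_t(s) = (1-t)s + t$, and extending by the identity on the rest of $\Gamma$ gives a strong deformation retraction of $\Gamma$ onto the subgraph $\Gamma_1 := \Gamma\setminus(\{v_1\}\cup \mathrm{int}\,e)$; this is continuous because the edge is glued to $\Gamma_1$ only along $\{v_2\}$, which is fixed throughout the homotopy. (Continuity of the extension is the usual gluing-lemma argument for CW-pairs.)

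Next, in $\Gamma_1$ the vertex $v_2$ has degree two, incident to distinct edges $e_1$ and $e_2$; call their other endpoints $w_1$ and $w_2$ (possibly equal). The union $e_1\cup e_2$ is an arc from $w_1$ to $w_2$ passing through the interior vertex $v_2$, and $\Gamma_1$ meets the interior of this arc (i.e. $e_1\cup e_2$ minus $\{w_1,w_2\}$) in nothing except along the two endpoints $w_1,w_2$. Subdividing/identifying this arc with the single edge of $\Gamma'$ corresponding to $e_1 * e_2$ is a homeomorphism, so $\Gamma'$ is homeomorphic to the graph obtained from $\Gamma_1$ by this edge-merge; this is not merely a deformation retract but an actual homeomorphism (it is just a choice of CW-structure on the same underlying space, merging two edges across a valence-two vertex). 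Composing: $\Gamma$ strong-deformation-retracts onto $\Gamma_1$, and $\Gamma_1\cong\Gamma'$, so $\Gamma'$ is a deformation retract of $\Gamma$, as claimed.

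The only point requiring care — the ``main obstacle,'' such as it is — is making sure the edge $e$ removed in the first step is genuinely a free face, i.e. that no other structure of $\Gamma$ is attached to $v_1$ or to the interior of $e$; this is exactly where $\deg(v_1)=1$ is used, and the analogous point in the second step (that $v_2$ is adjacent to two \emph{different} edges rather than both ends of a loop) is precisely what Lemma~\ref{lem:atleast} and the hypothesis $\chi(\Gamma)\le -1$ were established for in the construction algorithm. Everything else is the routine gluing-lemma verification that the piecewise-defined homotopy is continuous, which I would not spell out in full.
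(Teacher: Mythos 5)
Your proposal is correct and takes essentially the same approach as the paper, which simply observes that $\Gamma'$ is obtained from $\Gamma$ by contracting a pendant edge to a point and hence is a deformation retract; you fill in the CW-level details (the free-face collapse of the pendant edge onto $v_2$, followed by the homeomorphism that merges the two edges at the resulting valence-two vertex) that the paper leaves implicit.
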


\begin{proof}
From the topological point of view, $\Gamma '$ is obtained from $\Gamma$ by contracting a line segment to a point.
Hence $\Gamma '$ is a deformation retract of $\Gamma$.
\end{proof}

\begin{cor}\label{cor:euler}
$\chi(\Gamma)=\chi(\Gamma')$ since the Euler characteristic is a topological invariant.
\end{cor}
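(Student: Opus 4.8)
The plan is to deduce the corollary immediately from the preceding lemma, which asserts that $\Gamma'$ is a deformation retract of $\Gamma$. Since a deformation retraction is in particular a homotopy equivalence, $\Gamma$ and $\Gamma'$ are homotopy equivalent; and the Euler characteristic of a finite graph (a compact CW complex) is a homotopy invariant, being the alternating sum of Betti numbers $b_0 - b_1$. Hence $\chi(\Gamma) = \chi(\Gamma')$. This is the one-line argument already indicated in the statement, and no further work is strictly needed.

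If one prefers a self-contained combinatorial check that does not invoke topological invariance, I would instead track the effect of the construction algorithm on the counting formula $\chi = \#V - \#E$ directly. The algorithm performs two operations. First it deletes the degree-one vertex $v_1$ together with its unique incident edge $e$; this decreases $\#V$ by $1$ and $\#E$ by $1$, so $\#V - \#E$ is unchanged. Then it deletes the vertex $v_2$, which after the first step has degree $2$ and (as argued in the construction) is incident to two distinct edges, and replaces those two edges by a single edge joining their other endpoints; this again decreases $\#V$ by $1$ and $\#E$ by $1$, so $\#V - \#E$ is once more unchanged. Therefore $\chi(\Gamma) = \#V(\Gamma) - \#E(\Gamma) = \#V(\Gamma') - \#E(\Gamma') = \chi(\Gamma')$.

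There is essentially no obstacle here; the only point requiring a modicum of care is the bookkeeping in the second operation, namely that merging the two edges incident to the bivalent vertex $v_2$ removes exactly one edge (two become one) rather than two, and that this merge is legitimate precisely because $v_2$ is not incident to both ends of a single edge, a situation already excluded in the construction via Lemma \ref{lem:atleast}. With that observed, the equality $\chi(\Gamma) = \chi(\Gamma')$ follows, and it will be used in the sequel to iterate the construction while retaining the hypothesis $\chi \le -1$ at each stage.
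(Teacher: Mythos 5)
Your first paragraph is exactly the paper's argument: the preceding lemma gives a deformation retraction, hence a homotopy equivalence, and Euler characteristic is a homotopy invariant. The alternative combinatorial count you add is a correct and self-contained cross-check, but it is not needed and not what the paper does; the main argument matches.
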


\begin{lem}\label{lem:one_three}
The graph $\Gamma '$ is connected with vertices of degree $1$ or $3$.
\end{lem}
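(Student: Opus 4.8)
The plan is to handle the two assertions separately: connectedness is essentially free from the previous lemma, while the degree condition reduces to a short case analysis of how the construction algorithm alters the edges incident to each surviving vertex.

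For connectedness I would simply invoke the preceding lemma together with Corollary \ref{cor:euler}: $\Gamma'$ is a deformation retract of $\Gamma$, and $\Gamma$ is connected by hypothesis, so $\Gamma'$ is connected. (One can also argue directly: deleting the degree-$1$ vertex $v_1$ together with its unique edge $e$ cannot disconnect a connected graph, since a degree-$1$ vertex is never an interior vertex of a path; and the subsequent smoothing of the degree-$2$ vertex $v_2$ leaves the underlying topological realization unchanged, hence preserves connectedness.)

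For the degrees the key observation is that $V(\Gamma') = V(\Gamma)\setminus\{v_1,v_2\}$, so it suffices to show that every vertex $w\notin\{v_1,v_2\}$ has the same degree in $\Gamma'$ as in $\Gamma$. Let $e_1,e_2$ be the two edges incident to $v_2$ other than $e$; by the steps of the algorithm they are distinct and are neither equal to $e$ nor loops at $v_2$, so each has a well-defined endpoint $w_1,w_2$ distinct from $v_2$. The algorithm deletes $e,e_1,e_2$ and adds a single new edge $e'$ joining $w_1$ and $w_2$. First I would check $w_1,w_2\notin\{v_1,v_2\}$: neither equals $v_2$ since $e_1,e_2$ are not loops, and neither equals $v_1$ because the only edge at $v_1$ is $e\neq e_1,e_2$; hence $w_1,w_2$ are genuine vertices of $\Gamma'$. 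Now fix $w\notin\{v_1,v_2\}$. If $w$ is incident to neither $e_1$ nor $e_2$, then none of the deleted or added edges touches $w$ and its degree is unchanged. If $w$ is incident to exactly one of $e_1,e_2$, it loses that one edge and gains exactly the one new edge $e'$, so its degree is unchanged. Finally, if $w$ is incident to both $e_1$ and $e_2$, i.e. $w_1=w_2=w$, then $w$ loses two edges and gains the loop $e'$, which contributes $2$ to its degree, so again the degree is unchanged. Therefore every vertex of $\Gamma'$ has degree $1$ or $3$.

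The only subtlety, and the step I would be most careful about, is precisely this degenerate case analysis: because Reeb graphs naturally carry loops and parallel edges, one must verify that the smoothing of $v_2$ behaves correctly when $e_1$ and $e_2$ share their far endpoint (producing a loop at that vertex) and must confirm that $w_1,w_2$ survive into $\Gamma'$. Once these bookkeeping points are dispatched, both claims follow immediately.
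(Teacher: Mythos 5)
Your proof is correct and takes essentially the same approach as the paper. The paper merely asserts the degree claim in one line while you carefully dispatch the degenerate parallel-edge/loop cases, and your connectedness argument via the deformation-retract lemma is interchangeable with the paper's direct path-modification argument.
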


\begin{proof}
In the construction of $\Gamma'$, apart from the removed vertices $v_1$ and $v_2$,
the rest of the vertices have the same degree as in $\Gamma$.
Therefore the vertices in $\Gamma'$ are of degree $1$ or $3$.
\par
Let $v'$,$v''$ be any two vertices in $\Gamma'$.
Since $\Gamma$ is connected, there exists a path in $\Gamma$ between $v'$ and $v''$.
The vertex $v_1$ has degree $1$, so obviously the path can be chosen not to pass through $v_1$.
If the path passes through $v_2$ in $\Gamma$, then in $\Gamma'$ it will pass through the new edge that replaced $v_2$ and its two adjacent edges.
Hence $\Gamma'$ is also connected.
\end{proof}

\begin{lem}\label{lem:essential_iff}
Let $v\in\Gamma$ be an essential vertex, then $v\in\Gamma '$ and $v$ is essential in $\Gamma '$.
The opposite also holds, if $v\in\Gamma '$ is essential in $\Gamma '$ then $v$ is essential in $\Gamma$.
\end{lem}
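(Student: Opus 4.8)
The plan is to establish Lemma \ref{lem:essential_iff} by carefully tracking how the construction algorithm modifies the subgraphs associated with each vertex. Recall that the algorithm removes a degree-1 vertex $v_1$, its adjacent edge $e$, and the (necessarily degree-3) vertex $v_2$ on the other end of $e$, then merges the two remaining edges at $v_2$ into a single edge. So the only vertices affected are $v_1$ and $v_2$; every other vertex $v$ of $\Gamma$ survives into $\Gamma'$. The essence of the argument is: for such a surviving vertex $v$, the subgraphs $\{\overline{Y_1},\dots,\overline{Y_d}\}$ associated with $v$ in $\Gamma$ are turned into the subgraphs associated with $v$ in $\Gamma'$ by deleting the pendant path $v_1$--$e$--$v_2$ (and merging at $v_2$) inside whichever one of them contains it, while the others are left untouched.

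First I would dispose of the easy direction. Suppose $v\in\Gamma$ is essential; in particular $v\neq v_1$, since $v_1$ has degree $1$ and is therefore non-essential by definition. I also claim $v\neq v_2$: indeed, one of the subgraphs associated with $v_2$ is just $\overline{\{v_1\}}=$ the edge $e$ together with its endpoints, which is a tree, so $v_2$ is non-essential. Hence $v$ survives into $\Gamma'$. Now I must show $v$ remains essential in $\Gamma'$, i.e. none of its associated subgraphs becomes a tree and $v$ does not become an endpoint. Since $v\neq v_2$, the degree of $v$ is unchanged, so $v$ is still not an endpoint. The subgraph modification described above changes exactly one $\overline{Y_i}$, call it $\overline{Y_{i_0}}$, by removing the pendant path at a degree-3 vertex $v_2$ lying in its interior (possibly $v_2$ is a vertex of $\overline{Y_{i_0}}$, possibly not — but $v_1,e$ lie in $Y_{i_0}$). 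Removing a pendant vertex and smoothing a resulting degree-2 vertex is a deformation retract, so it does not change the homotopy type; in particular it does not turn a non-tree into a tree. The other $\overline{Y_i}$ are literally unchanged. Hence all associated subgraphs of $v$ in $\Gamma'$ are still non-trees, so $v$ is essential in $\Gamma'$.

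For the converse, suppose $v\in\Gamma'$ is essential in $\Gamma'$. Again $v$ is a vertex of $\Gamma$ (it is one of the surviving vertices), and $v\neq v_1,v_2$. Running the same correspondence backwards: the subgraphs associated with $v$ in $\Gamma$ are obtained from those in $\Gamma'$ by re-attaching the pendant path $v_1$--$e$--$v_2$ at the appropriate interior point (reversing the edge-merge and reinserting $v_2$, then hanging $e$ and $v_1$ off it). Re-attaching a pendant path is again a deformation retract in reverse, so each $\overline{Y_i}$ in $\Gamma$ is homotopy equivalent to the corresponding one in $\Gamma'$; since the latter are non-trees, so are the former. And the degree of $v$ is still unchanged, so $v$ is not an endpoint of $\Gamma$. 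Therefore $v$ is essential in $\Gamma$.

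The main obstacle, and the only point that needs genuine care, is the bookkeeping of which associated subgraph the pendant path $v_1$--$e$--$v_2$ lives in and the verification that the local surgery at $v_2$ (delete a pendant edge, then suppress the resulting valence-2 vertex) is performed entirely inside a single $\overline{Y_i}$ and is indeed a homotopy equivalence — this is precisely where I would invoke the already-established fact (Corollary \ref{cor:euler} and the preceding deformation-retract lemma) that the algorithm realizes a deformation retract, restricted now to the relevant subgraph rather than all of $\Gamma$. One subtlety to flag: if $v_2$ happens to be a cut vertex whose removal in $\Gamma$ separates $\Gamma$ into more pieces than the surgery suggests, one should check the smoothing step does not secretly merge two associated subgraphs of $v$; but since $v\neq v_2$ and the two edges merged at $v_2$ both lie on the same side of $v$ (they are both in the component $Y_{i_0}$ of $\Gamma\setminus\{v\}$ that contains $v_1$), this cannot happen. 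Once that is checked, the homotopy-invariance argument closes both directions symmetrically.
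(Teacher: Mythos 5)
Your proof is correct and follows essentially the same route as the paper's: first, the two removed vertices $v_1$ and $v_2$ are identified as non-essential (so every essential vertex survives into $\Gamma'$), and then the key observation is that the surgery does not change whether any subgraph associated with a surviving vertex is a tree, nor whether that vertex is an endpoint. The only difference is cosmetic: where you invoke homotopy invariance (the surgery restricted to the relevant $\overline{Y_{i_0}}$ is a deformation retract, hence preserves non-contractibility), the paper phrases the same fact combinatorially, noting that attaching or deleting a free pendant edge neither creates nor destroys a cycle and creates no new endpoints; your ``subtlety to flag'' about the two merged edges lying in a single $\overline{Y_{i_0}}$ is implicitly handled by the paper's phrasing as well.
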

\begin{proof}
Let $v\in \Gamma$ be an essential vertex.
The vertices that can be removed in the process of constructing $\Gamma '$ are either endpoints, or vertices that are connected via an edge to an endpoint.
The vertex $v$ is essential, hence it can not be an endpoint.
Furthermore, if $v$ is connected via an edge to an endpoint,
then there exists a subgraph associated with $v$ which is a tree, namely, it is the subgraph that contains the endpoint and $v$.
Hence, we get a contradiction to the fact that $v$ is essential.
We conclude that $v\in \Gamma '$.\\
Assume that $v$ is not essential in $\Gamma '$.
In the construction of $\Gamma '$, no new endpoints are created relative to those in $\Gamma$.
Now, $v$ is not an endpoint in $\Gamma$, hence it is not an endpoint in $\Gamma '$.
If one of the subgraphs associated with $v$ is a tree in $\Gamma '$, then it is also a tree in $\Gamma$,
since the addition of a free edge does not create a cycle.
But $v$ is essential in $\Gamma$, hence we have a contradiction, and $v$ is indeed essential in $\Gamma '$.\\
Conversely, let $v$ be essential in $\Gamma '$.
The vertices of $\Gamma '$ are contained in those of $\Gamma$, so obviously $v\in \Gamma$.
The vertex $v$ is not an endpoint in $\Gamma '$ so in particular it is not an endpoint in $\Gamma$.
The subgraphs associated with $v$ in $\Gamma '$ are all not trees, and the addition of a free edge does not change this property in $\Gamma$.
Hence $v$ is essential in $\Gamma$.
\end{proof}

\begin{figure}[h]
\centering
\includegraphics{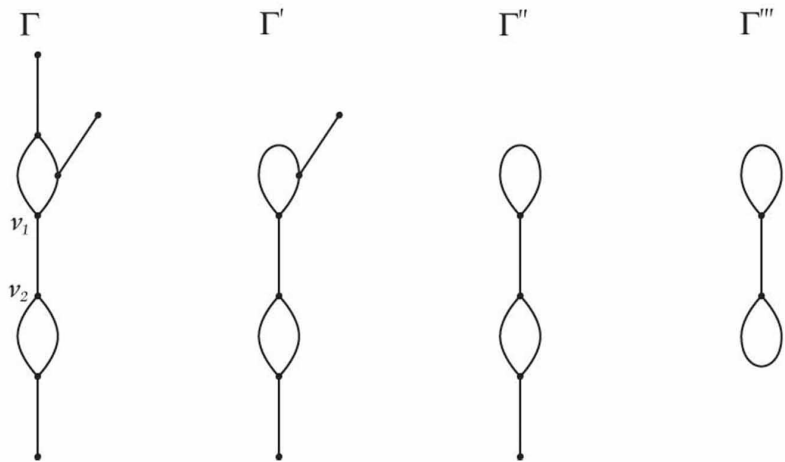}
\caption{The essential vertices of $\Gamma$ are $\{v_1,v_2\}$. $\Gamma'''$ has only vertices of degree 3, which are all essential.}
\label{fig:gamma}
\end{figure}

\begin{defn}\label{def:tilde_gamma}
By Corollary \ref{cor:euler} and Lemma \ref{lem:one_three} we can apply the above algorithm on $\Gamma$ recursively until there are no more vertices of degree $1$.
Denote the new graph by $\widetilde{\Gamma}$.
By Lemma \ref{lem:one_three}, $\widetilde{\Gamma}$ only has vertices of degree $3$ (see Figure \ref{fig:gamma}).
\end{defn}

\begin{prop}
Let $\Gamma$ be a connected graph such that all vertices are of degree $3$.
Then all vertices of $\Gamma$ are essential.
\end{prop}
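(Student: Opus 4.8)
The plan is a direct degree count. Fix an arbitrary vertex $v$ of $\Gamma$. Since every vertex of $\Gamma$ has degree $3$, $v$ is not an endpoint, so by the definition of a non-essential vertex it suffices to show that none of the subgraphs $\overline{Y_1},\dots,\overline{Y_d}$ associated with $v$ is a tree.

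Suppose, for contradiction, that one of them, say $\overline{Y}:=\overline{Y_i}=Y_i\cup\{v\}$, is a tree, and write $m:=\#Y_i$. First I would record two facts. (i) $Y_i$ is a (necessarily nonempty) connected component of $\Gamma\setminus\{v\}$, so $m\ge 1$; moreover, since $\Gamma$ is connected, $v$ must be joined by at least one edge to $Y_i$, so the number $k$ of edges of $\overline{Y}$ meeting $v$ satisfies $k\ge 1$. (ii) Every vertex $w\in Y_i$ has all of its $\Gamma$-neighbours inside $Y_i\cup\{v\}$ — a neighbour of $w$ other than $v$ lies in the same component of $\Gamma\setminus\{v\}$ as $w$, namely in $Y_i$ — hence $\deg_{\overline{Y}}(w)=\deg_\Gamma(w)=3$. (In a tree there are no loops or multiple edges, so this bookkeeping is unambiguous; in any case a loop or a double edge incident to a vertex of $Y_i$ would already produce a cycle and contradict $\overline{Y}$ being a tree.)

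Now apply the handshake identity to $\overline{Y}$. As a tree on $m+1$ vertices it has exactly $m$ edges, so $\sum_{u\in\overline{Y}}\deg_{\overline{Y}}(u)=2m$. On the other hand, by (ii) the $m$ vertices of $Y_i$ each contribute $3$ and $v$ contributes $k$, giving $3m+k=2m$, i.e. $k=-m<0$, contradicting $k\ge 1$. Therefore no $\overline{Y_i}$ is a tree, $v$ is essential, and since $v$ was arbitrary, every vertex of $\Gamma$ is essential.

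I do not expect a genuine obstacle here: the argument is just the relation $\#E=\#V-1$ for a finite tree combined with $3$-regularity. The only points needing care are the precise reading of $\overline{Y_i}$ (the subgraph of $\Gamma$ on vertex set $Y_i\cup\{v\}$ together with the edges joining $v$ to $Y_i$) and the degenerate cases — a one-vertex $\Gamma$ cannot occur, since a $3$-regular graph on a single vertex would consist only of loops and have odd total degree, and any loops or parallel edges meeting $Y_i$ are excluded the instant we assume $\overline{Y_i}$ is a tree.
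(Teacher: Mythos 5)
Your proof is correct and uses essentially the same idea as the paper's: the crucial observation in both is that every vertex of $Y_i$ retains its full degree $3$ inside $\overline{Y_i}$, so the only possible low-degree vertex of the subgraph is $v$. The paper then cites the fact that a non-trivial tree has at least two leaves, whereas you derive the same contradiction by combining the handshake identity with $\#E=\#V-1$; these are interchangeable forms of the same basic tree fact, so the arguments are in substance identical.
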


\begin{proof}
Let $v\in\Gamma$.
Obviously $v$ can't be an endpoint since its degree is $3$.
Consider the subgraphs associated with $v$,  $\{\overline{Y_1},\dots ,\overline{Y_d}\}$.
Each subgraph has at most one vertex of degree $1$ (Namely, the vertex $v$).
But a non-trivial tree must have at least two vertices of degree $1$.
Therefore $v$ is essential.
\end{proof}

\begin{prop}\label{prop:num_essen}
Let $\Gamma$ be a connected graph with vertices of degree $1$ or $3$, such that $\chi(\Gamma)\le~-1$.
Then the number of essential vertices in $\Gamma$ is equal to $-2\chi(\Gamma)$.
\end{prop}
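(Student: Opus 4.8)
The plan is to reduce the statement to the trivalent case via the construction algorithm and then finish with an Euler-characteristic count. First I would apply the construction algorithm to $\Gamma$ repeatedly, as in Definition \ref{def:tilde_gamma}, producing the graph $\widetilde{\Gamma}$. At every stage the graph stays connected with vertices of degree $1$ or $3$ (Lemma \ref{lem:one_three}) and its Euler characteristic is unchanged (Corollary \ref{cor:euler}), so it remains $\le -1$; in particular Lemma \ref{lem:atleast} keeps applying, so each step of the algorithm is legitimate, and since each step deletes exactly two vertices the process terminates after finitely many steps. The terminal graph $\widetilde{\Gamma}$ has no vertex of degree $1$, hence (degrees lying in $\{1,3\}$) only vertices of degree $3$, and it satisfies $\chi(\widetilde{\Gamma}) = \chi(\Gamma) \le -1$, so it is in particular non-empty.

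Next I would identify the essential vertices of $\Gamma$ with the vertices of $\widetilde{\Gamma}$. Applying Lemma \ref{lem:essential_iff} inductively along the chain $\Gamma \to \Gamma' \to \cdots \to \widetilde{\Gamma}$, a vertex of $\Gamma$ is essential if and only if it survives into $\widetilde{\Gamma}$ and is essential there; and since the algorithm only deletes vertices and never merges them, distinct surviving vertices remain distinct. By the proposition immediately preceding, every vertex of the trivalent graph $\widetilde{\Gamma}$ is essential. Hence the number of essential vertices of $\Gamma$ equals the total number of vertices $\#V(\widetilde{\Gamma})$.

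Finally I would compute $\#V(\widetilde{\Gamma})$ from $\chi(\widetilde{\Gamma})$. Writing $n = \#V(\widetilde{\Gamma})$, the handshake lemma gives $\#E(\widetilde{\Gamma}) = \tfrac{1}{2}\sum_{v}\deg(v) = \tfrac{3n}{2}$, so
\[
\chi(\widetilde{\Gamma}) = \#V(\widetilde{\Gamma}) - \#E(\widetilde{\Gamma}) = n - \frac{3n}{2} = -\frac{n}{2},
\]
whence $n = -2\chi(\widetilde{\Gamma}) = -2\chi(\Gamma)$. Combined with the previous paragraph, the number of essential vertices of $\Gamma$ is $-2\chi(\Gamma)$, as claimed.

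I expect the only point that really needs care is the bookkeeping in the first paragraph: one must check that the hypotheses needed to run one more step of the algorithm — connectedness, degrees in $\{1,3\}$, and $\chi \le -1$ — are genuinely preserved at each iteration, which is exactly what Lemma \ref{lem:one_three} and Corollary \ref{cor:euler} guarantee, and that the stopping condition "no vertices of degree $1$" together with the degree constraint forces the terminal graph to be trivalent (and non-empty, since $\chi \le -1$). Everything else is a direct appeal to results already established or the one-line degree count above.
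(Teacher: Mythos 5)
Your proof is correct and follows essentially the same route as the paper's: reduce to the trivalent graph $\widetilde{\Gamma}$ via the construction algorithm, using Corollary \ref{cor:euler} and Lemma \ref{lem:essential_iff} to preserve the Euler characteristic and the set of essential vertices, then count via the handshake lemma. The only difference is cosmetic — you are more careful in spelling out why the algorithm's hypotheses persist at each stage, and you correctly write $\#E=\tfrac{3}{2}\#V$ where the paper has a typo ($\#V=\tfrac{3}{2}\#E$), though its subsequent computation is correct.
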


\begin{proof}
Using Corollary \ref{cor:euler} we get by induction $\chi(\Gamma)=\chi(\widetilde{\Gamma})$.
By Lemma \ref{lem:essential_iff}, we can see that $\Gamma$ and $\widetilde{\Gamma}$ have the same essential vertices.
Consequently, it is enough to prove the claim for $\widetilde{\Gamma}$.
Let $\#V$ and $\#E$ be the number of vertices and edges in $\widetilde{\Gamma}$, respectively.
Recall that the Euler characteristic of a graph is equal to $\#V-\#E$.
By Definition \ref{def:tilde_gamma}  $\widetilde{\Gamma}$ has only vertices of degree $3$.
Since every vertex is adjacent to three edges, and each edge is adjacent to two vertices, we get
\[ \#V=\frac{3}{2}\#E .\]
Thus
\[ \chi(\widetilde{\Gamma})=\#V-\#E=-\frac{1}{2}\#V \]
and
\[ \#V = -2\chi(\widetilde{\Gamma}) \]
as required.
\end{proof}

\begin{cor}
Let $\Gamma_F$ be the the Reeb graph of a generic Morse function $F$, defined on a closed surface $\BM$ of genus $g\ge 2$.
Then the number of essential vertices in $\Gamma_F$ is equal to $2g-2$.
\end{cor}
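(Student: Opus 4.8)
The plan is to derive this corollary directly from the two main propositions already proved in this section and the previous one, so no new machinery is required. First I would check that the Reeb graph $\Gamma_F$ satisfies all the structural hypotheses of Proposition~\ref{prop:num_essen}. Connectedness is immediate: $\BM$ is a closed oriented surface and hence connected, and $\Gamma_F$ is its image under the continuous projection $\pi_\Gamma$, so it too is connected. The degree condition — every vertex has degree $1$ or $3$ — was already observed in the proof of Proposition~\ref{prop:graph_euler}: a vertex coming from a local maximum or minimum of a generic Morse function bounds exactly one cylinder and so has degree $1$, while a vertex coming from a saddle bounds three cylinders and so has degree $3$.

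Next I would compute the Euler characteristic. By Proposition~\ref{prop:graph_euler} we have $\chi(\Gamma_F) = 1 - g$. Since the standing assumption is $g \ge 2$, this yields $\chi(\Gamma_F) = 1 - g \le -1$, which is precisely the last remaining hypothesis needed to apply Proposition~\ref{prop:num_essen} to $\Gamma_F$.

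Finally, applying Proposition~\ref{prop:num_essen}, the number of essential vertices of $\Gamma_F$ equals $-2\chi(\Gamma_F) = -2(1-g) = 2g-2$. Since, by definition, the essential critical points of $F$ are exactly those critical points whose associated vertices in $\Gamma_F$ are essential, it follows that $|\ECP_F| = 2g-2$, as claimed. The proof presents no real obstacle; the only point requiring a moment's attention is confirming that $\Gamma_F$ genuinely meets the hypotheses (connected, vertices of degree $1$ or $3$, and $\chi \le -1$) before invoking Proposition~\ref{prop:num_essen} — all the substantive work was done in Propositions~\ref{prop:graph_euler} and~\ref{prop:num_essen}.
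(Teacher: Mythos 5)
Your proof is correct and follows the paper's own argument exactly: invoke Proposition~\ref{prop:graph_euler} to get $\chi(\Gamma_F)=1-g\le -1$, then apply Proposition~\ref{prop:num_essen} to conclude the count is $-2\chi(\Gamma_F)=2g-2$. The only addition is your explicit verification that $\Gamma_F$ is connected with vertices of degree $1$ or $3$, which the paper leaves implicit; this is a reasonable bit of extra care, not a different route.
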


\begin{proof}
By Proposition \ref{prop:graph_euler} $\chi(\Gamma_F) = 1-g\le -1$.
Using Proposition \ref{prop:num_essen} the number of essential vertices in $\Gamma_F$ is equal to $-2\chi(\Gamma_F)$.

Therefore
\[ -2\chi(\Gamma_F) = -2(1-g) = 2g-2 \]
\end{proof}

\ftsection[\secsize]{The Pair of Pants Decomposition}{sec:pants}

The pair of pants decomposition is crucial for our proof of the monotonicity.
We will show here how to construct a pair of pants decomposition given a generic Morse function $F:\BM\rightarrow\real$.
\\
\par
Let $\ECP =\{x_1,x_2,...,x_{2g-2}\}$ be the set of essential critical points of $F$.
We will denote by $\{v_1,v_2,...,v_{2g-2}\}$ the set of essential vertices in $\Gamma$, the Reeb graph of $F$.
Let $F_{\Gamma}$ be the function on the graph $\Gamma$ induced by $F$.
Let $c_i=F(x_i)=F_{\Gamma}(v_i)$ for $i=1,...,2g-2$ be the critical values corresponding to the essential critical points.
Without loss of generality, we may assume that $c_1<c_2<...<c_{2g-2}$.
Choose small enough $\epsilon>0$ so that $[c_i-\epsilon,c_i+\epsilon]$ will only contain the critical value $c_i$.
\\
\begin{prop}
Denote by $(F_{\Gamma}^{-1}(c_i-\epsilon, c_i+\epsilon))_{v_i}$ the connected component of $F_{\Gamma}^{-1}(c_i-\epsilon, c_i+\epsilon)$ that contains the vertex $v_i$.
\\Then $\Gamma\backslash\bigcup_{i}(F_{\Gamma}^{-1}(c_i-\epsilon, c_i+\epsilon))_{v_i}$ is a disjoint union of trees, such that each tree has precisely two endpoints removed.
\end{prop}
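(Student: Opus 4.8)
The plan is to analyze the graph obtained by deleting, around each essential vertex $v_i$, the connected component of a small preimage neighborhood in the Reeb graph. First I would observe that since $\epsilon$ is chosen so small that $[c_i-\epsilon,c_i+\epsilon]$ contains only the critical value $c_i$, the component $(F_\Gamma^{-1}(c_i-\epsilon,c_i+\epsilon))_{v_i}$ is a small "star" around $v_i$: it consists of $v_i$ together with short open initial segments of each of the (exactly three) edges incident to $v_i$, and nothing else — in particular it contains no other vertex, since any other vertex would contribute its own critical value to the interval. Thus deleting all these stars is, up to homeomorphism, the same as deleting the essential vertices $v_1,\dots,v_{2g-2}$ from $\Gamma$ and keeping the non-essential part; equivalently one may pass to $\widetilde\Gamma$ and delete all of its vertices, since by Lemma \ref{lem:essential_iff} the essential vertices are exactly the vertices of $\widetilde\Gamma$ and $\widetilde\Gamma$ is obtained from $\Gamma$ by contracting segments that do not meet essential vertices.

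Next I would establish the two claims separately. For "disjoint union of trees": let $T$ be a connected component of $\Gamma\setminus\bigcup_i(F_\Gamma^{-1}(c_i-\epsilon,c_i+\epsilon))_{v_i}$. If $T$ contained a cycle, that cycle would avoid every essential vertex (the $v_i$ were all deleted), so it lies entirely in the non-essential part of $\Gamma$. But a non-essential vertex $v$ has, by definition, some subgraph associated with it that is a tree, or is an endpoint — and more to the point, in the recursive construction of $\widetilde\Gamma$ every non-essential vertex is eventually removed by collapsing tree-like hair, so the union of the closed non-essential edges and vertices contains no cycle: any cycle in $\Gamma$ survives into $\widetilde\Gamma$ (contracting a segment cannot kill a cycle) and hence must pass through a vertex of $\widetilde\Gamma$, i.e. through an essential vertex. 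This contradiction shows each $T$ is acyclic, hence a tree.

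For "each tree has precisely two endpoints removed": the endpoints of the closure $\overline T$ that were removed correspond exactly to the short edge-stubs where $T$ abuts a deleted star around some $v_i$. I would count these by a bijection with edges of $\widetilde\Gamma$. Contracting each component $T$ to a point recovers $\widetilde\Gamma$: the trees are precisely the "interiors" of the edges of $\widetilde\Gamma$ (an edge of $\widetilde\Gamma$ joining two essential vertices comes from a path in $\Gamma$ through non-essential vertices, i.e. from exactly one tree $T$, with its two ends attached to the two endpoints $v_i,v_j$). Since every edge of $\widetilde\Gamma$ has two distinct endpoints — here I would invoke the earlier argument, as in the construction algorithm, that a degree-$3$ vertex of $\widetilde\Gamma$ cannot be joined to itself by an edge, else $\chi\le -1$ fails on that component — each $T$ has exactly two removed endpoints, completing the proof. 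The main obstacle is the bookkeeping in the last step: making the correspondence "components $T$ $\leftrightarrow$ edges of $\widetilde\Gamma$" precise, and handling the degenerate possibility that $\Gamma$ is disconnected or that $\widetilde\Gamma$ has multi-edges or loops, which is exactly where the hypothesis $\chi(\Gamma)=1-g\le -1$ and Lemma \ref{lem:atleast} are needed.
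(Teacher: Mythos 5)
Your proposal takes a genuinely different route from the paper. The paper argues by \emph{reverse} induction along the contraction algorithm: for $\widetilde{\Gamma}$ the complement of the stars is manifestly a union of edge-middles, each with its two stubs deleted, and each reverse step of the algorithm simply grafts a pendant branch (at the non-essential vertices $v_1, v_2$, whose $F$-values lie outside every $[c_i-\epsilon, c_i+\epsilon]$ and hence outside every star) onto an existing tree $T$, leaving the count of removed endpoints at two. You instead contract all at once and set up a bijection between the components $T$ and the edges of $\widetilde{\Gamma}$. Your acyclicity argument --- a cycle in the complement would avoid every essential vertex, yet would persist under the retraction to $\widetilde{\Gamma}$, whose vertices are all essential --- is a clean and correct way to get the ``forest'' half of the claim, and is arguably tidier than the paper's induction for that part.

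There is, however, a genuine error in how you handle the ``two endpoints'' half. You propose to rule out self-loops in $\widetilde{\Gamma}$ by claiming that a cubic vertex with a self-loop would force $\chi > -1$ on its component. That is false: two vertices, each carrying a self-loop, joined by one edge, is a connected cubic graph with $\chi = 2-3 = -1$. And the situation is real: $\Gamma$ itself has no self-loops (Reeb-graph edges join vertices at distinct critical values), but the smoothing step of the algorithm that merges the two non-pendant edges at $v_2$ can create a self-loop at their common other endpoint, so $\widetilde{\Gamma}$ may well contain one. Fortunately you do not need to exclude them. If $e$ is a self-loop of $\widetilde{\Gamma}$ at an essential vertex $v$, the corresponding component $T$ still meets the star boundary in exactly two points --- both abutting the same star around $v$ --- and the statement only counts removed endpoints per tree, not distinct essential vertices touched. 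So the correct fix is to delete the ``no self-loops'' detour entirely; with that removed, and the $T\leftrightarrow\{\text{edges of }\widetilde{\Gamma}\}$ correspondence made precise (each $T$, together with the stub-ends it abuts, collapses under the algorithm to the interior of a single edge of $\widetilde{\Gamma}$, and conversely), your argument goes through. The paper's stepwise induction sidesteps this bookkeeping altogether, which is its main advantage.
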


\begin{proof}
Let $\widetilde{\Gamma}$ be the graph obtained from $\Gamma$ by applying the algorithm described above iteratively (Definition \ref{def:tilde_gamma}).
Recall that all vertices of $\widetilde{\Gamma}$ are essential and their number is $2g-2\ge 2$ for $g\ge 2$.
Hence $\widetilde{\Gamma}$ cannot contain only one vertex with no edges.\\
Note that  $\widetilde{\Gamma}\backslash\bigcup_{i}(F_{\widetilde{\Gamma}}^{-1}(c_i-\epsilon, c_i+\epsilon))_{v_i}$
is a collection of edges without the endpoints, which is of course also a collection of trees with exactly two endpoints removed.
We will use induction on the reverse steps of the algorithm to show that $\Gamma\backslash\bigcup_{i}(F_{\Gamma}^{-1}(c_i-\epsilon, c_i+\epsilon))_{v_i}$
is a collection of the required trees.
The base of the induction is the case of $\widetilde{\Gamma}$ which was shown above.
Denote by $\Gamma^{(n)}$ the graph obtained after the $n$-th iteration of the algorithm.
Assume that the claim holds for $\Gamma^{(n+1)}$ and we will show that it holds for $\Gamma^{(n)}$.
\\
$\Gamma^{(n+1)}\backslash\bigcup_{i}(F_{\Gamma^{(n+1)}}^{-1}(c_i-\epsilon, c_i+\epsilon))_{v_i}$
is a disjoint union of trees, such that each tree has precisely two endpoints removed.
In the reverse step of the algorithm we attach a free edge to one of the edges in the graph.
But an addition of a free edge to a tree is also a tree and there are still only two endpoints removed.
Hence $\Gamma^{(n)}\backslash\bigcup_{i}(F_{\Gamma^{(n)}}^{-1}(c_i-\epsilon, c_i+\epsilon))_{v_i}$
satisfies the inductive hypothesis.
Therefore the claim holds for $\Gamma=\Gamma^{(0)}$ as required.
\end{proof}

\begin{prop}\label{prop:cyl}
Denote by $(F^{-1}(c_i-\epsilon,c_i+\epsilon))_{x_i}$ the connected component of $F^{-1}(c_i-\epsilon,c_i+\epsilon)$ that contains $x_i$.\\
Then $\BM\backslash\bigcup_i(F^{-1}(c_i-\epsilon,c_i+\epsilon))_{x_i}$
is a disjoint union of cylinders with boundaries that corresponds to level sets of the form $F^{-1}(c_i\pm\epsilon)$ for $i=1,...,2g-2$.
\end{prop}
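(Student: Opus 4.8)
The plan is to pull the preceding proposition back from the Reeb graph $\Gamma$ to $\BM$ along the projection $\pi_\Gamma$, and then to pin down the topology of the resulting pieces by an Euler-characteristic count. Since $F=F_\Gamma\circ\pi_\Gamma$ and $\pi_\Gamma\colon\BM\to\Gamma$ is a quotient map all of whose fibres are connected (each fibre being a connected level component of $F$), the $\pi_\Gamma$-preimage of the connected component of $F_\Gamma^{-1}(c_i-\epsilon,c_i+\epsilon)$ containing $v_i$ is exactly the connected component of $F^{-1}(c_i-\epsilon,c_i+\epsilon)$ containing $x_i$. Hence
\[
\BM\setminus\bigcup_i(F^{-1}(c_i-\epsilon,c_i+\epsilon))_{x_i}
=\pi_\Gamma^{-1}\Big(\Gamma\setminus\bigcup_i(F_\Gamma^{-1}(c_i-\epsilon,c_i+\epsilon))_{v_i}\Big),
\]
and by the previous proposition the base on the right is a disjoint union of trees $T_1,\dots,T_N$, each obtained from a tree by deleting precisely two of its endpoints; moreover these two deleted endpoints of a given $T_j$ are interior points of edges of $\Gamma$, equal to $c_i\pm\epsilon$ for suitable $i$, and in particular are regular values of $F$.

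Next I would analyse the piece $X_j:=\pi_\Gamma^{-1}(T_j)$. As $\Gamma$ is locally connected, each $T_j$ is open, so $X_j$ is open in $\BM$; if $X_j=U\sqcup V$ with $U,V$ open, nonempty and disjoint, then every (connected) fibre of $\pi_\Gamma$ lies in one of them, so $U$ and $V$ are $\pi_\Gamma$-saturated, and since $\pi_\Gamma$ is a quotient map their images would disconnect the connected set $T_j$ — a contradiction. Thus each $X_j$ is connected, and as the $T_j$ are pairwise disjoint the $X_j$ are exactly the connected components of $\BM\setminus\bigcup_i(F^{-1}(c_i-\epsilon,c_i+\epsilon))_{x_i}$. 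Let $\overline{T_j}$ denote the closure of $T_j$ in $\Gamma$, i.e.\ $T_j$ with its two deleted endpoints restored. Then $\overline{X_j}:=\pi_\Gamma^{-1}(\overline{T_j})$ is a compact connected orientable surface — a closed subsurface of $\BM$ — whose boundary is the $\pi_\Gamma$-preimage of those two endpoints; since each of them lies in the interior of an edge of $\Gamma$, its preimage is a single embedded circle, a connected component of some $F^{-1}(c_i\pm\epsilon)$, so $\overline{X_j}$ has exactly two boundary circles.

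It then remains to identify $\overline{X_j}$ as a cylinder, and for this I would compare Euler characteristics. A compact connected orientable surface with exactly two boundary circles has $\chi=-2\,\mathrm{genus}\le 0$, with equality precisely when it is a cylinder. On the other hand, put $P_i:=(F^{-1}(c_i-\epsilon,c_i+\epsilon))_{x_i}$; this is the $\pi_\Gamma$-preimage of the closed tripod around the degree-three (index one) vertex $v_i$, its boundary consisting of three circles of the form $F^{-1}(c_i\pm\epsilon)$, and it deformation retracts onto the figure-eight singular level component $(F^{-1}(c_i))_{x_i}$, so $\overline{P_i}$ is a pair of pants and $\chi(\overline{P_i})=-1$. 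The level circles $F^{-1}(c_i\pm\epsilon)$ cut $\BM$ into the pieces $\overline{P_1},\dots,\overline{P_{2g-2}}$ and $\overline{X_1},\dots,\overline{X_N}$ glued along circles, so additivity of the Euler characteristic gives
\[
2-2g=\chi(\BM)=\sum_{i=1}^{2g-2}\chi(\overline{P_i})+\sum_{j=1}^{N}\chi(\overline{X_j})=-(2g-2)+\sum_{j=1}^{N}\chi(\overline{X_j}),
\]
hence $\sum_{j=1}^{N}\chi(\overline{X_j})=0$. Since every summand is nonpositive, each $\chi(\overline{X_j})=0$, i.e.\ each $\overline{X_j}$ is a cylinder, with its two boundary circles lying among the level sets $F^{-1}(c_i\pm\epsilon)$; this is exactly the assertion.

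I expect the only real friction to be in the last two steps: verifying that each $\overline{X_j}$ has exactly two boundary circles (this is where ``precisely two endpoints removed'' from the previous proposition enters, together with the fact that $c_i\pm\epsilon$ are regular values of $F$) and that $\chi(\overline{P_i})=-1$. Once these are in place, the nonpositivity of $\chi$ for a two-holed surface forces the cylinder conclusion with no case analysis. An alternative to the global count — preferable if one wishes to avoid it — is to build $\overline{X_j}$ directly along the tree $T_j$: over each edge one gets a cylinder, over each interior vertex (a non-essential saddle) a pair of pants, and over each endpoint of $\Gamma$ lying in $T_j$ (a local extremum of $F$) a capping disk; gluing the pairs of pants in a tree pattern and then capping all but two of the resulting boundary circles again produces a cylinder. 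I would present the Euler-characteristic argument as the main line, since it is the shortest.
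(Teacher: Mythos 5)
Your proof is correct, and its overall plan coincides with the paper's: pull the previous proposition back along $\pi_\Gamma$, identify the pieces as compact subsurfaces with two boundary circles coming from the deleted endpoints, and conclude that they are cylinders. The difference is in how the genus-zero claim is established. The paper's proof simply \emph{asserts} that ``the analogue in $\BM$ is a surface of genus zero with two boundary components'' and then invokes the classification of surfaces; it never explains why the preimage of a tree has genus zero. Your global Euler-characteristic count — $\chi(\BM)=\sum_i\chi(\overline{P_i})+\sum_j\chi(\overline{X_j})$ with $\chi(\overline{P_i})=-1$ for each of the $2g-2$ essential saddles, forcing $\sum_j\chi(\overline{X_j})=0$ and hence each $\chi(\overline{X_j})=0$ since a compact orientable two-holed surface has $\chi=-2\cdot\mathrm{genus}\le 0$ — supplies exactly the missing justification, and does so without any case analysis. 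Your aside about the alternative local gluing over edges, interior vertices and endpoints of the tree is the argument the paper implicitly has in mind; the Euler-characteristic version you chose as the main line is tighter. Your care about connectivity of the $X_j$ and about $\overline{X_j}$ having exactly two boundary circles (using that the removed endpoints are interior points of edges, hence regular values) are also details the paper leaves to the reader. No gaps.
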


\begin{proof}
The connected components of $\BM\backslash\bigcup_i(F^{-1}(c_i-\epsilon,c_i+\epsilon))_{x_i}$
correspond to the connected components of $\Gamma\backslash\bigcup_{i}(F_{\Gamma}^{-1}(c_i-\epsilon, c_i+\epsilon))_{v_i}$
by the Reeb graph definition.
By the previous claim, each connected component of $\Gamma\backslash\bigcup_{i}(F_{\Gamma}^{-1}(c_i-\epsilon, c_i+\epsilon))_{v_i}$
is a tree with two end-points removed.
The analogue in $\BM\backslash\bigcup_i(F^{-1}(c_i-\epsilon,c_i+\epsilon))_{x_i}$
is a surface of genus zero with two boundary components corresponding to the level sets of the form $F^{-1}(c_i\pm\epsilon)$.
The values $c_i\pm\epsilon$ are regular, hence the boundary components have the structure of embedded circles.
By classification of surfaces these components are cylinders.
\end{proof}

\begin{defn}
By the term \textdf{pair of pants} we mean an embedding in $\BM$ of a connected orientable surface of genus zero with three boundary components.
\end{defn}

\textdf{Pair of Pants Decomposition.}
For $i=1,...,2g-2$, the critical point $x_i$ is essential, hence it is of index $1$.
Furthermore, it is the single critical point in $(F^{-1}(c_i-\epsilon,c_i+\epsilon))_{x_i}$.
Hence, by classification of surfaces, $(F^{-1}(c_i-~\epsilon,c_i+\epsilon))_{x_i}$
has the structure of an embedded surface of genus zero with three boundary components, or in other words, a pair of pants.
Denote by $l^i_1,l^i_2$ and $l^i_3$ the three boundary components of the pair of pants  $(F^{-1}(c_i-\epsilon,c_i+\epsilon))_{x_i}$.
By Proposition \ref{prop:cyl}, the complement of the union of all pairs of pants is a disjoint union of cylinders (Figure \ref{fig:decomp}, top).
The boundary components of each cylinder correspond to some two boundary components $l^i_t$ and $l^j_s$.
Hence, $l^i_t$ and $l^j_s$ are isotopic.
Note that $i$ can be equal to $j$ in the case that both boundaries of the cylinder belong to the same pair of pants.
There are $2g-2$ pairs of pants, each has $3$ boundary components. In total we have $6g-6$ boundary components.
Since each cylinder has $2$ boundary components, we get that there are $3g-3$ disjoint cylinders.
For each cylinder, choose one of its boundary components and attach the cylinder to a pair of pants along this boundary component,
denote the other boundary component of the cylinder by $\gamma_i$ for $i=1,...,3g-3$.
By attaching a cylinder to a pair of pants we again get a pair of pants.
As a result, $M\backslash\bigcup_i\gamma_i$ is a disjoint union of $2g-2$ pairs of pants.
We will denote by $P_i$ the pair of pants that contains the essential critical point $x_i$ (Figure \ref{fig:decomp}, bottom left).

\begin{figure}[tbp]
\centering
\includegraphics{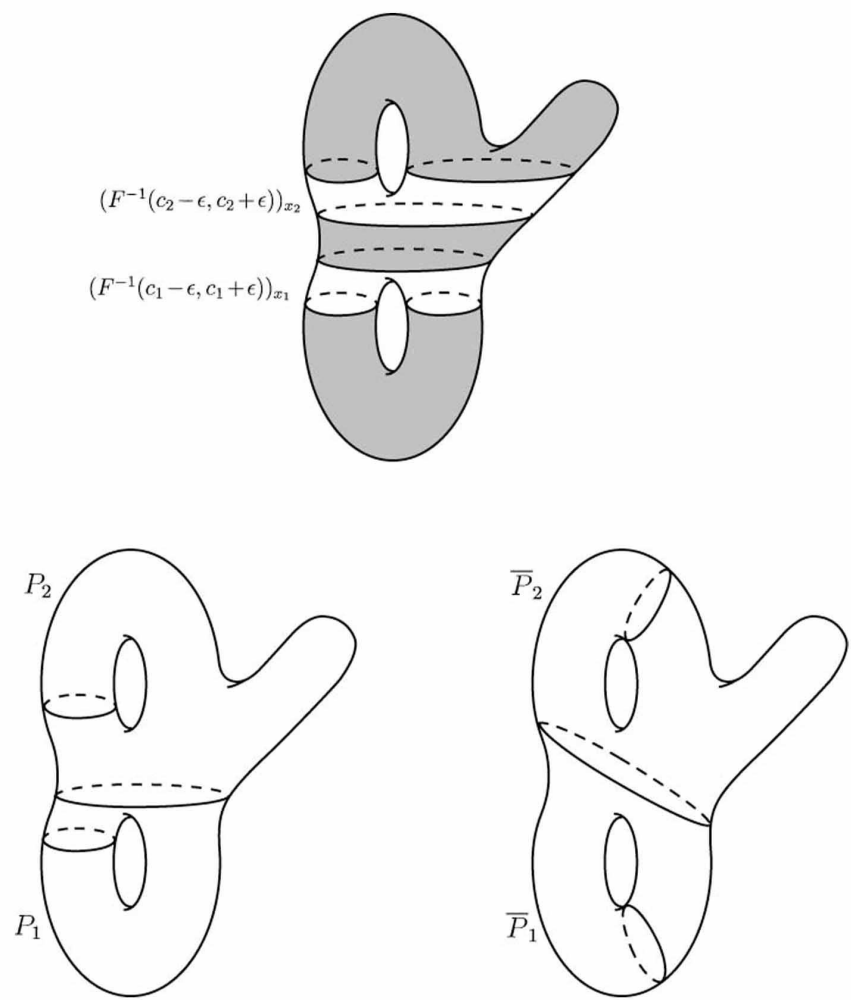}
\caption{The pair of pants decomposition.}
\label{fig:decomp}
\end{figure}

\begin{prop}
The circles in the collection $\{\gamma_1,...,\gamma_{3g-3}\}$ are disjoint, non-contractible, and pairwise non-isotopic.
\end{prop}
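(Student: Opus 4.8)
The plan is to establish the three assertions in turn, using only the structural fact proved just above the statement --- that $\BM\setminus\bigcup_i\gamma_i$ is a disjoint union of $2g-2$ pairs of pants --- together with two standard facts of surface topology. \textbf{Disjointness} I would simply read off the construction: $\gamma_i$ is one of the two boundary circles of the $i$-th cylinder in the decomposition of Proposition~\ref{prop:cyl}, distinct cylinders are disjoint, and their boundary circles are pairwise disjoint regular level sets of the form $F^{-1}(c_j\pm\epsilon)$.

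For \textbf{non-contractibility} I would argue by contradiction. If some $\gamma_i$ were contractible then, since an embedded null-homotopic simple closed curve on a surface bounds an embedded disc, each such curve bounds a disc in $\BM$. Among all pairs consisting of a curve from $\{\gamma_l\}$ together with a disc it bounds, pick one, $(\gamma_i,D)$, with $D$ minimal under inclusion; then $D^{\circ}$ can contain no $\gamma_l$, for such a $\gamma_l$ would bound a strictly smaller disc inside $D^{\circ}$ by Jordan--Schoenflies. Now $D^{\circ}$ is a non-empty connected open subset of $\BM$ disjoint from $\bigcup_l\gamma_l$ whose topological frontier is $\gamma_i$, and a short connectedness argument (any path in $\BM\setminus\bigcup_l\gamma_l$ leaving $D$ would have to cross $\gamma_i$) shows that $D^{\circ}$ is exactly one connected component of $\BM\setminus\bigcup_l\gamma_l$. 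But those components are pairs of pants, not discs --- a contradiction --- so every $\gamma_i$ is non-contractible.

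For \textbf{pairwise non-isotopy} I would again argue by contradiction, assuming $\gamma_i$ and $\gamma_j$ isotopic with $i\neq j$. Being disjoint, essential (by the previous step), and isotopic, they cobound an embedded annulus $A\subset\BM$ with $\partial A=\gamma_i\sqcup\gamma_j$. Any $\gamma_l$ contained in $A^{\circ}$ is either contractible in $A^{\circ}$, hence in $\BM$ --- impossible by the previous step --- or parallel to the core of $A$, in which case it cuts off a strictly smaller annulus cobounding $\gamma_l$ with $\gamma_i$ or $\gamma_j$; choosing $A$ so as to contain as few of the $\gamma$'s as possible therefore forces $A^{\circ}\cap\bigcup_l\gamma_l=\emptyset$. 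As before, $A^{\circ}$ is then a connected component of $\BM\setminus\bigcup_l\gamma_l$, i.e. a pair of pants, contradicting $\chi(A^{\circ})=0$. Hence the $\gamma_i$ are pairwise non-isotopic.

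The substance of the argument is carried by the two topological inputs --- that an embedded contractible simple closed curve bounds a disc, and that disjoint essential isotopic simple closed curves cobound an embedded annulus --- plus the bookkeeping that legitimizes the ``innermost'' choices and identifies $D^{\circ}$ (resp. $A^{\circ}$) with a complementary region. I expect the main friction to be the isotopy-to-annulus step and the verification that the minimal-annulus choice genuinely has interior disjoint from all the $\gamma_l$; everything else is routine.
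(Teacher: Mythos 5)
Your proof is correct, and in the non-contractibility step it takes a genuinely different route from the paper. The paper argues via the Reeb graph: if $\gamma_i$ were contractible, the edge $e\ni\pi(\gamma_i)$ of $\Gamma$ would be adjacent to an essential vertex $v$ on one side and to a tree (projecting from the disc bounded by $\gamma_i$) on the other, so one of the subgraphs associated with $v$ would be a tree, contradicting the definition of an essential vertex. You instead stay entirely inside surface topology, using an innermost-disc argument together with the just-established fact that the components of $\BM\setminus\bigcup_l\gamma_l$ are pairs of pants (so have Euler characteristic $-1$, not $+1$). Both are sound; the paper's proof reuses the combinatorial framework it has already set up, while yours is self-contained modulo the pair-of-pants count and would work verbatim for any disjoint family whose complementary regions are pairs of pants. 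For pairwise non-isotopy the two proofs are close in spirit --- both pass to the embedded annulus $A$ cobounded by $\gamma_i$ and $\gamma_j$ --- but the paper then argues that one of the two pairs of pants adjacent to $\gamma_i$ sits inside $A$ and therefore has a contractible boundary circle, whereas you run a second innermost argument to show $A^{\circ}$ is a complementary component, then contradict $\chi(A^{\circ})=0$. Your version is cleaner in that it makes the minimality choice explicit and avoids the (somewhat informal, in the paper) verification that the adjacent pair of pants is actually contained in $A$ and that one of its remaining boundary curves must bound a disc. Disjointness is identical in both. One small point to keep tidy in write-up: in the innermost-disc step you should note that the discs $D_l$ exist because $g\ge 2$ forces any contractible simple closed curve to bound a disc on exactly one side, and that ``minimal under inclusion'' is taken in the finite poset of such discs, so an innermost one exists; you gesture at this but it is worth a sentence.
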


\begin{proof}
The circles are disjoint since they are boundaries of disjoint cylinders.
Let $\gamma\in\{\gamma_1,...,\gamma_{3g-3}\}$.
If $\gamma$ is contractible then it bounds a disc in $\BM$.
Let $e$ be the edge in the Reeb graph $\Gamma$, that contains the image of the level curve $\gamma$ by the natural projection $\pi:\BM\rightarrow\Gamma$,
i.e. $\pi(\gamma)\in e$.
Then $e$ is adjacent to an essential vertex $v$ on one end, and to a tree on the other end, corresponding to the disc.
But by the definition of an essential vertex, the subgraphs associated with $v$ are not trees, leading to a contradiction.
Therefore $\gamma$ is not contractible.\\
Let $\gamma_i,\gamma_j\in\{\gamma_1,...,\gamma_{3g-3}\}$ for $i\ne j$.
Assume that $\gamma_i$ is isotopic to $\gamma_j$.
Then $\gamma_i$ and $\gamma_j$ are the boundaries of some cylinder $C$ in $\BM$.
But $\gamma_i$ is also a boundary component of some pairs of pants $P^+,P^-\in\{P_1,...,P_{2g-2}\}$ (maybe a priory equal).
Thus, at least one of $P^+,P^-$ is contained in $C$.
This implies that at least one of the three boundary components of this pair of pants,
denoted by $\gamma_k\in\{\gamma_1,...,\gamma_{3g-3}\}$, is contractible, contradicting the first claim.
Hence, $\{\gamma_1,...,\gamma_{3g-3}\}$ are pairwise non-isotopic.
\end{proof}

Take an auxiliary metric of negative curvature on $\BM$.
By a theorem \cite{CB}, the submanifold that consists of the circles $\{\gamma_1,...,\gamma_{3g-3}\}$
is isotopic to a unique disjoint union of simple closed geodesics $\{\delta_1,...,\delta_{3g-3}\}$.
Note that $\BM\backslash \bigcup_i \delta_i$ is a disjoint union of pairs of pants since it is isotopic to $M\backslash\bigcup_i\gamma_i$.
We will denote by $\overline{P_i}$ the pair of pants with geodesic boundaries isotopic to $P_i$ (Figure \ref{fig:decomp}, bottom right).

\begin{defn}
We will call the collection $\{\overline{P}_1,...,\overline{P}_{2g-2}\}$, the \textdf{pair of pants decomposition of $M$ associated with $F$.}
\end{defn}

\begin{prop}
Given an auxiliary metric of negative curvature on $\BM$, the pair of pants decomposition of $\BM$ associated with $F$ is well-defined.
\end{prop}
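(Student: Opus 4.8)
The plan is to show that the collection $\{\overline{P_1},\dots,\overline{P_{2g-2}}\}$ does not depend on the choices entering its construction, the fixed data being the surface $\BM$, the generic Morse function $F$, and the auxiliary metric of negative curvature. The essential critical points $\ECP_F=\{x_1,\dots,x_{2g-2}\}$, the essential vertices $v_i$ of the Reeb graph, and the corresponding critical values $c_i$ are all canonically determined by $F$, so the only genuine choices are: (i) the small number $\epsilon>0$; and (ii) for each of the $3g-3$ cylinders in Proposition~\ref{prop:cyl}, which of its two boundary circles is glued to a pair of pants and which is kept as a curve $\gamma_i$. I would show that neither choice affects the final geodesic multicurve, and then that the labeling by the $x_i$ is likewise unambiguous.

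First I would dispose of the dependence on $\epsilon$. If $0<\epsilon'\le\epsilon$ are both small enough that $[c_i-\epsilon,c_i+\epsilon]$ meets the critical values of $F$ only in $c_i$, then the closed region of $\BM$ lying between $F^{-1}(c_i+\epsilon')$ and $F^{-1}(c_i+\epsilon)$ contains no critical point, so by Morse theory it is a disjoint union of annuli; hence $F^{-1}(c_i+\epsilon')$ is isotopic to $F^{-1}(c_i+\epsilon)$, and similarly on the other side of $c_i$. Consequently the cylinders of Proposition~\ref{prop:cyl} built from $\epsilon'$ are componentwise isotopic to those built from $\epsilon$, and the curves $\gamma_i$ produced from $\epsilon'$ are isotopic to those produced from $\epsilon$.

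Next I would treat choice (ii). By Proposition~\ref{prop:cyl} the set $\BM\setminus\bigcup_i(F^{-1}(c_i-\epsilon,c_i+\epsilon))_{x_i}$ is a canonically defined disjoint union of $3g-3$ cylinders $C_1,\dots,C_{3g-3}$. The two boundary circles of a cylinder are isotopic to one another through that cylinder, so each $C_k$ determines a single isotopy class $\alpha_k$ of simple closed curve in $\BM$; whichever boundary circle of $C_k$ is retained as a $\gamma$-curve, it represents exactly $\alpha_k$. Thus the unordered set of isotopy classes realized by $\{\gamma_1,\dots,\gamma_{3g-3}\}$ equals $\{\alpha_1,\dots,\alpha_{3g-3}\}$ regardless of choice (ii), and, by the previous paragraph, regardless of $\epsilon$; by the preceding proposition these classes are pairwise distinct and represented by disjoint, non-contractible curves. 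Now applying the theorem of \cite{CB} quoted above, the multicurve with components $\alpha_1,\dots,\alpha_{3g-3}$ is isotopic to a \emph{unique} disjoint union of simple closed geodesics $\{\delta_1,\dots,\delta_{3g-3}\}$, which is therefore choice-independent, and hence so is the decomposition of $\BM\setminus\bigcup_i\delta_i$ into $2g-2$ pairs of pants.

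Finally, for the labeling: the canonical embedded pairs of pants $Q_i:=(F^{-1}(c_i-\epsilon,c_i+\epsilon))_{x_i}$ are disjoint, independent of all choices up to isotopy, and each $\partial Q_i$ consists of three non-contractible circles (each isotopic to one of the $\gamma_k$, hence to some $\delta_k$). Hence $Q_i$ is incompressible in $\BM$ and can be isotoped so that $\partial Q_i\subset\bigcup_k\delta_k$, whereupon it is isotopic to a single component of $\BM\setminus\bigcup_k\delta_k$; distinct $Q_i$ land in distinct components since a pair of pants cannot contain two disjoint incompressible pairs of pants. Declaring $\overline{P_i}$ to be that component (which contains, up to isotopy, the essential critical point $x_i$) yields a choice-independent labeled decomposition, establishing the proposition. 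I expect the only point requiring real care to be this last step — the surface-topology bookkeeping identifying $\overline{P_i}$ via the isotopy — since the isotopy-invariance of the unordered family $\{\gamma_i\}$ is essentially immediate once it is phrased through the canonical cylinders of Proposition~\ref{prop:cyl}.
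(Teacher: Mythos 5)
Your proof is correct and follows the same strategy as the paper's (show the curves $\gamma_i$ are well-defined up to isotopy, then invoke the uniqueness of geodesic representatives from \cite{CB}), but you carefully justify what the paper merely asserts in one line, and you additionally address the labeling of the pairs of pants by the essential critical points $x_i$, which the paper's proof glosses over entirely. This extra care about the labeling step is a genuine improvement on the paper's terse argument.
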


\begin{proof}
The only choices we had in the definition, were the choice of a small enough $\epsilon>0$,
and which one of the boundary components of each cylinder will be denoted by $\gamma_i$.
Note that these choices do not affect the circles $\{\gamma_1,...,\gamma_{3g-3}\}$ up to isotopy.
The geodesics $\{\delta_1,...,\delta_{3g-3}\}$ are isotopic to  $\{\gamma_1,...,\gamma_{3g-3}\}$, and are uniquely determined, given the auxiliary metric.
Hence they do not depend on the above choices, and the term is well-defined.
\end{proof}

\ftsection[\secsize]{Figure Eight Intersections}{sec:eight}

We will define a figure eight collection of a generic Morse function $F:\BM\rightarrow\real$, and prove an intersection theorem,
using hyperbolic geometry tools and Hall's marriage theorem.
This is the last step towards the proof of monotonicity.
\\
\par
\begin{defn}
By the term \textdf{figure eight} we refer to an immersion in $\BM$ of a closed curve with a unique transversal double point.
\end{defn}

\begin{defn}
Let $\BM$ be a closed surface of genus $g\ge 2$ and let $F:\BM\rightarrow\real$ be a generic Morse function.
Let $\ECP$ be the set of essential critical points of $F$.
For each $x_i\in\ECP$, denote by $c_i=F(x_i)$ its critical value for $i=1,...,2g-2$.
We can assume $c_1<c_2<...<c_{2g-2}$.
Denote by $e_i$ the connected component of $F^{-1}(c_i)$ that contains $x_i$.
Note that $x_i$ is the only critical point in this level set, and its index is $1$ since it is essential.
By classification theory, $e_i$ is an immersed closed curve with a unique transversal double point at $x_i$.
Thereby, $e_i$ is a figure eight.
We will call the collection $\{e_1,...,e_{2g-2}\}$ the \textdf{figure eight collection of $F$}.
\end{defn}

We will use the following preliminary results from hyperbolic geometry. Proofs can be found in \cite{Buser}.

\begin{thm}\label{thm:prelim}
Let $S$ be a compact hyperbolic surface and let $\gamma$ be a non-contractible closed curve on $S$.
We will denote by $\PD$ the Poincar\'e disc model,
and by $\widehat{S}\subset\PD$ the universal cover of $S$.
Note that $\widehat{S}$ is isometric to $\PD$ when $S$ is without boundary.
Then the following hold:\\
(i) $\gamma$ is freely homotopic to a unique closed geodesic $\eta$.\\
(ii) For any lift $\widehat{\eta}$ of $\eta$ in the universal covering $\widehat{S}\subset\PD$ there exists a lift $\widehat{\gamma}$ of $\gamma$ such that  $\widehat{\gamma}$ and $\widehat{\eta}$ have the same endpoints at the circle at infinity.\\
(iii) $\eta$ is either contained in $\partial S$ or $\eta\cap\partial S=\emptyset$.
\end{thm}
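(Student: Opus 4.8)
These are classical facts; the plan is to reduce all three parts to the dictionary between free homotopy classes of closed curves on $S$ and conjugacy classes in the deck group, together with the fact that, since $S$ is compact, every nontrivial deck transformation is a hyperbolic isometry of $\PD$. Write $G=\pi_1(S)$ acting on the universal cover $\widehat S\subset\PD$. When $\partial S=\emptyset$ one has $\widehat S=\PD$; when $S$ has geodesic boundary one takes $\widehat S$ to be the convex hull of the limit set $\Lambda(G)$, a closed convex region whose topological boundary is a disjoint union of complete geodesics, namely the lifts of the components of $\partial S$. Since $S$ is compact, $G$ is discrete and contains no parabolic or elliptic nontrivial elements, so each $g\in G\setminus\{1\}$ is hyperbolic with a unique invariant \emph{axis} $A_g$ --- the complete geodesic joining its two fixed points on the circle at infinity --- on which $g$ acts as translation by its translation length $\ell(g)$.

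For (i): the non-contractible loop $\gamma$ determines a nontrivial conjugacy class in $G$; choose a representative $g$. The geodesic $A_g$ is $g$-invariant, so it descends to a closed geodesic $\eta\subset S$ of length $\ell(g)$ whose free homotopy class is the conjugacy class of $g$, i.e.\ that of $\gamma$; hence $\eta$ is freely homotopic to $\gamma$. For uniqueness, any closed geodesic freely homotopic to $\gamma$ lifts to the axis of some representative of $[\gamma]$, and after a deck translation we may take that representative to be $g$ itself; since the axis of $g$ is unique, the two lifts coincide, and so do their projections. Alternatively one minimizes length in the free homotopy class by Arzel\`a--Ascoli using compactness of $S$, then invokes uniqueness of the geodesic with prescribed endpoints at infinity.

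For (ii): fix a lift $\widehat\eta$ of $\eta$; it is $A_{g'}$ for a conjugate $g'=hgh^{-1}$, $h\in G$, that stabilizes it. Extend $\gamma$ to a periodic curve $\mathbb R\to S$ and let $\widehat\gamma$ be the lift of this curve invariant under $g'$; it is the union of the $\langle g'\rangle$-translates of one compact arc from a point $p$ to $g'p$. As $g'$ is hyperbolic, $(g')^{n}p$ converges as $n\to+\infty$, resp.\ $n\to-\infty$, to the attracting, resp.\ repelling, fixed point of $g'$ --- the two ideal endpoints of $A_{g'}=\widehat\eta$ --- and since the connecting arcs have uniformly bounded diameter, the whole curve $\widehat\gamma$ converges to those same two points. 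Thus $\widehat\gamma$ and $\widehat\eta$ share their endpoints on the circle at infinity, and applying $h$ handles an arbitrary choice of lift $\widehat\eta$.

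For (iii): if $\partial S=\emptyset$ there is nothing to prove. Otherwise the ideal endpoints of $A_g$, being fixed points of an element of $G$, lie in $\Lambda(G)$, so by convexity of $\widehat S$ we have $A_g\subset\widehat S$. Let $\widehat\beta$ be any boundary geodesic of $\widehat S$; it is the axis of a conjugate of some boundary element of $G$. If $A_g=\widehat\beta$, then $g$ stabilizes $\widehat\beta$, hence $g$ is a power of a boundary loop and $\eta$ is the corresponding component of $\partial S$, giving $\eta\subset\partial S$. Otherwise $A_g$ and $\widehat\beta$ are distinct; they cannot share exactly one ideal endpoint, since that would make two hyperbolic elements of the discrete group $G$ have a single common fixed point, which is impossible; and they cannot cross transversally, because $\widehat S$ --- being convex with $\widehat\beta$ in its boundary --- lies in one closed half-plane bounded by $\widehat\beta$, so $A_g\subset\widehat S$ cannot pass to the other side. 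Hence $A_g$ is disjoint from $\widehat\beta$, and as this holds for every boundary geodesic, either $\eta\subset\partial S$ or $A_g$ lies in the interior of $\widehat S$, in which latter case $\eta\cap\partial S=\emptyset$. The only genuine work is in the bordered case: setting up $\widehat S$ as a convex region with geodesic boundary and verifying that axes meet the boundary geodesics only in the two permitted ways; parts (i) and (ii) are the standard correspondence between conjugacy classes and closed geodesics.
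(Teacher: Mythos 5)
The paper does not prove Theorem~\ref{thm:prelim} at all --- it simply cites Buser's \emph{Geometry and Spectra of Compact Riemann Surfaces} \cite{Buser} as the source. Your proof is a correct rendition of the standard argument via axes of hyperbolic deck transformations (the dictionary between free homotopy classes of loops and conjugacy classes in $\pi_1(S)$, together with the fact that a compact hyperbolic surface has a convex cocompact, torsion-free, parabolic-free fundamental group), which is essentially the treatment you would find in Buser; there is nothing to compare against on the paper's side.

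One small point worth tightening in your write-up of (iii): you should note explicitly that the ideal endpoints of $A_g$ are the fixed points of the hyperbolic element $g$, hence lie in $\Lambda(G)$, and that this is what forces $A_g\subset\widehat S$ by convexity --- you state this, but it is the load-bearing step, and it is also why $\widehat S$ has to be taken as the convex hull of $\Lambda(G)$ (equivalently the Nielsen region) rather than an arbitrary fundamental-domain-based picture of the universal cover. With that made explicit the argument for disjointness from each boundary geodesic (no shared single endpoint by discreteness, no transversal crossing by half-plane convexity) is complete and correct.
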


\begin{thm}\label{thm:trans}
Let $\gamma_1,\gamma_2$ be two non-contractible closed curves,
and let $\eta_1$, $\eta_2$ be the unique closed geodesic curves freely homotopic to $\gamma_1$ and $\gamma_2$ respectively.
Then if $\eta_1$ and $\eta_2$ have transversal intersection, it implies $\gamma_1\cap \gamma_2\ne\emptyset$.
\end{thm}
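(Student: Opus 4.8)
The plan is to prove the contrapositive: if $\gamma_1 \cap \gamma_2 = \emptyset$, then $\eta_1$ and $\eta_2$ cannot intersect transversally. So assume $\gamma_1$ and $\gamma_2$ are disjoint. First I would lift everything to the Poincar\'e disc model $\PD$, which by Theorem \ref{thm:prelim} is isometric to the universal cover $\widehat{S}$ since $S$ is a closed hyperbolic surface. Each closed geodesic $\eta_i$ lifts to a collection of complete geodesics in $\PD$, each determined by its pair of endpoints on the circle at infinity $\partial \PD$. A transversal intersection of $\eta_1$ and $\eta_2$ downstairs produces, after choosing appropriate lifts, two lifted geodesics $\widehat{\eta}_1$ and $\widehat{\eta}_2$ that cross transversally in $\PD$; this means their endpoint pairs on $\partial \PD$ are \emph{linked}, i.e.\ the two endpoints of $\widehat{\eta}_1$ separate the two endpoints of $\widehat{\eta}_2$ on the circle.

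Next I would invoke part (ii) of Theorem \ref{thm:prelim}: for the chosen lift $\widehat{\eta}_1$ there is a lift $\widehat{\gamma}_1$ of $\gamma_1$ with the same endpoints at infinity as $\widehat{\eta}_1$, and similarly a lift $\widehat{\gamma}_2$ of $\gamma_2$ sharing endpoints with $\widehat{\eta}_2$. The key topological fact is that a lifted curve, being a proper arc in the disc limiting to its two endpoints, divides $\PD$ into two regions with those endpoints splitting $\partial \PD$ into the two corresponding arcs. Since the endpoint pairs of $\widehat{\gamma}_1$ and $\widehat{\gamma}_2$ are linked (they coincide with the linked endpoint pairs of $\widehat{\eta}_1,\widehat{\eta}_2$), the endpoints of $\widehat{\gamma}_2$ lie in the two \emph{different} components of $\PD \setminus \widehat{\gamma}_1$. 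By a connectedness/Jordan-curve type argument, the arc $\widehat{\gamma}_2$ must therefore cross $\widehat{\gamma}_1$. Projecting back down to $S$, this intersection point shows $\gamma_1 \cap \gamma_2 \neq \emptyset$, contradicting our assumption. Hence $\eta_1$ and $\eta_2$ have no transversal intersection, which proves the theorem.

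The step I expect to be the main obstacle is the passage from "the geodesics $\eta_1,\eta_2$ intersect transversally downstairs" to "there are lifts whose endpoint pairs are linked on the circle at infinity," and then converting the linking of endpoint pairs into an actual forced intersection of the lifted \emph{curves} $\widehat{\gamma}_i$ (not geodesics). The first part requires care in choosing compatible lifts under the deck group action; the second requires a clean statement that a proper embedded-enough arc with distinct endpoints at infinity separates the disc according to those endpoints, so that two arcs with linked endpoint pairs must meet. One has to be slightly careful because $\widehat{\gamma}_i$ is only an immersed, not embedded, arc in general (as $\gamma_i$ may have self-intersections), but the separation argument can be run using any continuous path from one endpoint region to the other, so immersedness is not an obstruction: any continuous arc in $\PD$ joining the two endpoints of $\widehat{\gamma}_2$ must cross any continuous arc joining the two endpoints of $\widehat{\gamma}_1$ when the endpoint pairs link. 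I would phrase this via a limiting-to-infinity extension of the arcs to the closed disc and a standard Jordan-curve argument on $\overline{\PD}$.
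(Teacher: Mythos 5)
The paper does not prove Theorem~\ref{thm:trans}: it cites \cite{Buser} and treats it as a known preliminary fact from hyperbolic geometry, so there is no internal proof to compare against. Your argument is nonetheless correct and is the standard one; moreover it is exactly the lift-and-link argument that the paper itself runs inside the proof of Lemma~\ref{lem:e1e2} (for the case where the two geodesic figure-eights coincide): lift near a transversal crossing to obtain complete geodesics $\widehat{\eta}_1,\widehat{\eta}_2$ in $\PD$ whose ideal endpoint pairs on $S^1_\infty$ are linked, use Theorem~\ref{thm:prelim}(ii) to replace them by lifts $\widehat{\gamma}_1,\widehat{\gamma}_2$ of the original curves with the same ideal endpoints, and conclude that $\widehat{\gamma}_1$ and $\widehat{\gamma}_2$ must cross, hence so do $\gamma_1$ and $\gamma_2$. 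You are right to flag that $\widehat{\gamma}_i$ is in general only an immersed proper arc, so one should not assert that it separates the disc into two components; the fallback you offer---that any two proper continuous arcs in $\overline{\PD}$ with linked ideal endpoint pairs must meet---is precisely the statement needed, and it is a Brouwer/Hex-type fact valid for arbitrary continuous arcs, not only embedded ones. The only cosmetic slip is the opening claim to argue by contrapositive: you announce the assumption $\gamma_1\cap\gamma_2=\emptyset$ but then never use it, instead giving the direct implication that transversal intersection of the $\eta_i$ forces an intersection of the $\gamma_i$; either framing is fine, but the write-up should commit to one.
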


\begin{lem}\label{lem:contained}
Let $x\in\ECP=\{x_1,...,x_{2g-2}\}$ be an essential critical point with critical value $c=F(x)$.
Let $\overline{P}\in\{\overline{P}_1,...,\overline{P}_{2g-2}\}$ be the corresponding geodesic pair of pants and $e\in\{e_1,...,e_{2g-2}\}$ the corresponding figure eight.
Then the unique closed geodesic homotopic to $e$, denoted by $\eta$, is contained in $\overline{P}$,
and $\overline{P}\backslash\eta$ is a disjoint union of three cylinders.
\end{lem}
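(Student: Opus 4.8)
The starting point is the description of $\overline{P}$ from Section~\ref{sec:pants}. Put $P:=(F^{-1}(c-\epsilon,c+\epsilon))_{x}$; by the construction there $P$ is an embedded pair of pants in $\BM$, isotopic to $\overline{P}$, and $F|_{P}$ is a Morse function with the single critical point $x$, of index $1$, at the middle level $c$. By elementary Morse theory $\{F\le c\}\cap P$ and $\{F\ge c\}\cap P$ each deformation retract onto the level set $e=F^{-1}(c)_{x}$, so $P$ deformation retracts onto $e$. The figure eight $e$ has two loops $\ell_{1},\ell_{2}$, each an embedded circle, and each freely homotopic to one of the two boundary components of $P$ lying on the side of $x$ carrying two components of $F^{-1}(c\pm\epsilon)$; these boundary components are distinct, so $[\ell_{1}],[\ell_{2}]$ generate $\pi_{1}(P)$ and $e$ is a spine of $P$. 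Carrying this through the isotopy $P\simeq\overline{P}$, the loops $\ell_{1},\ell_{2}$ become freely homotopic to two of the three boundary geodesics of $\overline{P}$, say $\delta_{1},\delta_{2}$, with $a:=[\ell_{1}]$ and $b:=[\ell_{2}]$ free generators of $\pi_{1}(\overline{P})\cong F_{2}$ and the third boundary geodesic $\delta_{3}$ representing the peripheral class $(ab)^{-1}$.

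Next I produce $\eta$ and locate it. Regard $S:=\overline{P}$ as a compact hyperbolic surface with geodesic boundary and let $\widehat{S}\subset\PD$ be a lift of its universal cover. Since $\widehat{S}$ is obtained from $\PD$ by deleting the open half-planes cut off by the lifts of $\partial\overline{P}$, it is a convex, $\pi_{1}(\overline{P})$-invariant region; hence the axis in $\PD$ of every hyperbolic element of $\pi_{1}(\overline{P})$ lies in $\widehat{S}$. The geodesic realisation of $e$ is the closed geodesic in the conjugacy class whose cyclically reduced form in $F_{2}=\langle a,b\rangle$ is $ab^{-1}$ --- the two loops being run with opposite rotational sense at the double point --- and this class is not peripheral in $\overline{P}$, since $ab^{-1}$ is conjugate to none of $a^{\pm1},b^{\pm1},(ab)^{\pm1}$. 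By Theorem~\ref{thm:prelim}(i) applied to $S$ this class contains a unique closed geodesic $\eta$, and by comparing lifts of $e$ and of $\eta$ in $\PD$ as in Theorem~\ref{thm:prelim}(ii) one checks that $\eta$ again carries exactly one transversal self-intersection, with loops freely homotopic to $\ell_{1}$ and $\ell_{2}$; this is the unique closed geodesic homotopic to $e$. As its class is not peripheral, $\eta$ is none of the $\delta_{i}$, so Theorem~\ref{thm:prelim}(iii) (for $S=\overline{P}$) gives $\eta\cap\partial\overline{P}=\emptyset$; together with the convexity of $\widehat{S}$, which keeps the axes of the loops of $\eta$, and hence $\eta$ itself, inside the lift of $\overline{P}$, this yields $\eta\subset\operatorname{int}\overline{P}$.

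It remains to analyse $\overline{P}\setminus\eta$. The two loops of $\eta$ are freely homotopic to $\delta_{1},\delta_{2}$, so their classes generate $\pi_{1}(\overline{P})$ and $\eta$ is a spine of the pair of pants $\overline{P}$; equivalently, a closed regular neighbourhood $N(\eta)$ of $\eta$ in $\overline{P}$ is $\overline{P}$ with three disjoint open collars of $\delta_{1},\delta_{2},\delta_{3}$ removed. Thus $\overline{P}\setminus\eta=\bigl(N(\eta)\setminus\eta\bigr)\cup(\text{three closed collar annuli})$; since $N(\eta)\setminus\eta$ is a disjoint union of three half-open annuli, one per boundary circle of $N(\eta)$, glued along those circles to the three collars, $\overline{P}\setminus\eta$ is a disjoint union of three half-open annuli, i.e.\ of three cylinders.

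I expect the middle paragraph to be the crux: the existence and uniqueness of the geodesic figure eight $\eta$, the fact that it has a single transversal self-intersection with loops in the prescribed free homotopy classes, and its containment in $\overline{P}$. This is exactly where the hyperbolic input of Theorem~\ref{thm:prelim} and the Poincar\'e disc model are needed; the Morse-theoretic identification of $e$ as a spine and the topological fact that the complement of a spine of a pair of pants is a union of three annuli are routine. A minor point worth noting: $\ell_{1}$ and $\ell_{2}$ may be freely homotopic to one another in $\BM$ --- this occurs when the pair of pants decomposition of Section~\ref{sec:pants} has a self-glued piece --- but since all of this takes place inside the incompressible subsurface $\overline{P}$, where $\pi_{1}$ injects, this causes no difficulty.
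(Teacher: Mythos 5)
Your argument is correct and follows the same skeleton as the paper's: realise $e$ as a spine of the intermediate pair of pants $P\simeq\overline{P}$, view $\overline{P}\cup\partial\overline{P}$ as a compact hyperbolic surface with geodesic boundary, invoke Theorem~\ref{thm:prelim} to produce the geodesic representative $\eta$, and conclude $\eta\subset\overline{P}$ because it is not peripheral, after which the spine argument gives the three cylinders. The one place where you genuinely add content is where the paper is terse: the paper simply asserts that $\widetilde e$ is not freely homotopic to any boundary component of $\overline{P}$, while you justify this by identifying the class of $e$ in $\pi_1(\overline{P})=\langle a,b\rangle$ as $ab^{-1}$ and checking that $ab^{-1}$ is conjugate to none of the peripheral classes $a^{\pm1},b^{\pm1},(ab)^{\pm1}$. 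The hidden fact you invoke --- that a Morse level-set figure eight, parametrised as an immersed closed curve going straight through the saddle, runs its two loops with opposite rotational sense and so lies in the class $ab^{-1}$ rather than $ab$ --- is correct and is precisely what makes the lemma true; if the class were $ab$, the geodesic representative would be the boundary $\delta_3$ and the conclusion would fail, so it is a good thing to have made explicit. Your additional appeal to the convexity of the lift $\widehat S\subset\PD$ to trap $\eta$ inside is not strictly needed once Theorem~\ref{thm:prelim}(iii) and non-peripherality are in hand, but it does no harm and makes the containment concrete.
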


\begin{proof}
The figure eight $e$ is defined as the connected component of the level set $F^{-1}(c)$ that contains $x$.
Note that $e$ is contained in the pair of pants $(F^{-1}(c-\epsilon,c+\epsilon))_x$, and $(F^{-1}(c-\epsilon,c+\epsilon))_x\backslash e$ is a disjoint union of three cylinders.
In the definition of the pair of pants decomposition, we construct an intermediate pair of pants $P$ by attaching cylinders to some (or none) of the boundary components of $(F^{-1}(c-\epsilon,c+\epsilon))_x$.
Following the notation used in the definition, we denote the new boundary components of the pair of pants by $\gamma_1$,$\gamma_2$ and $\gamma_3$.
Note that $\gamma_2$ and $\gamma_3$ coincide in the case in which two boundary components of the initial pair of pants are isotopic.
We only attached cylinders to boundaries of $(F^{-1}(c-\epsilon,c+\epsilon))_x$,
therefore $P\backslash e$ is also a disjoint union of three cylinders.
\par
By a theorem \cite{CB} there exists a homeomorphism $h:\BM\rightarrow\BM$ isotopic to the identity such that $\gamma_1$,$\gamma_2$ and $\gamma_3$ are sent to the unique closed geodesics $\delta_1$, $\delta_2$ and $\delta_3$ respectively.
The image of $P$ by $h$ is therefore $\overline{P}$, the pair of pants bounded by $\delta_1$, $\delta_2$ and $\delta_3$.
The image of $e$ by $h$, denoted by $\widetilde{e}:=h(e)$, is a figure eight homotopic to $e$
and contained in $\overline{P}$.
Furthermore, $h(P\backslash e)=\overline{P}\backslash \widetilde{e}$ is again a disjoint union of three cylinders since $h$ is a homeomorphism.
We can consider $\overline{P}\cup\partial\overline{P}$ as a compact hyperbolic surface with boundary,
and $\widetilde{e}$ is a non-contractible closed curve on $\overline{P}$.
Hence, by Theorem \ref{thm:prelim}, $\widetilde{e}$ is freely homotopic to a unique closed geodesic $\eta$,
and $\eta$ is either contained in $\partial\overline{P}$ or $\eta\cap\overline{P}=\emptyset$.
Since $\widetilde{e}$ is not homotopic to any of the boundary components of $\overline{P}$, $\eta$ cannot be contained in $\partial\overline{P}$.
Therefore, $\eta\cap\partial\overline{P}=\emptyset$ and $\eta$ is contained in $\overline{P}$.
Note that $e$ is freely homotopic to $\widetilde{e}$, so $\eta$ is also the geodesic closed curve homotopic to $e$ by uniqueness.
Since $\eta$ is in the homotopy class of $\widetilde{e}$, together with the fact that a geodesic curve cannot bound any disc,
it follows that $\overline{P}\backslash\eta$ is also a disjoint union of three cylinders.
\end{proof}

\begin{lem}\label{lem:e1e2}
Let $e_1$, $e_2$ be two figure eights (not necessarily from the same figure eight collection), and let $\eta_1$, $\eta_2$ be the unique geodesic figure eights freely homotopic to $e_1$ and $e_2$, respectively.
Then $\eta_1\cap\eta_2\ne\emptyset$ implies $e_1\cap e_2\ne\emptyset$.
\end{lem}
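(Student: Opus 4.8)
The plan is to reduce the statement to Theorem~\ref{thm:trans} by cutting each figure eight into its two lobes. Recall that a figure eight $e$ is the union of two embedded loops $\lambda,\mu$ (its lobes) meeting only at the double point, and likewise the geodesic figure eight $\eta$ is a union of two embedded geodesic loops $\bar\lambda,\bar\mu$ through its double point (genuine geodesic loops, with a corner at that point, since the two branches of $\eta$ cross there transversally). The identification of $e$ with $\eta$ up to free homotopy matches lobes to lobes (this is how $\eta$ is produced, cf.\ the proof of Lemma~\ref{lem:contained}), so after relabelling we may take $\lambda$ freely homotopic to $\bar\lambda$ and $\mu$ freely homotopic to $\bar\mu$; moreover each lobe is a non-contractible simple closed curve contained in the relevant geodesic pair of pants (Lemma~\ref{lem:contained}), hence is isotopic to one of that pants' boundary geodesics, and in particular is freely homotopic to a simple closed geodesic. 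Apply this to $e_1,\eta_1$ and to $e_2,\eta_2$. Then $\eta_1\cap\eta_2\neq\emptyset$ means that some lobe of $\eta_1$ meets some lobe of $\eta_2$; say $\bar\lambda_1\cap\bar\lambda_2\neq\emptyset$, where $\bar\lambda_i$ is freely homotopic to the lobe $\lambda_i\subseteq e_i$. Since $\lambda_1\cap\lambda_2\subseteq e_1\cap e_2$, it is enough to show that $\bar\lambda_1\cap\bar\lambda_2\neq\emptyset$ forces $\lambda_1\cap\lambda_2\neq\emptyset$.

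For this I would pass to the universal cover $\PD$ of $\BM$ and run a linking argument at the circle at infinity. By Theorem~\ref{thm:prelim}(i) the freely homotopic curves $\bar\lambda_i$ and $\lambda_i$ have one and the same closed geodesic representative $\delta_i$. Fix a lift $\widehat\delta_i$ of $\delta_i$ in $\PD$; by Theorem~\ref{thm:prelim}(ii) there is a lift of $\bar\lambda_i$ and a lift of $\lambda_i$ each sharing the two endpoints of $\widehat\delta_i$ on $\partial\PD$, and since this correspondence is natural under deck transformations, every lift of $\bar\lambda_i$ has the same pair of endpoints at infinity as some lift of $\lambda_i$ (the one attached to the corresponding lift of $\delta_i$). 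Now choose a point $r\in\bar\lambda_1\cap\bar\lambda_2$ at which $\bar\lambda_1$ and $\bar\lambda_2$ cross transversally, lift it to $\widetilde r$, and let $L_1\ni\widetilde r$ and $L_2\ni\widetilde r$ be the corresponding lifts of $\bar\lambda_1$ and $\bar\lambda_2$; these are properly embedded lines in $\PD$ meeting at $\widetilde r$. Let $\widehat\lambda_1,\widehat\lambda_2$ be lifts of $\lambda_1,\lambda_2$ with $\partial\widehat\lambda_i=\partial L_i$. If the endpoint pairs $\partial L_1$ and $\partial L_2$ link around $\partial\PD$, then $\widehat\lambda_1$ and $\widehat\lambda_2$ are properly embedded lines with the same linked pairs of endpoints, so they must intersect; projecting to $\BM$ yields a point of $\lambda_1\cap\lambda_2$, which completes the reduction.

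Thus everything rests on the one delicate point: that the chosen intersection of $\bar\lambda_1$ with $\bar\lambda_2$ is \emph{essential}, i.e.\ that $\partial L_1$ and $\partial L_2$ link. For two honest closed geodesics this is automatic — distinct geodesic lines in $\PD$ meet in at most one point, and when they meet their endpoint pairs link, which is essentially the content of Theorem~\ref{thm:trans} — but a geodesic loop carries a corner, so a priori two geodesic loops could bound an inessential bigon and meet at points whose lifts do not link. Ruling this out is, I expect, the main obstacle. I would handle it through the \emph{uniqueness} of the geodesic figure eight: since $\eta_i$ is the unique (and in fact the shortest) figure eight in its free homotopy class, it is in minimal position, so $\eta_1$ and $\eta_2$ have no bigons and every intersection point of $\bar\lambda_1$ with $\bar\lambda_2$ is essential. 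This lets one pick $r$ — sliding it to an honest transversal crossing should the two geodesic loops happen to overlap along an arc rather than cross cleanly — so that $\partial L_1$ and $\partial L_2$ link, and the computation of the previous paragraph then closes the argument.
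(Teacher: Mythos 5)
Your approach is genuinely different from the paper's --- you reduce to the two lobes of each figure eight, while the paper works with the whole closed geodesics $\eta_i$ --- and the ``delicate point'' you flagged is not merely delicate: it is where the argument fails. The paper splits into two cases. If $\eta_1\ne\eta_2$ they are distinct closed geodesics, hence meet transversally wherever they meet, and Theorem~\ref{thm:trans} applied directly to the whole curves $e_i$ and $\eta_i$ gives $e_1\cap e_2\ne\emptyset$. The remaining (and crucial) case is $\eta_1=\eta_2$: there the paper lifts the two geodesic branches through the double point $x_0$ to two transversal geodesic \emph{lines} $\widehat\eta_1,\widehat\eta_2$ in $\PD$ --- both lifts of the one closed geodesic --- whose linked ideal endpoints force, via Theorem~\ref{thm:prelim}(ii), the corresponding lifts of $e_1$ and $e_2$ to cross.

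Your lobe-by-lobe plan is stuck precisely when $\eta_1=\eta_2$. By Lemma~\ref{lem:contained} (and the observation opening the proof of Proposition~\ref{prop:P1P2}), the two lobes of $\eta_1$ together with their product are freely homotopic to the three boundary geodesics of the geodesic pair of pants $\overline{P}$ containing $\eta_1$, and those boundary geodesics are pairwise disjoint simple closed geodesics. So for any choice of lobes $\bar\lambda_1\subset\eta_1$ and $\bar\lambda_2\subset\eta_2=\eta_1$, the ideal endpoint pairs $\partial L_1,\partial L_2$ --- which are the fixed-point pairs of the associated deck transformations, i.e.\ the endpoints of lifts of two disjoint (or identical) boundary geodesics of $\overline{P}$ --- never link. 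There is no lobe pair whose lifts link, so the linking step has nothing to run on, even though $e_1\cap e_2\ne\emptyset$ still holds (it simply cannot be detected lobe against lobe). The proposed patch via uniqueness/shortest/``minimal position'' cannot rescue this: $\eta_1$ and $\eta_2$ are the very same curve, so there is nothing to put in minimal position, and a lobe is a geodesic loop with a corner, which, unlike a closed geodesic, can perfectly well have inessential crossings with another such loop. The resolution has to see both branches of the full geodesic at the double point at once, as the paper does. (A secondary, smaller gap: that the free homotopy from $e$ to $\eta$ matches lobes to lobes is asserted but not proved; it is believable via the ambient isotopy $h$ in the proof of Lemma~\ref{lem:contained}, but it does not follow merely from a free homotopy of closed curves, which need not preserve the figure-eight structure at intermediate times.)
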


\begin{figure}[h]
\centering
\includegraphics{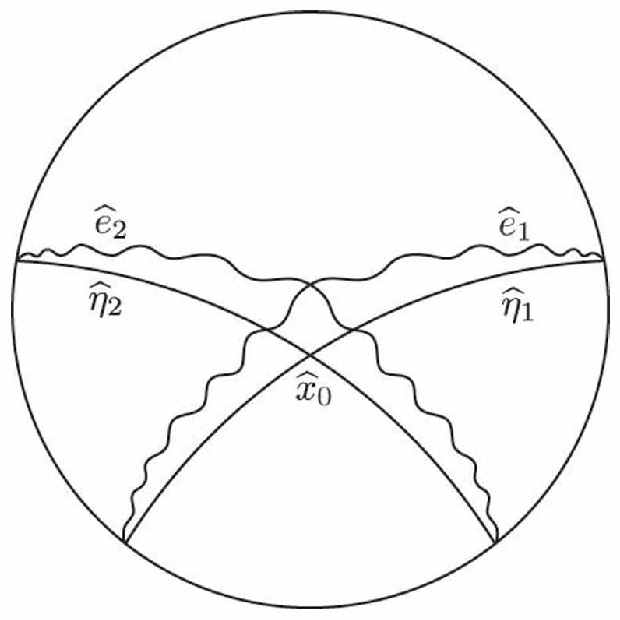}
\caption{The Poincar\'e disc $\PD$. }
\label{fig:Poincare}
\end{figure}
\begin{proof}
Two geodesic curves with non-empty intersection either have transversal intersection or coincide.
If $\eta_1$ and $\eta_2$ have transversal intersection, then the result follows immediately from Theorem \ref{thm:trans}, since $e_1$ and $e_2$ are in particular non-contractible closed curves.
Assume that $\eta_1$ and $\eta_2$ coincide.
Denote by $x_0$ the unique transversal double point of the figure eight $\eta_1$.
Denote by $\widehat{x}_0$ a lift of $x_0$ to the universal cover of the surface, modeled by the Poincar\'e disc $\PD$.
In a small neighborhood of $x_0$ there are two geodesic segments with transversal intersection.
Lift this neighborhood to a neighborhood of $\widehat{x}_0$ in the universal cover, and extend these two geodesics uniquely in $\PD$.
Denote them by $\widehat{\eta}_1$ and $\widehat{\eta}_2$.
By Theorem \ref{thm:prelim}, there exist lifts of $e_1$ and $e_2$, denoted by $\widehat{e}_1$ and $\widehat{e}_2$ respectively,
such that for $i=1,2$, $\widehat{\eta}_i$ and $\widehat{e}_i$ have the same endpoints at the circle at infinity (Figure \ref{fig:Poincare}).
Since $\widehat{\eta}_1$ and $\widehat{\eta}_2$ have transversal intersection, the endpoints of $\widehat{\eta}_1$ separate the endpoints of $\widehat{\eta}_2$.
As a result,  $\widehat{e}_1$ and $\widehat{e}_2$ must intersect in $\PD$, so their projection on the surface must also intersect as required.
\end{proof}

\begin{lem}\label{lem:coincide}
Let $P$ be a pair of pants, and let $e_1,e_2$ be two figure eights contained in $P$, such that for each $i$, $P\backslash e_i$
is a disjoint union of three cylinders.
Then $e_1\cap e_2\ne\emptyset$.
\end{lem}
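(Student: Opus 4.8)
The plan is to reduce this to the previous two lemmas by understanding the homotopy classes of figure eights that cut a pair of pants into three cylinders. First I would pass to the geodesic representatives: let $\eta_1$, $\eta_2$ be the unique closed geodesic figure eights freely homotopic to $e_1$ and $e_2$ inside the hyperbolic pair of pants $P$ (with its geodesic boundary). By the argument in Lemma~\ref{lem:contained}, each $\eta_i$ is contained in $P$ and $P\backslash\eta_i$ is again a disjoint union of three cylinders, so it suffices by Lemma~\ref{lem:e1e2} to show $\eta_1\cap\eta_2\ne\emptyset$. If they already intersect we are done, so assume $\eta_1\cap\eta_2=\emptyset$ and derive a contradiction.

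The key step is a combinatorial/topological classification: a figure eight $e$ in a pair of pants $P$ with $P\backslash e$ a union of three cylinders must, up to isotopy, be the ``spine'' obtained by taking two of the three boundary circles, connecting them by an arc, and resolving — equivalently its two embedded loops are freely homotopic to two of the three boundary components of $P$, with the double point accounting for the third. There are exactly three such isotopy classes, indexed by which pair of boundary circles the two loops wrap. So I would next argue that if $\eta_1$ and $\eta_2$ are disjoint geodesic figure eights of this type, then cutting along $\eta_1$ gives three cylinders, and $\eta_2$ (being disjoint) lies in the closure of one of them; but a single cylinder cannot contain a figure eight whose two loops are non-isotopic non-contractible curves (the fundamental group of a cylinder is $\mathbb{Z}$, so both loops would be homotopic to the core, forcing the two loops of $\eta_2$ to be isotopic to each other and hence $\eta_2$ not to separate $P$ into three cylinders) — contradiction. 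Alternatively, and perhaps more cleanly, I would note that the three isotopy classes of such figure eights pairwise have non-zero geometric intersection number, since any two of them ``share'' a boundary-parallel loop in an essential way, so their geodesic representatives must cross.

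The main obstacle I expect is making the classification rigorous: showing that a figure eight cutting $P$ into three cylinders is forced into one of these three standard classes, and that the geometric intersection number of any two distinct standard classes is positive. This is an Euler-characteristic and fundamental-group bookkeeping argument — each of the three complementary cylinders must have one boundary on $\partial P$ and one on $e$, which pins down how $e$ sits — but care is needed because $e_1$ and $e_2$ may come from different Morse functions, hence a priori different pair-of-pants decompositions, so the only common structure is the single pair of pants $P$ itself. Once the classification is in hand, the disjointness-contradiction is short, and Lemma~\ref{lem:e1e2} closes the argument.
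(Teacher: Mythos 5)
Your core contradiction is exactly the paper's: assume $e_1\cap e_2=\emptyset$, observe that the connected set $e_2$ must then sit inside one of the three cylinders $C$ of $P\setminus e_1$, and derive a contradiction from the fact that $\pi_1(C)\cong\mathbb{Z}$ — the two embedded loops of $e_2$ would both have to be contractible or isotopic to the core of $C$, so they could not be parallel to two distinct boundary circles of $P$, which is what the three-cylinder condition on $P\setminus e_2$ forces. That is the whole of the paper's proof. Everything else you propose is superfluous scaffolding: the detour through geodesic representatives via the arguments of Lemmas~\ref{lem:contained} and \ref{lem:e1e2} is not needed here, and in fact does not quite match the statement of Lemma~\ref{lem:coincide}, which asserts nothing about a hyperbolic metric or geodesic boundary on $P$ — it is a purely topological fact about an arbitrary pair of pants and arbitrary figure eights (the paper reserves the geodesic machinery for Proposition~\ref{prop:P1P2}, where the two pairs of pants need not coincide). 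Likewise the ``three isotopy classes'' classification you flag as the key step and worry about making rigorous is a red herring: your own disjointness contradiction never invokes it, and neither does the paper. So the proposal is correct, but you should strip it down to the one-paragraph topological argument (connectedness of $e_2$, the complement being cylinders, $\pi_1$ of a cylinder is $\mathbb{Z}$) and drop both the geodesic reduction and the classification; that is precisely the paper's proof.
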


\begin{proof}
Assume that $e_1\cap e_2=\emptyset$. Since $e_2$ is connected, it is contained in one of the connected components of $P\backslash e_1$.
By our assumption this component is a cylinder, denoted by $C$, so $e_2\subset C$.
The figure eight $e_2$ is composed of two disjoint simple loops with a unique intersection point.
But in a cylinder, each two non-contractible simple loops are freely homotopic to each other.
Hence, $e_2$ bounds a disc, in contradiction to the assumption that $P\backslash e_2$ is a disjoint union of cylinders.
Therefore $e_1\cap e_2\ne \emptyset$.
\end{proof}

Before the next proposition we wish to emphasize that we do not regard the boundary of a pair of pants as part of it,
i.e. $\partial P\cap P=\emptyset$.

\begin{prop}\label{prop:P1P2trans}
Let $P_1$,$P_2\subset\BM$ be two pairs of pants with geodesic boundaries such that $P_1\cap P_2\ne\emptyset$.
Then either the boundary components of $P_1$ and $P_2$ coincide,
or there exists a boundary component of $P_1$ that has transversal intersection with a boundary component of $P_2$.
\end{prop}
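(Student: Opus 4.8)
The plan is to argue by contradiction. I would assume that no boundary component of $P_1$ has transversal intersection with a boundary component of $P_2$, and show that this forces $P_1=P_2$, so that in particular their boundary components coincide. Write $\partial P_1=\alpha_1\cup\alpha_2\cup\alpha_3$ and $\partial P_2=\beta_1\cup\beta_2\cup\beta_3$, where all the $\alpha_i,\beta_j$ are simple closed geodesics (pairwise disjoint within each pants, since the pants are embedded). The first step is the elementary observation that two distinct simple closed geodesics on the negatively curved surface $\BM$ cross transversally wherever they meet, since two geodesics tangent at a point coincide by uniqueness of the geodesic with prescribed initial data. Hence the contradiction hypothesis is equivalent to the statement that for every $i$ and $j$, either $\alpha_i=\beta_j$ as subsets of $\BM$ or $\alpha_i\cap\beta_j=\emptyset$.

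The key geometric lemma I would then establish is that no simple closed geodesic of $\BM$ lies in the interior of a pair of pants $P\subset\BM$ with geodesic boundary. Indeed, such a geodesic $\gamma$ would be essential in $P$ — it is non-contractible in $\BM$, hence in $P$ — so, using the standard topological fact that every essential simple closed curve in a pair of pants is isotopic to a boundary component, $\gamma$ would be freely homotopic in $\BM$ to one of the boundary geodesics of $P$. Uniqueness of the closed geodesic in a free homotopy class (Theorem \ref{thm:prelim}(i)) would then force $\gamma$ to equal that boundary geodesic, contradicting the convention $P\cap\partial P=\emptyset$.

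Armed with the lemma I would run a separation argument. Fix $i$. If $\alpha_i$ equals none of the $\beta_j$, then by the first paragraph $\alpha_i\cap\partial P_2=\emptyset$, so the connected curve $\alpha_i$ is contained in a single connected component of $\BM\setminus\partial P_2$; by the lemma this component is not $P_2$, so $\alpha_i\cap P_2=\emptyset$. If instead $\alpha_i=\beta_j$ for some $j$, then $\alpha_i\cap P_2=\beta_j\cap P_2=\emptyset$ because $\partial P_2\cap P_2=\emptyset$. Either way $\alpha_i\cap P_2=\emptyset$, hence $\partial P_1\cap P_2=\emptyset$. Therefore the connected set $P_2$ lies in a single component of $\BM\setminus\partial P_1$, and since $P_1\cap P_2\neq\emptyset$ and $P_1$ is itself such a component, $P_2\subset P_1$. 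The contradiction hypothesis and the condition $P_1\cap P_2\neq\emptyset$ are symmetric in $P_1$ and $P_2$, so the same argument gives $P_1\subset P_2$; hence $P_1=P_2$ and in particular $\partial P_1=\partial P_2$.

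I expect the main obstacle to be the geometric lemma — pinning down and correctly citing the topological fact that essential simple closed curves in a pair of pants are peripheral, and combining it cleanly with geodesic uniqueness while keeping the interior-versus-boundary conventions straight. Everything else is elementary surface topology of separation by a system of disjoint simple closed curves.
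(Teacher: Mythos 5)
Your proof is correct and rests on the same two geometric facts that the paper uses: distinct simple closed geodesics can meet only transversally (since tangency forces coincidence by uniqueness of geodesics with given initial data), and a simple closed geodesic contained in the open interior of a geodesic pair of pants is necessarily peripheral and hence, by uniqueness of the closed geodesic in its free homotopy class (Theorem~\ref{thm:prelim}), must coincide with a boundary geodesic -- a contradiction. The difference is purely organizational. The paper argues directly: it first produces, via a path from a point $x\in P_1\cap P_2$ to a point of $(\partial P_1\setminus\partial P_2)\cup(\partial P_2\setminus\partial P_1)$, a boundary component $\delta$ of one pants that enters the open interior of the other, and then shows $\delta$ must exit, crossing a boundary geodesic transversally. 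You argue by contradiction and symmetrically: no transversal crossing forces $\partial P_1\cap P_2=\emptyset$ and $\partial P_2\cap P_1=\emptyset$, and the separation argument (each open pants is a connected component of the complement of its own boundary multicurve, and $P_1\cap P_2\ne\emptyset$) gives $P_2\subset P_1\subset P_2$, so $P_1=P_2$. Both are valid; your route avoids the explicit path-construction step and in fact yields the marginally sharper conclusion $P_1=P_2$, which under $P_1\cap P_2\ne\emptyset$ is equivalent to the coincidence of boundary components.
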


\begin{proof}
We will assume that not all the boundary components of $P_1$ coincide with those of $P_2$,
i.e. $(\partial P_1\backslash\partial P_2)\cup(\partial P_2\backslash\partial P_1)\ne\emptyset$.
We will first show that $(\partial P_1\cap P_2)\cup(\partial P_2\cap P_1)\ne\emptyset$.
Choose $x\in P_1\cap P_2$ and $y\in (\partial P_1\backslash\partial P_2)\cup(\partial P_2\backslash\partial P_1)$.
Assume without loss of generality that $y\in \partial P_1\backslash\partial P_2$.
If $y\in P_2$, then $y\in (\partial P_1\cap P_2)\cup(\partial P_2\cap P_1)$ as required.
Otherwise, i.e. $y\notin P_2$, choose a path $\gamma:[0,1]\rightarrow P_1\cup \partial P_1$ such that $\gamma(0)=x$, $\gamma([0,1))\subset P_1$,
and $\gamma(1)=y\in \partial P_1$.
By the above assumptions, $\gamma(1)\notin P_2\cup\partial P_2$ and $\gamma(0)=x\in P_1\cap P_2\subset P_2$.
Hence, there must exist $t_0\in(0,1)$ such that $\gamma(t_0)\in\partial P_2$.
But $\gamma([0,1))\subset P_1$, so $\gamma(t_0)\in\partial P_2\cap P_1\subset (\partial P_1\cap P_2)\cup(\partial P_2\cap P_1)$ as required.
\\
Now, choose $z\in (\partial P_1\cap P_2)\cup(\partial P_2\cap P_1)$ and assume without loss of generality that $z\in\partial P_1\cap P_2$.
Denote by $\delta$ the boundary component in $\partial P_1$ that contains $z$.
Note that $\delta$ does not coincide with any of the boundary components of $P_2$
since $\partial P_2\cap P_2=\emptyset$ and if $\delta\subset\partial P_2$ then $\delta\cap P_2=\emptyset$ but $z\in \delta\cap P_2$.
We will show that $\delta\cap\partial P_2\ne\emptyset$.
Suppose that $\delta\cap\partial P_2=\emptyset$, then since $z\in\delta\cap P_2$ we get that $\delta$ must be contained in $P_2$.
The boundary component $\delta$ is a simple closed curve,
and topologically, a pair of pants can be viewed as a sphere with three points removed,
so $\delta$ must bound a disc or a punctured disk.
Hence, $\delta$ is either contractible, or freely homotopic to one of the boundary components of $P_2$.
But $\delta$ is a geodesic, thereby it is not contractible,
and if it is homotopic to one of the geodesic boundary components of $P_2$ then they must coincide by Theorem \ref{thm:prelim},
in contradiction to the choice of $\delta$.
Therefore, $\delta\cap\partial P_2\ne\emptyset$.
If two geodesic curves intersect then they either coincide or have transversal intersection.
We have already shown that they do not coincide, so the result follows.

\end{proof}

\begin{figure}[t]
\centering
\includegraphics{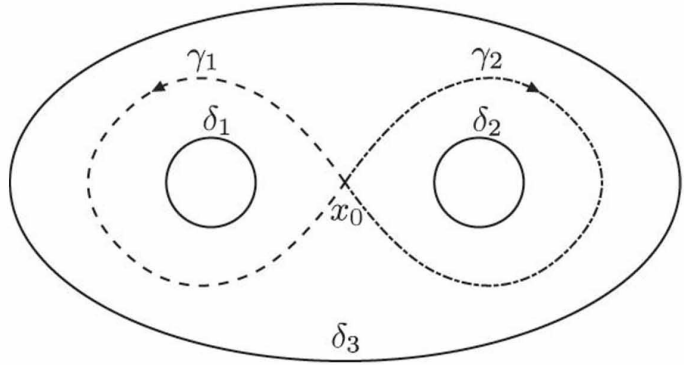}
\caption{$\gamma_1$ and $\gamma_2$ are freely homotopic to $\delta_1$ and $\delta_2$ respectively.
$\gamma_3:=\gamma_1*\gamma_2^{-1}$ is freely homotopic to $\delta_3$. }
\label{fig:Fig8}
\end{figure}

\begin{prop}\label{prop:P1P2}
Let $P_1,P_2\subset\BM$ be two pairs of pants with geodesic boundaries, such that $P_1\cap P_2\ne \emptyset$.
Let $e_1,e_2$ be two figure eights contained in $P_1$ and $P_2$, respectively, such that for $i=1,2$ $P_i\backslash e_i$ is a disjoint union of three cylinders.
Then $e_1\cap e_2\ne \emptyset$.
\end{prop}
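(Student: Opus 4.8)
The plan is to reduce Proposition \ref{prop:P1P2} to the earlier lemmas by a case analysis on how the boundary geodesics of $P_1$ and $P_2$ interact, which is exactly the dichotomy supplied by Proposition \ref{prop:P1P2trans}. First I would apply Proposition \ref{prop:P1P2trans} to the hypothesis $P_1\cap P_2\ne\emptyset$, obtaining two cases: either every boundary component of $P_1$ coincides with a boundary component of $P_2$, or some boundary geodesic of $P_1$ has transversal intersection with some boundary geodesic of $P_2$.

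In the first case, the two pairs of pants have the same geodesic boundary, so (since $\BM\setminus\bigcup\delta_i$ decomposes into pairs of pants, and a pair of pants is determined up to the ambient surface by its boundary) $P_1$ and $P_2$ must actually be the \emph{same} pair of pants $P$. Now $e_1$ and $e_2$ are both figure eights contained in $P$ with $P\setminus e_i$ a disjoint union of three cylinders, so Lemma \ref{lem:coincide} applies verbatim and gives $e_1\cap e_2\ne\emptyset$.

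In the second case I would pass to the geodesic figure eights. Let $\eta_1,\eta_2$ be the unique geodesic figure eights freely homotopic to $e_1,e_2$. By Lemma \ref{lem:contained}, $\eta_i$ is contained in $\overline{P_i}=P_i$ (the pairs of pants here already have geodesic boundary), and $P_i\setminus\eta_i$ is a disjoint union of three cylinders; moreover $\eta_i$ is built (in the proof of that lemma) from arcs of the boundary geodesics of $P_i$, or at any rate is the geodesic representative of the core figure eight of $P_i$, so its two constituent simple loops are freely homotopic to two of the three boundary geodesics of $P_i$. The transversal intersection of a boundary geodesic of $P_1$ with one of $P_2$ then forces, via a crossing/linking argument in the universal cover (endpoints on the circle at infinity interleave, exactly as in the proof of Lemma \ref{lem:e1e2}), that $\eta_1$ and $\eta_2$ must intersect: a boundary geodesic of $P_1$ that crosses $\partial P_2$ cannot be disjoint from the interior structure $\eta_2$ which separates $P_2$ into cylinders, because $\eta_2$ is homotopic to those boundary components and their lifts share endpoints at infinity. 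Once $\eta_1\cap\eta_2\ne\emptyset$ is established, Lemma \ref{lem:e1e2} immediately yields $e_1\cap e_2\ne\emptyset$.

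The main obstacle is the second case: making precise why a transversal crossing of the \emph{boundary} geodesics propagates to a crossing of the \emph{interior} geodesic figure eights $\eta_1,\eta_2$. The cleanest route is to work in the Poincar\'e disc: fix lifts so that a lift of the crossing boundary geodesic of $P_1$ and a lift of the crossing boundary geodesic of $P_2$ actually meet, note that $\eta_1$ has a lift sharing endpoints at infinity with that boundary geodesic of $P_1$ (its loops are boundary-parallel) and likewise for $\eta_2$, so the four endpoints at infinity interleave; interleaving of endpoints of complete geodesics in $\PD$ forces an honest intersection, and projecting down — together with Lemma \ref{lem:e1e2} to rule out the degenerate ``they coincide'' subcase — finishes the argument. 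I would take care to invoke Lemma \ref{lem:contained} to guarantee $\eta_i\subset P_i$ so that there is no interference from the rest of $\BM$, and to handle the sub-case where two boundary components of a pair of pants are isotopic (so $\eta_i$ has a loop winding twice), which is already accounted for in Lemma \ref{lem:contained}.
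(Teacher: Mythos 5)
Your case split and your handling of the coincidence case are exactly the paper's (apply Proposition~\ref{prop:P1P2trans}, then Lemma~\ref{lem:coincide}), but in the transversal case you take a longer and shakier route than necessary. The paper first records the key observation that if $P\setminus e$ is a disjoint union of three cylinders, then the two simple loops $\gamma_1,\gamma_2\subset e$ together with the concatenation $\gamma_3=\gamma_1*\gamma_2^{-1}$ are freely homotopic to the three geodesic boundary components of $P$. It then picks $\gamma_k\subset e_1$ homotopic to $\delta$ and $\gamma'_l\subset e_2$ homotopic to $\delta'$, and applies Theorem~\ref{thm:trans} \emph{once}, directly to these two curves: their geodesic representatives $\delta$, $\delta'$ intersect transversally, hence $\gamma_k\cap\gamma'_l\ne\emptyset$, hence $e_1\cap e_2\ne\emptyset$. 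No geodesic figure eights $\eta_i$ are introduced at all.

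Your detour through $\eta_1,\eta_2$ and Lemma~\ref{lem:e1e2} creates two real problems. First, you invoke Lemma~\ref{lem:contained} to get $\eta_i\subset P_i$ and the three-cylinder property, but that lemma is stated and proved only for the specific figure eights $e_i$ of a figure eight collection and their associated $\overline{P}_i$; it is not a general fact about arbitrary figure eights in arbitrary geodesic pairs of pants, which is the hypothesis of Proposition~\ref{prop:P1P2}. You would need to re-prove it in this generality, whereas the paper's argument simply never needs it. Second, the sentence ``$\eta_1$ has a lift sharing endpoints at infinity with that boundary geodesic of $P_1$'' is not well formed: a figure eight is not a simple closed curve, so it has no single lift with two ideal endpoints. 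What is true (and what Theorem~\ref{thm:prelim}(ii) gives) is that a \emph{constituent loop} of $\eta_1$ has such a lift. Once you phrase it that way, you are in effect re-deriving Theorem~\ref{thm:trans} by hand for the loops of $\eta_i$ — and at that point you may as well apply Theorem~\ref{thm:trans} to the loops of $e_i$ themselves and skip $\eta_i$ and Lemma~\ref{lem:e1e2} entirely, which is precisely what the paper does. So the underlying idea is sound, but the path you chose is both longer and leans on a lemma that does not apply verbatim; the direct application of Theorem~\ref{thm:trans} to $\gamma_k\subset e_1$ and $\gamma'_l\subset e_2$ closes the argument cleanly.
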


\begin{proof}
We will first make the following observation.
Let $e$ be a figure eight contained in a pair of pants $P$ with geodesic boundaries such that $P\backslash e$ is a disjoint union of three cylinders.
Let $x_0$ be the unique transversal intersection point of the figure eight $e$.
The figure eight can be divided into two simple closed curves $\gamma_1$ and $\gamma_2$ with endpoints at $x_0$.
Define $\gamma_3$ to be the (non-smooth) curve $\gamma_1$ concatenated with $\gamma_2$ in reverse orientation.
Since $P\backslash e$ is a disjoint union of three cylinders,
we get that the curves $\gamma_1$,$\gamma_2$ and $\gamma_3$ are freely homotopic to the three boundary components of $P$ (Figure \ref{fig:Fig8}).
\par
Now, let $P_1$ and $P_2$ be two pairs of pants with geodesic boundaries such that $P_1\cap P_2\ne\emptyset$.
If $P_1$ and $P_2$ coincide then the result follows from Lemma \ref{lem:coincide}.
In the case that $P_1$ and $P_2$ do not coincide,
then by Proposition \ref{prop:P1P2trans} there exists a boundary component $\delta$ of $P_1$ that has transversal intersection with a boundary component $\delta'$ of $P_2$.
Let $\gamma_i$ and $\gamma_i'$, $i=1,2,3$ be the closed curves corresponding to the figure eights $e_1$ and $e_2$ respectively, as defined above.
Let $k,l\in\{1,2,3\}$ be such that the curves $\gamma_k$ and $\gamma_l'$ are freely homotopic to $\delta$ and $\delta'$ respectively.
By Theorem \ref{thm:trans} we get that $\gamma_k$ intersects $\gamma_l'$ and since $\gamma_k$ and $\gamma_l'$ are contained in $e_1$ and $e_2$ respectively,
we get that $e_1\cap e_2\ne\emptyset$ as required.
\end{proof}

We will need the following definitions in order to state Hall's marriage theorem.
\begin{defn}
Let $\mathcal{S}=\{S_1,...,S_n\}$ be a collection of finite subsets of some larger set $X$.
A \textdf{system of distinct representatives} is a set $R=\{r_1,...,r_n\}$ of pairwise distinct elements of $X$ with the property that for each $i=1,...,n$, $r_i\in S_i$.
\\
$\mathcal{S}$ satisfies the \textdf{marriage condition} if for any subset $\mathcal{T}=\{T_i\}$ of $\mathcal{S}$, $|\bigcup T_i|\ge |\mathcal{T}|$, i.e. any $k$ subsets taken together have at least $k$ elements.
\end{defn}

\begin{thm}\label{thm:Hall}
\textdf{Hall's marriage theorem} \cite{Hall}. Let $\mathcal{S}=\{S_1,...,S_n\}$ be a collection of finite subsets of some larger set.
Then there exists a system of distinct representatives of $\mathcal{S}$ if and only if $\mathcal{S}$ satisfies the marriage condition.
\end{thm}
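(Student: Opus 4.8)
The plan is to prove the two implications separately, with the forward implication an immediate observation and the converse by strong induction on $n=|\mathcal{S}|$. For the ``only if'' direction: if $R=\{r_1,\dots,r_n\}$ is a system of distinct representatives with $r_i\in S_i$, then for any subcollection $\mathcal{T}=\{S_{i_1},\dots,S_{i_k}\}$ the representatives $r_{i_1},\dots,r_{i_k}$ are $k$ pairwise distinct elements of $\bigcup\mathcal{T}$, so $|\bigcup\mathcal{T}|\ge k=|\mathcal{T}|$ and the marriage condition holds.

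For the ``if'' direction I would induct on $n$. The base case $n=1$ is trivial: applying the marriage condition to $\mathcal{T}=\{S_1\}$ forces $S_1\neq\emptyset$, and any of its elements is a representative. For the inductive step I would assume the statement for every collection of fewer than $n$ sets and split according to whether some proper nonempty subcollection of $\mathcal{S}$ is ``tight.'' \textbf{Case 1:} every nonempty $\mathcal{T}\subsetneq\mathcal{S}$ satisfies $|\bigcup\mathcal{T}|\ge|\mathcal{T}|+1$. Then pick any $x\in S_n$, set $r_n:=x$, and replace $S_i$ by $S_i':=S_i\setminus\{x\}$ for $i<n$. Any $k$ of the sets $S_1',\dots,S_{n-1}'$ have union of size at least $(k+1)-1=k$, since deleting $x$ removes at most one element; hence $\{S_1',\dots,S_{n-1}'\}$ still satisfies the marriage condition, and the inductive hypothesis yields a system of distinct representatives $\{r_1,\dots,r_{n-1}\}$ avoiding $x$, which together with $r_n$ does the job.

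\textbf{Case 2:} some nonempty $\mathcal{T}\subsetneq\mathcal{S}$ is tight, say after relabeling $\mathcal{T}=\{S_1,\dots,S_k\}$ with $1\le k<n$ and $|U|=k$ for $U:=S_1\cup\dots\cup S_k$. Since $\mathcal{T}$ inherits the marriage condition, the inductive hypothesis gives it a system of distinct representatives, which being $k$ distinct elements of the $k$-element set $U$ must equal $U$. For the remaining sets consider $\{S_{k+1}\setminus U,\dots,S_n\setminus U\}$: for any $m$ of them, say $\{S_{k+1}\setminus U,\dots,S_{k+m}\setminus U\}$, the marriage condition for $\mathcal{S}$ gives $|U\cup S_{k+1}\cup\dots\cup S_{k+m}|\ge k+m$, so $|(S_{k+1}\cup\dots\cup S_{k+m})\setminus U|\ge m$; thus this collection of $n-k<n$ sets satisfies the marriage condition and, by induction, has a system of distinct representatives disjoint from $U$. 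Concatenating the two systems produces one for all of $\mathcal{S}$, completing the induction.

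The step I expect to be the main obstacle is the dichotomy in the inductive step, and specifically Case 2: one has to spot that the right split is ``a tight proper subcollection exists or not,'' and then verify that after peeling off the tight block both pieces --- the inside block $\{S_1,\dots,S_k\}$ and the reduced outside block $\{S_{k+1}\setminus U,\dots,S_n\setminus U\}$ --- still meet the marriage condition, the latter verification being exactly where the hypothesis for the whole collection $\mathcal{S}$ (not merely for the subcollection) is needed, so that the inductive hypothesis applies to each.
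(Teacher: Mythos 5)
The paper does not prove Hall's marriage theorem; it states it and cites Hall's 1935 paper, using it as a black box in the proof of Theorem \ref{thm:intersex}. So there is no ``paper's own proof'' to compare against. That said, your proposal is a correct and complete proof. It is the standard Halmos--Vaughan argument: the forward direction is the trivial pigeonhole observation, and the converse goes by strong induction on $n$ with the dichotomy ``every proper nonempty subcollection has slack'' versus ``some proper nonempty subcollection is tight.'' In Case 1 you correctly note that deleting a single chosen element $x$ from the remaining sets drops each union size by at most one, so the slack absorbs the loss and the marriage condition survives for the $(n-1)$-set family. In Case 2 you correctly verify the marriage condition both for the tight block (it inherits it directly) and for the reduced outside block $\{S_{k+1}\setminus U,\dots,S_n\setminus U\}$, the latter by subtracting $|U|=k$ from the estimate $|U\cup S_{k+1}\cup\dots\cup S_{k+m}|\ge k+m$ furnished by the marriage condition on the full family --- which is, as you flag, exactly where the global hypothesis is needed. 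Both inductive applications are to families of strictly fewer than $n$ sets, and the two resulting systems of distinct representatives are disjoint by construction (the second avoids $U$, and the first exhausts $U$). Nothing is missing.
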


\begin{thm}\label{thm:intersex}
Let $F,G:\BM\rightarrow\real$ be generic Morse functions.
Let $\{e_1,...,e_{2g-2}\}$ and $\{f_1,...,f_{2g-2}\}$ be the figure eight collections of $F$ and $G$ respectively.
Then there exists a permutation $\sigma\in S(2g-2)$ such that $e_i\cap f_{\sigma(i)}\ne\emptyset$ for each $i=1,...,2g-2$.
\end{thm}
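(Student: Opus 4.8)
The plan is to set up a bipartite incidence structure between the two figure eight collections, verify Hall's marriage condition via a Gauss--Bonnet area count on the two pair of pants decompositions, and then apply Hall's marriage theorem (Theorem \ref{thm:Hall}) to extract the permutation $\sigma$.

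First I would fix once and for all an auxiliary metric $\rho$ of constant curvature $-1$ on $\BM$ (it exists since $g\ge 2$), and form the associated pair of pants decompositions $\{\overline{P}_1,\dots,\overline{P}_{2g-2}\}$ of $F$ and $\{\overline{Q}_1,\dots,\overline{Q}_{2g-2}\}$ of $G$ with respect to $\rho$, with geodesic separating curves $\{\delta_1,\dots,\delta_{3g-3}\}$ and $\{\delta_1',\dots,\delta_{3g-3}'\}$, so that $\BM\setminus\bigcup_k\delta_k=\bigsqcup_i\overline{P}_i$ and $\BM\setminus\bigcup_k\delta_k'=\bigsqcup_j\overline{Q}_j$. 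Index things so that $\overline{P}_i$ is the geodesic pair of pants corresponding to $e_i$ and $\overline{Q}_j$ the one corresponding to $f_j$, in the sense of Lemma \ref{lem:contained}. The key geometric claim is that $\overline{P}_i\cap\overline{Q}_j\ne\emptyset$ implies $e_i\cap f_j\ne\emptyset$: by Lemma \ref{lem:contained} the geodesic figure eight $\eta_i$ freely homotopic to $e_i$ is contained in $\overline{P}_i$ with $\overline{P}_i\setminus\eta_i$ a disjoint union of three cylinders, and likewise $\zeta_j\subset\overline{Q}_j$ for $f_j$; then Proposition \ref{prop:P1P2} applied to the pants $\overline{P}_i,\overline{Q}_j$ and the figure eights $\eta_i,\zeta_j$ gives $\eta_i\cap\zeta_j\ne\emptyset$, and Lemma \ref{lem:e1e2} upgrades this to $e_i\cap f_j\ne\emptyset$.

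Next I would set $S_i:=\{\,j : e_i\cap f_j\ne\emptyset\,\}\subseteq\{1,\dots,2g-2\}$, so by the previous step $S_i\supseteq\{\,j:\overline{P}_i\cap\overline{Q}_j\ne\emptyset\,\}$, and verify the marriage condition. Fix $T\subseteq\{1,\dots,2g-2\}$ and put $A:=\bigcup_{i\in T}\overline{P}_i$ (an open set) and $J:=\bigcup_{i\in T}\{\,j:\overline{P}_i\cap\overline{Q}_j\ne\emptyset\,\}\subseteq\bigcup_{i\in T}S_i$. Every point of $A$ either lies on some geodesic $\delta_k'$ or lies in a unique $\overline{Q}_j$, and in the latter case $\overline{P}_i\cap\overline{Q}_j\ne\emptyset$ for some $i\in T$, whence $j\in J$; thus $A\subseteq\bigl(\bigcup_{j\in J}\overline{Q}_j\bigr)\cup\bigl(\bigcup_k\delta_k'\bigr)$. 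Since each geodesic pair of pants has $\rho$-area $-2\pi\chi=2\pi$ while the $\delta_k'$ have measure zero, comparing $\rho$-areas gives $2\pi|T|=\mathrm{area}(A)\le 2\pi|J|$, so $|\bigcup_{i\in T}S_i|\ge|J|\ge|T|$. Hence $\{S_1,\dots,S_{2g-2}\}$ satisfies the marriage condition, and by Theorem \ref{thm:Hall} it admits a system of distinct representatives $r_1,\dots,r_{2g-2}$; these are $2g-2$ pairwise distinct elements of a set of size $2g-2$, so $\sigma(i):=r_i$ defines a permutation $\sigma\in S(2g-2)$ with $e_i\cap f_{\sigma(i)}\ne\emptyset$ for all $i$, as required.

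The hard part is the geometric implication ``$\overline{P}_i\cap\overline{Q}_j\ne\emptyset\Rightarrow e_i\cap f_j\ne\emptyset$'', which compresses most of the work of Sections \ref{sec:pants} and \ref{sec:eight}: one needs that the geodesic figure eight representatives really sit inside the corresponding geodesic pants (Lemma \ref{lem:contained}), that two meeting geodesic-boundary pants force a transversal intersection of boundary components or outright coincidence (Proposition \ref{prop:P1P2trans}), that this propagates to the geodesic figure eights via the universal-cover analysis (Proposition \ref{prop:P1P2}, Lemma \ref{lem:e1e2}, Lemma \ref{lem:coincide}), and finally that intersection of geodesic representatives pulls back to the original level-set figure eights. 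Once this is granted, the combinatorial half is essentially forced: the equality of $\rho$-areas of all geodesic pants — a direct consequence of Gauss--Bonnet and the geodesic boundary — is exactly what turns the ``$A$ is covered by the relevant $\overline{Q}_j$'s up to measure zero'' observation into the marriage inequality $|J|\ge|T|$.
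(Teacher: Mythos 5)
Your proposal is correct and follows essentially the same strategy as the paper's own proof: the same Hall's-marriage-theorem setup on the two geodesic pair of pants decompositions, the same Gauss--Bonnet equal-area count to verify the marriage condition, and the same chain Lemma \ref{lem:contained} $\to$ Proposition \ref{prop:P1P2} $\to$ Lemma \ref{lem:e1e2} to pass from intersecting pants to intersecting figure eights. The only differences are presentational (you establish the geometric implication before the combinatorics, and you are slightly more explicit about the measure-zero boundary set in the area comparison).
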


\begin{proof}
Let $\{\overline{P}_1,...,\overline{P}_{2g-2}\}$ and $\{\overline{Q}_1,...,\overline{Q}_{2g-2}\}$ be the pair of pants decompositions of $\BM$ associated with $F$ and $G$ respectively.
We will first show that there exists a permutation $\sigma\in S(2g-2)$ such that $\overline{P}_i\cap \overline{Q}_{\sigma(i)}\ne\emptyset$.
We will use Hall's marriage theorem in order to prove this.
Define for $i=1,...,2g-2$
\[ S_i:=\{j|\overline{Q}_j\cap \overline{P}_i\ne\emptyset\} \]
so $S_i$ contains the indices of pairs of pants in $\{\overline{Q}_1,...,\overline{Q}_{2g-2}\}$ that intersect $\overline{P}_i$.
Define $\mathcal{S}:=\{S_1,...,S_{2g-2}\}$.
We will prove the marriage condition for $\mathcal{S}$.
Let $\mathcal{T}=\{S_i | i\in J\}$ be a subset of $\mathcal{S}$, where $J\subset\{1,...,2g-2\}$.
Define $\mathcal{P}:=\{\overline{P}_i | i\in J\}$.
Note that the hyperbolic area of any pair of pants with geodesic boundaries is equal by Gauss-Bonnet to $\frac{1}{2g-2}Vol(\BM)$.
It follows that the union of pairs of pants in $\mathcal{P}$ covers a total area of $\frac{|J|}{2g-2}Vol(\BM)$.
Thereby, this union must intersect at least $|J|$ pairs of pants in $\{\overline{Q}_1,...,\overline{Q}_{2g-2}\}$.
Equivalently, $|\bigcup_{i\in J}S_i|\ge|J|=|\mathcal{T}|$ and the marriage condition is proved.
Thus, there exists a system of distinct representatives $R=\{r_1,...,r_{2g-2}\}$ such that for $i=1,...,2g-2$, $r_i\in S_i$.
We can now define a permutation $\sigma\in S(2g-2)$ by $\sigma(i):=r_i$ and for each $i=1,...,2g-2$, $\overline{P}_i\cap \overline{Q}_{\sigma(i)}\ne\emptyset$ as required.
\par
Now it is left to prove that $e_i\cap f_{\sigma(i)}\ne\emptyset$.
By Lemma \ref{lem:contained} the unique closed geodesics homotopic to $e_i$ and $f_{\sigma(i)}$, denoted by $\overline{e}_i$ and $\overline{f}_{\sigma(i)}$,
are contained in $\overline{P}_i$ and $\overline{Q}_{\sigma(i)}$ respectively.
Furthermore, $\overline{P}_i\backslash\overline{e}_i$ and $\overline{Q}_{\sigma(i)}\backslash \overline{f}_{\sigma(i)}$, are both a disjoint union of three cylinders.
By Proposition \ref{prop:P1P2} we get that $\overline{e}_i\cap \overline{f}_{\sigma(i)}\ne\emptyset$ and from Lemma \ref{lem:e1e2} we conclude that $e_i\cap f_{\sigma(i)}\ne\emptyset$.
This completes the proof.
\end{proof}

\ftsection[\secsize]{Monotonicity}{sec:monoton}

\begin{thm} \label{thm:monotone}
Let $F,G:\BM\rightarrow\real$ be generic Morse functions,
such that $F\le G$.
Then $\xi(F)\le\xi(G)$.
\end{thm}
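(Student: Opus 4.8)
The plan is to reduce $\xi$ to a finite sum over essential critical points and then apply the intersection theorem (Theorem \ref{thm:intersex}) pointwise. First I would observe that since $Vol(\BM)=2g-2>0$, the desired inequality $\xi(F)\le\xi(G)$ is equivalent to
\[ \int_{\BM}F\omega - \mu(\phi_F) \le \int_{\BM}G\omega - \mu(\phi_G). \]
Now $F$ trivially Poisson-commutes with itself, $\{F,F\}=0$, so $F\in{\mathcal F}$ and Py's formula (Theorem \ref{thm:pi2}) applies with $H=F$, giving $\mu(\phi_F)=\int_{\BM}F\omega-\sum_{x\in\ECP_F}F(x)$. Hence $\int_{\BM}F\omega-\mu(\phi_F)=\sum_{x\in\ECP_F}F(x)$, and likewise for $G$; equivalently, $\xi(F)=\tfrac{1}{Vol(\BM)}\sum_{x\in\ECP_F}F(x)$. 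So the theorem reduces to the clean inequality
\[ \sum_{x\in\ECP_F}F(x) \le \sum_{y\in\ECP_G}G(y). \]

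Next I would use the structure of the figure eight collections. Write $\ECP_F=\{x_1,\dots,x_{2g-2}\}$ with critical values $c_i=F(x_i)$, and $\ECP_G=\{y_1,\dots,y_{2g-2}\}$ with $d_j=G(y_j)$; let $\{e_1,\dots,e_{2g-2}\}$ and $\{f_1,\dots,f_{2g-2}\}$ be the corresponding figure eight collections. By definition $e_i$ is the connected component of $F^{-1}(c_i)$ through $x_i$, so $F\equiv c_i$ on $e_i$; similarly $G\equiv d_j$ on $f_j$. By Theorem \ref{thm:intersex} there is a permutation $\sigma\in S(2g-2)$ with $e_i\cap f_{\sigma(i)}\ne\emptyset$ for all $i$. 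Choosing any point $p_i\in e_i\cap f_{\sigma(i)}$ and using $F\le G$ pointwise gives
\[ F(x_i)=F(p_i)\le G(p_i)=G(y_{\sigma(i)}), \qquad i=1,\dots,2g-2. \]
Summing over $i$ and using that $\sigma$ is a bijection, $\sum_i F(x_i)\le\sum_i G(y_{\sigma(i)})=\sum_j G(y_j)$, which is exactly the required inequality; dividing by $Vol(\BM)$ completes the proof.

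Since the geometric content has already been established, the only genuinely substantive move here is the first reduction — recognizing that $\xi(F)$ collapses to $\tfrac{1}{Vol(\BM)}\sum_{x\in\ECP_F}F(x)$ once Py's formula is applied to $F$ itself — after which Theorem \ref{thm:intersex} does all the remaining work. Thus I do not expect a real obstacle at this stage; the only points requiring a little care are checking that Theorem \ref{thm:pi2} is legitimately applicable with $H=F$ (it is, since generic Morse functions are admissible and $\{F,F\}=0$) and using the level-set description of the figure eights correctly to read off the values of $F$ and $G$ at the chosen intersection points $p_i$.
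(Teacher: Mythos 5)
Your proposal is correct and follows exactly the paper's argument: both reduce $\xi(F)$ to $\tfrac{1}{Vol(\BM)}\sum_{x\in\ECP_F}F(x)$ via Theorem \ref{thm:pi2} applied with $H=F$, and then conclude by selecting intersection points $z_i\in e_i\cap f_{\sigma(i)}$ from the permutation given by Theorem \ref{thm:intersex} and using $F\le G$ pointwise. The only difference is that you spell out the self-commutation $\{F,F\}=0$ justifying the application of Py's formula, which the paper leaves implicit.
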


\begin{proof}
Let $\ECP_F=\{x_1,...,x_{2g-2}\}$ and $\ECP_G=\{y_1,...,y_{2g-2}\}$ be the sets of essential critical points of $F$ and $G$ respectively.
Using Definition \ref{def:QS} of $\xi$ and Theorem \ref{thm:pi2} we get that
\[ \xi(F)=\frac{1}{Vol(\BM)}\sum_{i=1}^{2g-2}F(x_i) \]
and
\[ \xi(G)=\frac{1}{Vol(\BM)}\sum_{i=1}^{2g-2}G(y_i) .\]
Let $\{e_1,...,e_{2g-2}\}$ and $\{f_1,...,f_{2g-2}\}$ be the figure eight collection of $F$ and $G$ respectively.
By Theorem \ref{thm:intersex} there exists a permutation $\sigma\in S(2g-2)$ such that for $i=1,...,2g-2$ we have $e_i\cap f_{\sigma(i)}\ne\emptyset$.
For each $i$, choose a point $z_i\in e_i\cap f_{\sigma(i)}$.
Then
\[F(x_i)= F(e_i)= F(z_i)\le G(z_i)= G(f_{\sigma(i)})= G(y_{\sigma(i)}),\]
which implies
\[ \xi(F)= \frac{1}{Vol(\BM)}\sum_{i=1}^{2g-2}F(x_i) \le \frac{1}{Vol(\BM)}\sum_{i=1}^{2g-2}G(y_{\sigma(i)})=\xi(G) \]
as required.

\end{proof}

\ftsection[\secsize]{Continuity}{sec:cont}
In this section we will examine the construction of Py's quasi-morphism as defined in \cite{Py} and show that it is continuous on time independent Hamiltonians,
with respect to the $C^2$-topology.
Lets recall the following definitions.
\\
\par
\begin{defn}
A \textdf{contact form} $\alpha$ on a $2n+1$ dimensional manifold $P$ is a 1-form with the property that $\alpha\wedge (d\alpha)^n\ne 0$.
\end{defn}

\begin{defn}
Given a contact form $\alpha$ on a manifold $P$,
the \textdf{Reeb vector field} $X$ is defined to be the unique vector field that satisfies $d\alpha(X,Z)=0$ for every $Z\in TP$ and $\alpha(X)=1$.
\end{defn}

\begin{defn}
A \textdf{principal G-bundle} is a fiber bundle $\pi:P\rightarrow \BM$ together with a smooth right action
$P \times G \rightarrow P$ by a Lie group G such that G preserves the fibers of P and acts freely and transitively on them.
The abstract fiber of the bundle is taken to be G itself.
\end{defn}

The following result is due to Banyaga \cite{Banyaga}.
\begin{thm}\label{thm:split}
Let $P$ be a closed connected manifold equipped with a contact form $\alpha$.
Let $\pi:P \rightarrow \BM$ be a principal $S^1$-bundle, such that the Reeb vector field on $P$ associated with
$\alpha$ coincides with the vector field generated by the action of $S^1$, parameterized by $\mathbb{R}/\mathbb{Z}$, on $P$.
Furthermore, we will assume that $\BM$ has a symplectic form $\omega$ that satisfies $\pi^*\omega=d\alpha$.
Then there exists a central extension by $S^1$ of the group $\mbox{Ham}(\BM,\omega)$,
\[ 0\rightarrow S^1 \rightarrow \mbox{Diff}_0(P,\alpha) \rightarrow \mbox{Ham}(\BM,\omega)\rightarrow 0 \]
where $\mbox{Diff}_0(P,\alpha)$ stands for the group of diffeomorphisms on $P$ which preserve $\alpha$ and are isotopic to the identity via an isotopy that preserves $\alpha$.
Moreover, when $\mbox{Ham}(\BM,\omega)$ is simply connected then the extension splits.
\end{thm}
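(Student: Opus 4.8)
The plan is to identify this sequence with Banyaga's prequantization central extension, and to obtain the splitting from the fact that the extension is already trivial at the Lie algebra level. First I would construct the homomorphism $\Pi\colon\mbox{Diff}_0(P,\alpha)\to\mbox{Ham}(\BM,\omega)$. If $\tilde f$ preserves $\alpha$ it preserves $d\alpha=\pi^*\omega$, and since the Reeb field $X$ is characterised by $i_X\alpha=1$, $i_Xd\alpha=0$, it preserves $X$; hence $\tilde f$ carries Reeb orbits to Reeb orbits, i.e.\ fibres of $\pi$ to fibres of $\pi$, so it descends to $f\in\mbox{Diff}(\BM)$ with $\pi\circ\tilde f=f\circ\pi$, and injectivity of $\pi^*$ on forms gives $f^*\omega=\omega$. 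To see that $f$ is Hamiltonian, take an $\alpha$-preserving isotopy $\{\tilde f_t\}$ from the identity to $\tilde f$ with generating field $\tilde Y_t$ and set $H_t:=i_{\tilde Y_t}\alpha$; from $L_{\tilde Y_t}\alpha=0$ one gets $i_{\tilde Y_t}d\alpha=-dH_t$ and then $i_X\,dH_t=-i_Xi_{\tilde Y_t}d\alpha=i_{\tilde Y_t}i_Xd\alpha=0$, so $H_t$ descends to a function $h_t$ on $\BM$ with $i_{\pi_*\tilde Y_t}\omega=-dh_t$. Thus $\{f_t\}$ is a Hamiltonian isotopy; functoriality of the descent makes $\Pi$ a homomorphism.

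Next, surjectivity and the kernel. The form $\alpha$ is a principal connection $1$-form for $\pi$ (it is $S^1$-invariant and $\alpha(X)=1$), with horizontal distribution $\ker\alpha$ and curvature $\omega$. For $h\in C^\infty(\BM)$ put $\widetilde{X_h}:=\mathrm{hor}(X_h)+(h\circ\pi)X$, the $\alpha$-horizontal lift of $X_h$ corrected by a vertical term; then $i_{\widetilde{X_h}}\alpha=h\circ\pi$ and $i_{\widetilde{X_h}}d\alpha=\pi^*(i_{X_h}\omega)=-d(h\circ\pi)$, so $L_{\widetilde{X_h}}\alpha=0$. Flowing the lifts of the generators of a Hamiltonian isotopy of a given $\phi\in\mbox{Ham}(\BM,\omega)$ produces an element of $\mbox{Diff}_0(P,\alpha)$ over $\phi$, which gives surjectivity. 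For the kernel, suppose $\Pi(\tilde f)=\mathrm{id}$; then $\tilde f$ preserves every fibre and, preserving $X$, commutes with the Reeb flow, i.e.\ with the $S^1$-action, so $\tilde f$ is fibrewise rotation by some $\theta\colon\BM\to\mathbb R/\mathbb Z$. Writing $\tilde f$ locally as a point-dependent reparametrisation of the Reeb flow and using that the Reeb flow preserves $\alpha$, one computes $\tilde f^*\alpha=\alpha+\pi^*d\hat\theta$ for a local lift $\hat\theta$ of $\theta$; hence $\tilde f^*\alpha=\alpha$ forces $d\theta=0$, and connectedness of $\BM$ forces $\theta$ constant. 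Conversely every constant rotation is a Reeb flow, hence $\alpha$-preserving, and the rotation isotopy joins it to the identity, so it lies in $\mbox{Diff}_0(P,\alpha)\cap\ker\Pi$. Therefore $\ker\Pi\cong S^1$, and centrality is immediate since every element of $\mbox{Diff}_0(P,\alpha)$ commutes with the $S^1$-action, in particular with $\ker\Pi$.

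For the splitting, restrict $h\mapsto\widetilde{X_h}$ to the subspace $C^\infty_0(\BM)\subset C^\infty(\BM)$ of functions with vanishing integral (against $\omega^n$, $2n=\dim\BM$). This gives a linear embedding into $\mathrm{Lie}\bigl(\mbox{Diff}_0(P,\alpha)\bigr)=\{\,\alpha\text{-preserving vector fields}\,\}$ that is complementary to the central line $\mathbb R X=\mathrm{Lie}(\ker\Pi)$, and it is a Lie algebra homomorphism: one has $[\widetilde{X_h},\widetilde{X_{h'}}]=\widetilde{X_{\{h,h'\}}}$, since both sides preserve $\alpha$ and have the same contraction $\{h,h'\}\circ\pi$ with $\alpha$, and $\{h,h'\}$ again has vanishing integral. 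Left-translating the image of this section over $\mbox{Diff}_0(P,\alpha)$ yields a left-invariant distribution which is integrable (its fibre is a subalgebra) and transverse to the $S^1$-fibres of $\Pi$; the leaf $L$ through the identity is a subgroup, and $\Pi|_L\colon L\to\mbox{Ham}(\BM,\omega)$ is a covering homomorphism. When $\mbox{Ham}(\BM,\omega)$ is simply connected this covering is an isomorphism, and $(\Pi|_L)^{-1}\colon\mbox{Ham}(\BM,\omega)\to\mbox{Diff}_0(P,\alpha)$ is the required splitting.

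The geometry above is essentially routine; the real work lies in the functional-analytic set-up needed to treat $\mbox{Diff}_0(P,\alpha)$ and $\mbox{Ham}(\BM,\omega)$ as infinite-dimensional Lie groups, and in particular in integrating the left-invariant distribution to the subgroup $L$ and verifying that $\Pi|_L\to\mbox{Ham}(\BM,\omega)$ really is a covering map --- equivalently, that the holonomy homomorphism $\pi_1\bigl(\mbox{Ham}(\BM,\omega)\bigr)\to S^1$ is the \emph{only} obstruction to a global splitting. Concretely this reduces to the homotopy-lifting property of $\Pi$ along paths of Hamiltonian isotopies, which I would establish from the smooth dependence of the lifted (non-autonomous) flow on its generating family. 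This is the step I expect to demand the most care.
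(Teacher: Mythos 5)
Your proof is correct and matches the paper's treatment. The paper itself does not reprove this theorem --- it cites Banyaga and only spells out the splitting, via the lifted field $V_t := \widehat{\mathrm{sgrad}\,H_t} + (H_t\circ\pi)X$ and the resulting map $\Theta$ on (the universal cover of) $\mathrm{Ham}(\BM,\omega)$; your $\widetilde{X_h}=\mathrm{hor}(X_h)+(h\circ\pi)X$ is exactly this lift, and your Frobenius leaf $L$ through the identity is the same subgroup as the image of $\Theta$, so the two splittings coincide, with the simply-connectedness hypothesis entering in the same place (to make the path-lifting, equivalently the covering $\Pi|_L$, single-valued).
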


In our case, $\BM$ is a closed surface of genus $g\ge 2$ hence $\mbox{Ham}(\BM,\omega)$ is simply connected [11,19] and the extension splits.
\\
\par
Let $\phi_{H_t}$ be a Hamiltonian diffeomorphism generated by the Hamiltonian $H_t$,
where $\int_{\BM}H_t\omega=0$ for every $t\in[0,1]$.
Define a vector field on $P$,
\[ V_t:= \widehat{sgradH_t}+(H_t\circ \pi)X,\]
where $X$ is the Reeb vector field on $P$,
and  $\widehat{sgradH_t}$ is the horizontal lift of $sgradH_t$, i.e. $\alpha(\widehat{sgradH_t})=0$ and $\pi_*(\widehat{sgradH_t})=sgradH_t$.
Define $\Theta(H_t)$ to be the flow generated by $V_t$.
It can be shown that $V_t$ preserves $\alpha$ and that the homotopy class with fixed endpoints of $\Theta(H_t)$ depends only on the homotopy class with fixed endpoints of the flow generated by $H_t$.
Hence, $\Theta:\widetilde{\mbox{Ham}}(\BM,\omega)\rightarrow \widetilde{\mbox{Diff}}_0(P,\alpha)$
(where $\widetilde{G}$ stands for the universal cover of $G$) is well defined.
Since $\mbox{Ham}(\BM,\omega)$ is simply connected then $\Theta$ can be defined on $\mbox{Ham}(\BM,\omega)$,
and by taking the time one map of $\Theta(\phi_{H_t})$
we obtain the splitting map from $\mbox{Ham}(\BM,\omega)$ to $\mbox{Diff}_0(P,\alpha)$ of the extension in Theorem \ref{thm:split}.
\\
\par
Let $H\in C^\infty(\BM)$, i.e. $H$ is a time independent Hamiltonian.
In order to apply $\Theta$ on the flow generated by $H$, we must first normalize $H$, i.e. $H\mapsto H-\frac{1}{Vol(M)}\int_{\BM}H\omega$.
The normalization mapping is obviously smooth.
The definition of the vector field $V_t$ involves $sgradH$,
hence $V_t$ is continuous as a function of $H$ with respect to the $C^2$-topology on $C^\infty(\BM)$, and so is the flow $(\Theta(H))_t$.
\\
\\
\textdf{Construction of the quasi-morphism.}
Let $\BM$ be a closed surface of genus $g\ge 2$, equipped with a symplectic form $\omega$.
We will assume that the total area of $\BM$ is equal to $2g-2$.
Choose a metric with constant negative curvature on $\BM$ such that its associated area form is equal to $\omega$.
Denote by $P$ the unit tangent bundle of $\BM$.
We will use the Poincar\'e disc $\PD$ as a model for the universal cover of $\BM$,
and denote by $S^1\PD$ the unit tangent bundle of $\PD$.
We can define a $S^1$-principal fiber bundle on $P$ and $S^1\PD$ by rotating each vector in the unit tangent bundle by the same angle as defined by the metric.
We will write $S^1_{\infty}$ for the circle at infinity of $\PD$ and $p_{\infty}:S^1\PD\rightarrow S^1_{\infty}$ for the natural projection,
sending each unit vector in the tangent bundle of $\PD$  to the limit at $S^1_{\infty}$ of the unique geodesic tangent to it.
Note that $p_{\infty}$ is a smooth mapping.
We will denote by $\pi:P\rightarrow\BM$ the natural projection.
Denote by $X$ the vector field on $P$ generated by the action of $S^1$, parameterized by $\mathbb{R}/\mathbb{Z}$, on $P$.
One can show that there exists a contact form $\alpha$ on $P$ such that $\pi^*\omega=d\alpha$ and its Reeb vector field coincides with $X$.
Hence, according to Theorem \ref{thm:split} we can construct the homomorphism $\Theta$ as defined above.
Given an Hamiltonian $H$ on $\BM$, we can define an isotopy $(\Theta(H))_t$ on $P$ as constructed above.
Note that since $P$ is closed, $(\Theta(H))_t$ is uniformly continuous on $P$.
Let $\widehat{(\Theta(H))_t}:S^1\PD\rightarrow\ S^1\PD$ be a lift of $(\Theta(H))_t$ from $P$ to $S^1\PD$.
Thus, for every $v\in S^1\PD$ we can define a curve in $S^1$, $\gamma^{(H,v)}:[0,1]\rightarrow S^1$, by
\[ \gamma^{(H,v)}(t):=p_{\infty}(\widehat{(\Theta(H))_t}(v)) .\]
Parameterize $S^1$ by $\real / {\mathbb Z}$ and let $\widetilde{\gamma^{(H,v)}}$ be a lift to $\real$ of $\gamma^{(H,v)}$.
Define
\[ Rot(H,v):= \widetilde{\gamma^{(H,v)}}(1) - \widetilde{\gamma^{(H,v)}}(0) .\]
Note that $(\Theta(-))_t$ is continuous with respect to the $C^2$-topology, hence so is $Rot(-,v)$ as a composition of continuous maps.
Thereby, we can find $\delta>0$, such that if $H'\in C^{\infty}$ and $\| H-H'\|_{C^2}<\delta$, then for every $v\in S^1\PD$,
\[ |Rot(H,v)-Rot(H',v)| < 1 .\]
Denote by $\widetilde{\pi}$ the projection from $S^1\PD$ to $\PD$.
Now, define for every $\widetilde{x}\in\PD$
\[ \widetilde{angle}(H,\widetilde{x})=-\inf_{\widetilde{\pi}(v)=\widetilde{x}}\lfloor Rot(H,v)\rfloor, \]
where $\lfloor x\rfloor$ is the integer part of $x$.
It is shown in \cite{Py} that if $\widetilde{\pi}(v)=\widetilde{\pi}(w)$ then
\[ |\lfloor Rot(H,v)\rfloor - \lfloor Rot(H,w)\rfloor|\le 2 .\]
Hence, for $v\in S^1\PD$ such that $\widetilde{\pi}(v)=\widetilde{x}$
\[ |\widetilde{angle}(H,\widetilde{x}) - ( -\lfloor Rot(H,v)\rfloor)| \le 2 .\]
Obviously $|\lfloor x\rfloor - x|\le 1$, so altogether we obtain that
\[ |\widetilde{angle}(H,\widetilde{x})-\widetilde{angle}(H',\widetilde{x})| \le \]
\[ |\widetilde{angle}(H,\widetilde{x}) - (-Rot(H,v))|
 +  |Rot(H,v) - Rot(H',v)| + \]
\[ +  |(-Rot(H',v))- \widetilde{angle}(H',\widetilde{x})| \]
\[
\le 3+1+3 = 7.
\]

The function $\widetilde{angle}(H,-)$ is invariant by the action of the fundamental group of $\BM$,
so we can define a measurable bounded function $angle(H,-)$ on $\BM$.
Define
\[ \mu_1(\phi_H):=\int_{\BM}angle(H,-)\omega .\]
Note that
\[ |\mu_1(H)-\mu_1(H')| \le \int_{\BM}|angle(H,-)-angle(H',-)|\omega \le 7\cdot Vol(\BM). \]

We will sum up the result.

\begin{prop} \label{prop:k}
There exists a constant $K$ ($=7\cdot Vol(\BM)$) such that for $H\in C^{\infty}(\BM)$ there exists $\delta>0$
such that for any $H'\in C^{\infty}(M)$ that satisfies $\|H-H'\|_{C^2}<\delta$ we have
\[ |\mu_1(\phi_H)-\mu_1(\phi_{H'})|\le K .\]
\end{prop}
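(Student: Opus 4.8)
The plan is to read the proposition off directly from the construction of $\mu_1$ carried out above, whose only non-formal input is the uniform $C^2$-continuity of the rotation number $Rot(-,v)$.

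First I would fix $H\in C^\infty(\BM)$ and use the continuity of the flow $(\Theta(-))_t$ with respect to the $C^2$-topology, together with the smoothness of $p_{\infty}$, to conclude that $H'\mapsto Rot(H',v)$ is continuous for each $v$. Since the whole construction is equivariant under the deck group of $\PD\to\BM$, the quantity $Rot(H',v)$ depends on $v$ only through its image in the \emph{compact} bundle $P$; hence this continuity is uniform in $v$, and one may choose $\delta>0$ so that $\|H-H'\|_{C^2}<\delta$ forces $|Rot(H,v)-Rot(H',v)|<1$ for every $v\in S^1\PD$ simultaneously.

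Next I would assemble the pointwise estimates already recorded: for $v$ with $\widetilde{\pi}(v)=\widetilde{x}$ one has $|\widetilde{angle}(H,\widetilde{x})-(-\lfloor Rot(H,v)\rfloor)|\le 2$ and $|\lfloor Rot(H,v)\rfloor-Rot(H,v)|\le 1$, hence $|\widetilde{angle}(H,\widetilde{x})-(-Rot(H,v))|\le 3$, and likewise for $H'$. The triangle inequality then gives $|\widetilde{angle}(H,\widetilde{x})-\widetilde{angle}(H',\widetilde{x})|\le 3+1+3=7$ for every $\widetilde{x}\in\PD$. Because $\widetilde{angle}(H,-)$ and $\widetilde{angle}(H',-)$ are both $\pi_1(\BM)$-invariant, they descend to measurable functions on $\BM$ and the bound $|angle(H,-)-angle(H',-)|\le 7$ holds on $\BM$.

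Finally, integrating against $\omega$ yields
\[ |\mu_1(\phi_H)-\mu_1(\phi_{H'})|\le\int_{\BM}|angle(H,-)-angle(H',-)|\,\omega\le 7\cdot Vol(\BM)=K, \]
as required. The one delicate point — and the step I expect to be the real obstacle — is the uniformity of $\delta$ over the \emph{non-compact} $S^1\PD$; this must be extracted from the compactness of $P$ and the equivariance of $\Theta$, rather than from a bare pointwise continuity statement.
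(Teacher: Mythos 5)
Your proposal follows essentially the same route as the paper, which simply records the chain of estimates worked out in the surrounding construction: $C^2$-continuity of $H\mapsto Rot(H,v)$, the comparison $|\widetilde{angle}(H,\widetilde{x})-(-\lfloor Rot(H,v)\rfloor)|\le 2$ quoted from \cite{Py}, the resulting pointwise bound of $7$ on $|\widetilde{angle}(H,\widetilde{x})-\widetilde{angle}(H',\widetilde{x})|$, and integration over $\BM$.

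One caveat concerning the step you yourself flag as delicate. Your justification for uniformity of $\delta$ rests on the claim that $Rot(H,v)$ depends on $v$ only through its image in $P$. That is not true as stated: the projection $p_\infty$ intertwines the deck action on $S^1\PD$ with the boundary action on $S^1_\infty$, which is by M\"obius transformations of the circle rather than by rotations, so $\gamma^{(H,g\cdot v)}=g_\infty\circ\gamma^{(H,v)}$ and $Rot(H,g\cdot v)$ generally differs from $Rot(H,v)$ by a nonzero term $\bigl(\widetilde{g}_\infty(b)-b\bigr)-\bigl(\widetilde{g}_\infty(a)-a\bigr)$. What does descend, and what the argument actually needs, is the \emph{condition} $|Rot(H,v)-Rot(H',v)|<1$: choosing lifts $\widetilde{\gamma^{(H,v)}}$, $\widetilde{\gamma^{(H',v)}}$ with the same initial value and using that $\widetilde{g}_\infty$ is a monotone degree-one lift of a circle homeomorphism, $|b_H-b_{H'}|<1$ forces $|\widetilde{g}_\infty(b_H)-\widetilde{g}_\infty(b_{H'})|<1$, so the bound propagates along deck orbits. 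With that correction your appeal to compactness of $P$ does give the required uniform $\delta$, and the rest of the proof matches the paper exactly (which, for its part, asserts the uniformity without comment).
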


\begin{defn}
For $m\in {\mathbb N}$ and $\phi_H\in \mbox{Ham}(\BM,\omega)$ define
\[ \mu_m(\phi_H):=\frac{1}{m}\mu_1(\phi_H^m) .\]
\end{defn}

\begin{prop}\label{prop:qmbound}
For $H\in C^\infty(\BM)$ and $m\in {\mathbb{N}}$, there exists $\delta>0$ such that for $H'\in C^{\infty}(M)$ that satisfies $\|H-H'\|_{C^2}<\delta$
we have
\[ |\mu_m(\phi_H)-\mu_m(\phi_{H'})|\le \frac{K}{m} .\]
\end{prop}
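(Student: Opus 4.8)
The plan is to reduce everything to Proposition \ref{prop:k} applied to the rescaled Hamiltonian $mH$, using the fact that iterating a time-independent Hamiltonian flow is the same as multiplying the Hamiltonian by an integer.

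First I would record the elementary observation that if $H\in C^\infty(\BM)$ is time-independent, then $\phi_H^m=\phi_{mH}$. Indeed, the flow $f_t$ generated by $H$ satisfies the one-parameter group law $f_{t+s}=f_t\circ f_s$, so $\phi_H^m=f_m$; on the other hand the flow of the time-independent Hamiltonian $mH$ is $t\mapsto f_{mt}$, whose time-one map is again $f_m$. Hence $\mu_m(\phi_H)=\tfrac1m\mu_1(\phi_H^m)=\tfrac1m\mu_1(\phi_{mH})$, and likewise $\mu_m(\phi_{H'})=\tfrac1m\mu_1(\phi_{mH'})$.

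Next I would apply Proposition \ref{prop:k} to the function $mH\in C^\infty(\BM)$: there is a $\delta'>0$ such that every $G\in C^\infty(\BM)$ with $\|mH-G\|_{C^2}<\delta'$ satisfies $|\mu_1(\phi_{mH})-\mu_1(\phi_G)|\le K$. Set $\delta:=\delta'/m$. Then for any $H'\in C^\infty(\BM)$ with $\|H-H'\|_{C^2}<\delta$ one has $\|mH-mH'\|_{C^2}=m\|H-H'\|_{C^2}<\delta'$, so taking $G=mH'$ gives $|\mu_1(\phi_{mH})-\mu_1(\phi_{mH'})|\le K$. Dividing by $m$ and invoking the identities from the previous step yields $|\mu_m(\phi_H)-\mu_m(\phi_{H'})|=\tfrac1m|\mu_1(\phi_{mH})-\mu_1(\phi_{mH'})|\le\tfrac{K}{m}$, as required.

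There is no serious obstacle here; the only two points needing care are the time-rescaling identity $\phi_H^m=\phi_{mH}$, which holds precisely because $H$ is time-independent (this is why the statement is phrased for $C^\infty(\BM)$ rather than for time-dependent Hamiltonians), and the homogeneity of the $C^2$-norm under scalar multiplication, which is exactly what upgrades a bound of the form $|\mu_1-\mu_1|\le K$ into a bound of the form $|\mu_m-\mu_m|\le K/m$. One should also note that the normalization $H\mapsto H-\tfrac{1}{Vol(\BM)}\int_{\BM}H\omega$ built into the definition of $\mu_1$ causes no trouble, since it alters neither $\phi_{mH}$ nor the value of $\mu_1$.
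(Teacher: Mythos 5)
Your proof is correct and follows essentially the same route as the paper: apply Proposition \ref{prop:k} to the rescaled Hamiltonian $mH$, set $\delta = \delta'/m$ using homogeneity of the $C^2$-norm, invoke $\phi_H^m=\phi_{mH}$, and divide by $m$. The extra remarks you include (the one-parameter group argument for the rescaling identity and the observation that normalization is harmless) are sound elaborations but do not change the argument.
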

\begin{proof}
Let $H\in C^\infty(\BM)$ and $m\in {\mathbb N}$. Using Proposition \ref{prop:k} for the function $mH$, there exists $\delta'>0$ such that
for every $G\in C^{\infty}(M)$ that satisfies $\|G-mH\|_{C^2}<\delta'$ we have  $|\mu_1(\phi_G)-\mu_1(\phi_{mH})|\le K$.
Choose $\delta:=\frac{\delta'}{m}$ so for $H'\in C^{\infty}(M)$ such that $\|H'-H\|_{C^2}<\delta$ we have $\|mH'-mH\|_{C^2}<\delta'$
which implies $|\mu_1(\phi_{mH'})-\mu_1(\phi_{mH})|\le K$.
With the observation that $\phi_{mH}=\phi_H^m$ we have $|\mu_1(\phi_{H'}^m)-\mu_1(\phi_{H}^m)|\le K$.
Dividing by $m$ we obtain
\[ |\mu_m(\phi_{H'})-\mu_m(\phi_H)|<\frac{K}{m},\]
as required.
\end{proof}

\begin{prop} \label{prop:qmuniform}
Let $\mu_1:G\rightarrow\real$ be a quasi-morphism on a group $G$ with defect $C>0$.
Define for $m\in {\mathbb N}$ and $x\in G$, $\mu_m(x):=\frac{1}{m}\mu_1(x^m)$.
Let $\mu_{\infty}(x):=\lim_{n\rightarrow\infty}\mu_m(x)$ be the homogenization of $\mu_1$.
Then for every $x\in G$ and $m\in {\mathbb N}$
\[ |\mu_{\infty}(x)-\mu_m(x)|\le \frac{C}{m} .\]
\end{prop}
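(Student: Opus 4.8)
The plan is to compare $\mu_m(x)$ with $\mu_{mk}(x)$ for an arbitrary $k\in{\mathbb N}$ and then let $k\to\infty$, using that $\mu_{\infty}(x)$ is by definition the limit of the sequence $\mu_n(x)$.

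First I would record the elementary consequence of the defect inequality: for any $y\in G$ and any $k\in{\mathbb N}$,
\[ |\mu_1(y^k)-k\mu_1(y)|\le (k-1)C .\]
This is proved by induction on $k$. The case $k=1$ is trivial. For the inductive step, write
\[ \mu_1(y^{k+1})-(k+1)\mu_1(y) = \bigl(\mu_1(y^{k+1})-\mu_1(y^k)-\mu_1(y)\bigr) + \bigl(\mu_1(y^k)-k\mu_1(y)\bigr), \]
and bound the first term by $C$ (the defect of $\mu_1$) and the second by $(k-1)C$ (the inductive hypothesis), giving the bound $kC$.

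Next, I apply this estimate with $y=x^m$. Since $\mu_{mk}(x)=\frac{1}{mk}\mu_1\bigl((x^m)^k\bigr)$ and $\mu_m(x)=\frac{1}{m}\mu_1(x^m)$, we get for every $k\in{\mathbb N}$
\[ \bigl|\mu_{mk}(x)-\mu_m(x)\bigr| = \frac{1}{mk}\bigl|\mu_1\bigl((x^m)^k\bigr)-k\mu_1(x^m)\bigr| \le \frac{(k-1)C}{mk} < \frac{C}{m} . \]
Finally, since $\mu_{\infty}(x)=\lim_{n\to\infty}\mu_n(x)$, the subsequence $\mu_{mk}(x)$ also converges to $\mu_{\infty}(x)$ as $k\to\infty$; letting $k\to\infty$ in the inequality above yields $|\mu_{\infty}(x)-\mu_m(x)|\le \frac{C}{m}$, as required. (Incidentally, the displayed inequality also shows the defining sequence $\{\mu_n(x)\}$ is Cauchy, so the homogenization limit exists.) The only step needing any care is the induction bound; there is no serious obstacle, as this is a standard lemma in the theory of quasi-morphisms.
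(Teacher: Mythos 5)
Your proposal is correct and follows essentially the same route as the paper: establish by induction the bound $|\mu_1(x^{mp})-p\,\mu_1(x^m)|\le (p-1)C$ (the paper states it with the slightly weaker constant $pC$), divide by $mp$, and let $p\to\infty$ to compare $\mu_m(x)$ with $\mu_\infty(x)$. The only differences are cosmetic — your constant in the induction is a bit tighter and you phrase the lemma in terms of a general element $y$ before specializing to $y=x^m$ — so there is nothing to flag.
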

\begin{proof}
Let $x\in G$, $m,p\in {\mathbb N}$. By the quasi-morphism property we get
\[ |\mu_1(x^{mp})-\mu_1(x^m)-\mu_1(x^{m(p-1)})|<C .\]
Using induction on $p$ we obtain
\[ |\mu_1(x^{mp})-p\cdot \mu_1(x^m)|<pC .\]
Divide by $mp$
\[ | \frac{\mu_1(x^{mp})}{mp} - \frac{\mu_1(x^m)}{m}|< \frac{C}{m} .\]
Equivalently,
\[ |\mu_{mp}(x)-\mu_m(x)|< \frac{C}{m} .\]
As $p$ tends to infinity we get
\[ |\mu_{\infty}(x)-\mu_m(x)|\le \frac{C}{m} ,\]
as required.
\end{proof}

\begin{thm} \label{thm:cont}
The functional $\xi:C^{\infty}(\BM)\rightarrow\real$ (see Definition \ref{def:QS}) is continuous with respect to the $C^2$-topology.
\end{thm}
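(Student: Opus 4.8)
The plan is to reduce the $C^2$-continuity of $\xi$ to the $C^2$-continuity of the map $H\mapsto\mu(\phi_H)$ on $C^\infty(\BM)$. Indeed, from Definition \ref{def:QS} we have $\xi(F)=\frac{1}{Vol(\BM)}\bigl(\int_{\BM}F\omega-\mu(\phi_F)\bigr)$, and the functional $F\mapsto\int_{\BM}F\omega$ is continuous already in the $C^0$-topology (hence in the $C^2$-topology, since $C^2$-convergence implies $C^0$-convergence), so the whole question comes down to controlling the term $\mu(\phi_F)$. Here $\mu$ is Py's homogeneous quasi-morphism from Theorem \ref{thm:pi1}, which we identify with the homogenization $\mu_\infty$ of the quasi-morphism $\mu_1$ built in the Construction of the quasi-morphism above; let $C$ denote the (finite) defect of $\mu_1$.

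The core of the argument is a $3\epsilon$-estimate assembled from the three propositions just proved. Fix $H\in C^\infty(\BM)$ and $\epsilon>0$. First I would pick $m\in\mathbb N$ large enough that $\frac{2C+K}{m}<\epsilon$, where $K=7\cdot Vol(\BM)$ is the constant of Proposition \ref{prop:k}. With this $m$ now fixed, Proposition \ref{prop:qmbound} provides a $\delta>0$ (depending on $H$ and on $m$) such that $\|H-H'\|_{C^2}<\delta$ implies $|\mu_m(\phi_H)-\mu_m(\phi_{H'})|\le\frac{K}{m}$. For any such $H'$, apply Proposition \ref{prop:qmuniform} twice, to $x=\phi_H$ and to $x=\phi_{H'}$, to get $|\mu(\phi_H)-\mu_m(\phi_H)|\le\frac{C}{m}$ and $|\mu(\phi_{H'})-\mu_m(\phi_{H'})|\le\frac{C}{m}$. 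The triangle inequality then yields
\[
|\mu(\phi_H)-\mu(\phi_{H'})|\le\frac{C}{m}+\frac{K}{m}+\frac{C}{m}=\frac{2C+K}{m}<\epsilon ,
\]
so $H\mapsto\mu(\phi_H)$ is continuous at $H$; as $H$ was arbitrary, it is continuous on all of $C^\infty(\BM)$ in the $C^2$-topology. Combining this with the $C^0$-continuity of $F\mapsto\frac{1}{Vol(\BM)}\int_{\BM}F\omega$ shows that $\xi$, being their difference, is continuous in the $C^2$-topology, which is the assertion of the theorem.

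I expect the main obstacle to be only bookkeeping rather than substance: one must keep the quantifiers in the right order (choose $m$ first from Propositions \ref{prop:k} and \ref{prop:qmuniform}, then let $\delta$ depend on both $H$ and $m$ via Proposition \ref{prop:qmbound}), and one must record that $\mu_1$ genuinely is a quasi-morphism with finite defect so that Proposition \ref{prop:qmuniform} applies — both of which follow directly from Py's construction. All the genuine analytic difficulty — the $C^2$-continuity of the lifted flow $\Theta(-)$, hence of $Rot(-,v)$, hence of $\mu_1$ up to a uniformly bounded error — has already been absorbed into Propositions \ref{prop:k} and \ref{prop:qmbound}, so nothing further of that kind is needed here.
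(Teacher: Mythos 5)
Your proof is correct and follows essentially the same route as the paper: fix $H$ and $\epsilon$, choose $m$ large enough (the paper takes $N>\max(4C/\epsilon,\,2K/\epsilon)$, which is the same estimate), invoke Proposition \ref{prop:qmbound} for the $\delta$, apply Proposition \ref{prop:qmuniform} twice, and close with the triangle inequality. The only (cosmetic) difference is that you spell out the reduction from $\xi$ to $\mu$ via the $C^0$-continuity of $F\mapsto\int_\BM F\omega$, which the paper compresses into a final remark.
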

\begin{proof}
Let $H\in C^{\infty}(\BM)$ and $\epsilon>0$.
Let $C>0$ be the defect of $\mu_1$ and $K>0$ the constant defined in Proposition \ref{prop:k}.
Choose $N\in {\mathbb N}$ such that $N>max(\frac{4C}{\epsilon},\frac{2K}{\epsilon})$.
By Proposition \ref{prop:qmbound} there exists $\delta>0$ such that for $H'$ that satisfies $\|H-H'\|_{C^{\infty}}<\delta$ we get
\[ |\mu_N(\phi_H)-\mu_N(\phi_{H'})|<\frac{K}{N} .\]
According to Proposition \ref{prop:qmuniform} we have
\[ |\mu_\infty(\phi_H)-\mu_N(\phi_H)|<\frac{C}{N} \]
and
\[ |\mu_\infty(\phi_{H'})-\mu_N(\phi_{H'})|<\frac{C}{N} .\]
Thus
\[ | \mu_\infty(\phi_H)-\mu_\infty(\phi_{H'})|\le \]
\[ |\mu_\infty(\phi_H)-\mu_N(\phi_H)| + |\mu_N(\phi_H)-\mu_N(\phi_{H'})| + |\mu_\infty(\phi_{H'})-\mu_N(\phi_{H'})| < \]
\[< \frac{2C}{N}+\frac{K}{N} < \epsilon .\]

Py's quasi-morphism $\mu$ is defined to be $\mu_\infty$, so the result follows from Definition \ref{def:QS} of $\xi.$
\end{proof}

\textdf{Acknowledgements.}
I would like to express my sincere thanks to my thesis advisor,
Professor Leonid Polterovich, for his dedicated and patient guiding, and for the time he spent sharing with me his knowledge and expertise.
I would like to thank the Israel Science Foundation grant \# 11/03 which partially supported this work.

\newpage


\begin{thebibliography}{aa}
\bibitem{A} J.F. Aarnes, \textit{Physical states on a $C^{*}$-algebra.}
Acta Math. 122, 161-172 (1969).
\bibitem{A1} J.F. Aarnes, \textit{Quasi-states on $C^*$-algebras.}
Trans. Amer. Math. Soc. 149, 601-625 (1970).
\bibitem{A2} J.F. Aarnes, \textit{Quasi-states and quasi-measures.}
Adv. Math 86, 41-67 (1991).
\bibitem{Banyaga} A. Banyaga, \textit{The group of diffeomorphisms preserving a regular contact form.}
Topology and algebra (Proc. Colloq., Eidgenoss. Tech. Hochsch., Zurich, 1977), volume 26 of Monographs. Enseign. Math., pages 47-53. Univ. Gen\`eve, 1978.
\bibitem{B} A. Banyaga, \textit{Sur la structure du groupe des diff\'emorphismes qui pr\'eservent une forme symplectique.}
Comment. Math. Helv. 53(no.2) :174-227, 1978.
\bibitem{Biran} P. Biran, M. Entov, L. Polterovich, \textit{Calabi quasimorphisms for the symplectic ball.}
Commun. Contemp. Math., 6 :793-802, 2004.
\bibitem{Buser} J. Buser, \textit{Geometry and Spectra of Compact Riemann Surfaces.}
Progress in Mathematics, Springer; 1 edition, 1992.

\bibitem{CB} A. J. Casson, S. A. Bleiler, \textit{Automorphisms of Surfaces after Nielsen and Thurston.}
London Mathematical Society Student Texts, 1988.
\bibitem{Cole}
K. Cole-McLaughlin, H. Edelsbrunner, J. Harer, V. Natarajan, and V. Pascucci \textit{Loops in Reeb Graphs of 2-Manifolds.} Discrete Comput. Geom. 32:231-244 (2004).
\bibitem{EP} M. Entov, L. Polterovich, \textit{Calabi quasimorphisms and quantom homology.}
Int. Math. Res. Not., (no.30) :1635-1676, 2003.
\bibitem{EP1} M. Entov, L. Polterovich, \textit{Quasi-states and symplectic intersections.}
Comment. Math. Helv. 81 :75-99, 2006.
\bibitem{Hall} Hall. P, \textit{On Representatives of Subsets.} J. London Math. Soc. 10, 26-30, 1935.
\bibitem{K} D. Kotschick, What is ... a quasi-morphism? \textit{Notices Amer. Math. Soc.} 51, 208-209, 2004.
\bibitem{MS} D. McDuff, D. Salamon, \textit{Introduction to symplectic topology.}
Oxford Mathematical Monographs, The Clarendon Press, Oxford University Press, New York, second edition, 1998.
\bibitem{Milnor} Milnor, J. W. \textit{Morse Theory.}
Princeton, NJ: Princeton University Press, 1963.
\bibitem{Ost} Y. Ostrover, \textit{Calabi quasi-morphisms for some non-monotone symplectic manifolds.}
Algebraic \& Geometric Topology 6 (2006) 405-434.
\bibitem{P} L. Polterovich, \textit{Geometry of the Group of Symplectic Diffeomorphisms.}
Lectures in Mathematics ETH Zurich, Birkhauser Verlag, Basel, 2001.
\bibitem{Py} P. Py, \textit{Quasi-morphismes et invariant de Calabi.}
Ann. Sci. \'Ecole Norm. Sup. 39, no.1 ,177-195 (2006).
\bibitem{Py2} P. Py, \textit{Quasi-morphismes de Calabi et graphe de Reeb sur le tore.}
C. R. Acad. Sci. Paris, Ser. I vol.343 323-328 (2006).
\bibitem{Reeb} G. Reeb, \textit{Sur les points singuliers d'une forme de Pfaff compl\`etement int\'egrable ou d'une fonction num\'erique.}
C.R. Acad. Sci. Paris, 222 :847-849, 1946.

\end{thebibliography}
\end{document}